\renewcommand\P{\ensuremath{\mathcal{P}}}
\newcommand\G{\ensuremath{\mathcal{G}}}
\newcommand\R{\ensuremath{\mathcal{R}}}
\newcommand{\class}[1]{\text{\sc{#1}}}
\newcommand{\free}[1]{\ensuremath{#1}\class{-free}}
\renewcommand\Pr{\ensuremath{\mathbf{P}}}
\theoremstyle{plain}
\newtheorem{thm}{Theorem}
\newtheorem{lemma}[thm]{Lemma}
\newtheorem{prop}[thm]{Proposition}
\theoremstyle{definition}
\newtheorem{definition}{Definition}
\newtheorem{claim}{Claim}
\newcommand\resetClaimCounter{\setcounter{claim}{0}}
\newcommand{\sst}[2]{\left\{\;#1 \;\middle|\; #2 \;\right\}}
\newcommand{\T}{\mathcal{T}}
\title{Chordal graphs are easily testable}
\author{Rémi de Joannis de Verclos
  \thanks{Department of Mathematics, Radboud University Nijmegen, Netherlands. 
    Email: \protect\href{mailto:r.deverclos@math.ru.nl}{\protect\nolinkurl{r.deverclos@math.ru.nl}}. Supported by a Vidi grant (639.032.614) of the Netherlands Organisation for Scientific Research (NWO).}
}
\date{\today}
\begin{document}

\maketitle

\begin{abstract}
  We prove that the class of chordal graphs is easily testable in the following sense.
  There exists a constant~$c>0$ such that,
  if adding/removing at most~$\epsilon n^2$ edges to a graph~$G$
  with~$n$ vertices
  does not make it chordal, then a set of~$(1/\epsilon)^c$ vertices of~$G$
  chosen uniformly at random induces a graph that is not chordal
  with probability at least~$1/2$.
  This answers a question of Gishboliner and Shapira.
\end{abstract}


\section*{Introduction}

A graph~$G$ on~$n$ vertices is \emph{$\epsilon$-far}
from satisfying a property~$\P$ if one has to
add or delete at least $\epsilon n^2$ edges to~$G$
to obtain a graph satisfying~$\P$.
A hereditary class~$\P$ of graphs is \emph{testable}
if for every fixed~$\epsilon>0$ there is a size~$m_\epsilon$
such that the following holds.
If~$G$ is $\epsilon$-far from~$\P$ then
a set~$X\subseteq V(G)$ sampled uniformly at random
among all subsets of~$V(G)$
of size~$m_\epsilon$ induces a graph $G[X]$ that is not in~$\P$
with probability at least~$\frac{1}{2}$.
The property~$\P$ is \emph{easily testable} if moreover~$m_\epsilon$ is
a polynomial function of~$\epsilon^{-1}$.
Otherwise, $\P$ is \emph{hard to test}.

A fundamental result on one-sided testability of graph properties
of Alon and Shapira states that
every hereditary property has a one-sided tester~\cite{AlonS08}.
The proof of this result uses a strengthening
of Szemer\'edi's regularity lemma and gives a query complexity
that is a tower of towers of exponentials of size polynomial in $1/\epsilon$.
This bound was improved by Conlon and Fox~\cite{ConlonF11}
to a single tower of exponentials.

Alon and Shapira also showed that
the class~\free{H} of graphs without induced copy of~$H$
is hard to test when~$H$
is different from $P_2$, $P_3$, $P_4$, $C_4$
and different from the complement of one of these graphs~\cite{AlonS06}.
Moreover, this class is known to be easily testable when~$H\in\{P_2, P_3, P_4\}$,
and thus when~$H$ is the complement of one of these graphs
\cite{AlonS06,AlonF15}.

As a consequence, the graphs~$H$ for which the class~\free{H}
is easily testable are
known, except when $H$ is~$C_4$ or its complement~$\bar{C_4}=2K_2$.
This last question, whether \free{C_4} is easily testable, remains open.
Recently, Gishboliner and Shapira~\cite{GishS18} gave progress
on that question
by showing that every graph that is $\epsilon$-far from being $C_4$-free
contains at least~$n^4/2^{(1/\epsilon)^c}$ induced
copies of~$C_4$ for some constant~$c$, which implies that
$\free{C_4}$ can be tested with query complexity~$2^{(1/\epsilon)^c}$.

The class of chordal graphs is an important and natural subclass of~$\free{C_4}$.
A graph is \emph{chordal} if it contains no induced~$C_k$ for every~$k\geq4$.
Gishboliner and Shapira proved that the class of chordal graph
is testable with query complexity~$2^{(1/\epsilon)^c}$ and
they conjectured that this bound can be further improved
to a polynomial in~$1/\epsilon$~\cite{GishS18}.
In this paper, we confirm this conjecture.
\begin{thm}\label{thm:main}
  The class of chordal graph is testable with query complexity~$O(\epsilon^{-37})$.
\end{thm}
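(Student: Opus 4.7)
The plan is to analyse the canonical tester: sample a uniform random set $X\subseteq V(G)$ of size $m=O(\epsilon^{-37})$ and accept iff $G[X]$ is chordal. Chordality of $G[X]$ can be decided in polynomial time, and since the class of chordal graphs is hereditary the tester has perfect completeness; the whole task is soundness, namely that a graph $G$ which is $\epsilon$-far from chordal induces a non-chordal subgraph on $X$ with probability at least $1/2$. I would reduce this to a polynomial hole-counting lemma: there exist absolute constants $c>0$ and $K\in\N$ such that whenever $G$ is $\epsilon$-far from chordal, $G$ contains at least $\epsilon^{c}n^{k}$ induced copies of $C_{k}$ for some $4\leq k\leq K$. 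A standard second-moment calculation on the number of induced $C_k$'s inside $G[X]$ then shows that a uniform $m$-subset with $m=O(\epsilon^{-c/4})$ contains such an induced cycle with probability at least $1/2$, and tracking constants through the argument should reproduce the exponent $37$ (the worst case is $k=4$, where the sampling cost is maximised).

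The counting lemma I would prove by contraposition. Assume $G$ has fewer than $\delta n^{k}$ induced copies of $C_{k}$ for every $4\leq k\leq K$, with $\delta=\epsilon^{c}$ chosen suitably small, and construct a chordal $G'$ at edit distance less than $\epsilon n^{2}$ from $G$ in two stages. The first is a \emph{short-hole cleanup}: greedily delete edges lying in many induced short cycles; a standard hitting-set bound on the hypergraph of induced cycles of length at most $K$ shows that $O(K\delta n^{2})$ deletions suffice to destroy all holes of length $\leq K$. The second stage is to prove that the resulting graph $G_{1}$, which has no induced cycle of length in $[4,K]$, is within $o(\epsilon n^{2})$ further edits of some chordal graph. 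Concretely, the target is a structural statement of the form: every graph with no induced hole of length at most $K$ is within $O(n^{2}/K^{\alpha})$ edits of being chordal for some absolute $\alpha>0$. Fixing $K$ to be a sufficiently large absolute constant then balances the two error terms strictly below $\epsilon n^{2}$.

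The main obstacle is the second stage, controlling graphs whose only holes are long. My attack is the following rigidity observation: if $G_{1}$ has no short induced cycle but contains a long hole $H$, then the neighbourhood of $H$ and the interactions of $H$ with other long holes are severely restricted, because any additional edge producing a chord, short bridge, or shortcut would create a hole of length $\leq K$, contradicting the assumption. This should give either a bound on the number of vertices participating in long holes, or a sparse hitting set of edges destroying all of them. A key auxiliary claim is that two induced holes of length $>K$ in a short-cycle-free graph either share at most one vertex or meet in a very path-like subgraph, a rigidity in the spirit of forbidden-configuration arguments used by Gishboliner and Shapira for $\free{C_4}$. Balancing the two phases, together with the $\epsilon^{-c/4}$ sampling overhead for detecting the smallest induced cycle, is what I expect to produce the final polynomial exponent $37$.
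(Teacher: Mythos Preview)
Your outline is a genuinely different strategy from the paper's, and it has real gaps.

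\textbf{The hitting-set cleanup does not work.} Your first stage claims that if $G$ has fewer than $\delta n^{k}$ induced $C_{k}$ for every $4\le k\le K$, then a ``standard hitting-set bound'' lets you delete $O(K\delta n^{2})$ edges and kill all short holes. For induced subgraphs there is no such standard bound: deleting an edge can \emph{create} induced cycles. Concretely, take $K_{n}$ minus a perfect matching. It has only $\Theta(n^{2})$ induced $C_{4}$'s (so $\delta\approx n^{-2}$), yet deleting a single edge $ac$ produces $\Theta(n)$ new induced $C_{4}$'s through the new non-edge $ac$ and any matching pair. This creation phenomenon is exactly why the induced removal lemma is qualitatively harder than the non-induced one, and why Alon and Shapira proved that \class{$C_{k}$-free} is hard to test for every $k\ge 5$. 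A polynomial cleanup of the family $\{C_{4},\dots,C_{K}\}$ would in particular give a polynomial removal lemma for each $C_{k}$ in that range, contradicting those lower bounds. So this step cannot go through as written.

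\textbf{The parameter $K$ is inconsistent.} You say $K$ is ``a sufficiently large absolute constant'' and also that you will choose $K$ so that $n^{2}/K^{\alpha}<\epsilon n^{2}$. These are incompatible: the second forces $K\gtrsim\epsilon^{-1/\alpha}$, so $K$ must grow with $1/\epsilon$. Once $K$ depends on $\epsilon$, the hyperedges in your cleanup have size $\mathrm{poly}(1/\epsilon)$ and the counting lemma must produce $\epsilon^{c}n^{k}$ copies of $C_{k}$ for some $k$ up to $\mathrm{poly}(1/\epsilon)$; the whole accounting changes and the sketched exponent analysis no longer applies.

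\textbf{The long-hole structural claim is not established.} The statement ``every graph with no induced hole of length $\le K$ is within $O(n^{2}/K^{\alpha})$ edits of chordal'' is doing all the real work, and your rigidity sketch does not prove it. Two long holes in a short-hole-free graph can still interact in many ways, and ``either share one vertex or meet in a path-like subgraph'' is neither obviously true nor obviously sufficient.

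\textbf{How the paper actually proceeds.} The paper does not count holes at all. It uses the representation of chordal graphs as intersection graphs of subtrees of a tree. One first peels off the vertices whose neighbourhoods are nearly cliques (Lemma~\ref{lem:nearly simplicial}), reducing to a set $Y$ on which $G$ is still far from chordal. A small sample $S$ is drawn; for each of the at most $(3s)^{2s^{2}}$ minimal tree representations of $G[S]$ (Lemma~\ref{lem:chordal repr}), Lemma~\ref{lem:YS} pins every vertex of $Y$ to a bounded set of points of the tree. Testing whether the rest of the graph admits a representation consistent with these pins is encoded as a \emph{set colouring problem} in the sense of Section~\ref{sec:coloring}, which is easily testable by Theorem~\ref{thm:coloring}; the local structure along each edge of the tree is controlled by the $M_{2}$-free analysis of Section~\ref{sec:M2-free}. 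A union bound over representations gives the final $O(\epsilon^{-37})$. None of this resembles a removal/counting argument, and the polynomial bound comes from the coloring and $M_{2}$-free testers, not from hole densities.
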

In particular, the class of chordal graph is easily testable.

\subsubsection*{Structure of the paper}

Theorem~\ref{thm:main} is proved in Section~\ref{sec:main proof}.
The main ingredient of the proof is a generalization of the testability of
the $k$-coloring problem (Theorem~\ref{thm:coloring})
which is described in Section~\ref{sec:coloring}.
This result is later used to deal with the "global structure" of chordal graphs.
In Section~\ref{sec:M2-free}, we show various simple properties
about what we call $M_2$-free graphs that are useful to deal
with the local structure of chordal graphs.
In Section~\ref{sec:nearly simplicial},
we show a technical lemma (Lemma~\ref{lem:nearly simplicial}) to deal with
vertices whose neighborhood is nearly a clique.
In Section~\ref{sec:chordal representations}, we show the properties we need
regarding the set of representations of a chordal graph
as an intersection graph of subtrees of a a tree.
The main step toward Theorem~\ref{thm:main} is Lemma~\ref{lem:test pinned},
that shows that it is easy to test if a graph is the intersection graph of a family
of subtrees of a fixed tree with some extra constraints.
The proof of Lemma~\ref{lem:test pinned} is quite involved. It relies on
Theorem~\ref{thm:coloring} as well as lemmas from
Sections~\ref{sec:nearly simplicial} and~\ref{sec:chordal representations}.
The proof of Theorem~\ref{thm:main} in Section~\ref{sec:main proof} is then
essentially an
application of Lemma~\ref{lem:test pinned} and the union bound.

\section{A generalization
  of the coloring problem}
\label{sec:coloring}

The class of $k$-colorable graphs has been proved to be testable
with query complexity~$\frac{k^2\ln k}{\epsilon^2}$
by Goldreich, Goldwasser and Ron~\cite{goldreichgr98}.
This bound was later improved to~$36k\ln k\epsilon^{-2}$
by Alon and Krivelevich~\cite{AlonK02}.
The argument is actually very generic and applies to other graph classes.
Nakar and Ron recently extended this result to a wider family of graph partition
problems that for instance includes split graphs~\cite{NakarR18}.
We give a further generalisation of this result in Theorem~\ref{thm:coloring}.
This theorem is one of the main ingredients of the proof of Theorem~\ref{thm:main}.
It may be of independent interest.

We start by describing the type of problems we study.
In short, colors are subsets of~$[k]=\{1,\dots,k\}$, each vertex has its private list
of possible colors,
and conflicts between colors are expressed by some set inclusion conditions.
\begin{definition}
  Given a set of vertices~$V$ of size~$n$ and a natural number~$k$,
  a \emph{set coloring problem} is given by
  \begin{enumerate}
  \item for every~$v\in V$, a non-empty
    list of \emph{colors}~$L_v \subseteq 2^{[k]}$;
  \item and for every~$(u,v)\in V^2$ with~$u\neq v$,
    two functions~$m_{uv}:L_u \to 2^{[k]}$
    and~$M_{uv}:L_u \to 2^{[k]}$ with
    $m_{uv}(c)\subseteq M_{uv}(c)$ for every~$c\in L_u$.
  \end{enumerate}
\end{definition}
A~\emph{coloring} of~$V$ is a function~$\phi:V\to2^{[k]}$
that assigns to each~$v\in V$ a color~$\phi(v)\in L_v$.
This coloring is~\emph{proper} if for every~$(u,v)\in V^2$ with~$u\neq v$,
\begin{equation}\label{eq:valid pair}
  m_{uv}(\phi(u))\subseteq \phi(v)\subseteq M_{uv}(\phi(u)).
\end{equation}
On the contrary,
a pair~$uv \in\binom{V}{2}$
is \emph{conflicting} if~\eqref{eq:valid pair} is not satisfied
for one of~$(u,v)$ and~$(v,u)$.

We can now state the main result of this section.
\begin{thm}[Testability of set coloring problems]\label{thm:coloring}
  For every~$\epsilon>0$,
  if every coloring of~$V$ has
  at least $\epsilon n^2$ conflicting pairs,
  then sampling $X\subseteq V$ of
  size
  \[
    m_{\ref{thm:coloring}}(\epsilon,k)=
    36k\ln\left(\max_{u\in V}|L_u|\right)\epsilon^{-2}\leq 36k^2\epsilon^{-2}
  \]
  induces a problem on~$X$
  without proper coloring with probability $1/2$.
\end{thm}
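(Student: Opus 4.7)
The plan is to adapt the sampling-and-halving argument of Alon and Krivelevich~\cite{AlonK02}, which establishes a $36k\ln k\,\epsilon^{-2}$ bound for testing $k$-colorability, to the present setting with set lists. Write $L = \max_u|L_u|$ and split $X$ into $r \approx \ln L$ disjoint batches $X_1,\ldots,X_r$, each a uniform random subset of $V$ of size $s$ with $rs = m_{\ref{thm:coloring}}(\epsilon,k)$. For a partial coloring $\psi$ of $X_{\leq i} := X_1 \cup \cdots \cup X_i$ proper on $X_{\leq i}$ and each $v\in V\setminus X_{\leq i}$, introduce the residual list
\[
L_v(\psi) = \bigl\{\, c \in L_v \;:\; \text{no pair } (u,v) \text{ or } (v,u) \text{ with } u \in X_{\leq i} \text{ is conflicting under } \psi(u) \text{ and } c \,\bigr\}.
\]
Think of the process as exploring a decision tree whose depth-$i$ nodes are the proper partial colorings of $X_{\leq i}$ and whose edges are proper extensions to $X_{i+1}$; each internal node has at most $L^s$ children, so there are at most $L^{rs}$ leaves, one per proper coloring of $X$.

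The core technical ingredient is a halving lemma: for every fixed proper $\psi_i$ at depth $i$ with all residual lists non-empty, with probability at least $1 - (2rL^{rs})^{-1}$ over the random batch $X_{i+1}$, each proper extension $\psi_{i+1}$ of $\psi_i$ either (a) has a conflicting pair inside $X_{\leq i+1}$, (b) makes some new vertex's residual list empty, or (c) drops the potential $\Phi(\psi) := \sum_v \log_2|L_v(\psi)|$ by at least $n/2$. To prove this I would fix a canonical completion $\hat\psi_i : V \to 2^{[k]}$ by choosing $\hat\psi_i(v) \in L_v(\psi_i)$ arbitrarily for $v\notin X_{\leq i}$. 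By assumption $\hat\psi_i$ has at least $\epsilon n^2$ conflicting pairs, and the residual-list condition combined with properness of $\psi_i$ forces all these pairs to lie entirely inside $V\setminus X_{\leq i}$. Splitting by conflict-degree, if the pairs are spread across many low-degree vertices, an elementary pairing argument on $X_{i+1}$ produces a conflicting pair in the batch (case (a)); if instead they concentrate on a few ``conflict hubs'', a random batch hits a constant fraction of the hubs, and each sampled hub eliminates colors from the residual lists of $\Omega(\epsilon n)$ other vertices, which via a Chernoff estimate yields the desired $n/2$-drop (case (c)).

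Iterating the halving lemma over $r \approx \ln L$ rounds and union-bounding across all $\leq L^{rs}$ branches, with probability at least $1/2$ every surviving branch must have undergone case (c) at every step, so its potential has dropped by at least $r\cdot n/2 \geq n\log_2 L$, forcing all residual lists to be singletons and $\psi_r$ to uniquely determine a global extension $\hat\psi$; applying the hypothesis to $\hat\psi$ and a final absorbed pairing estimate on $X_r$ then locates a conflicting pair inside $X$, killing the branch. The main obstacle is the dichotomy in the halving lemma: in the ``concentrated'' regime, sampling a batch of size $s$ rarely contains both endpoints of any single conflict pair, so case (a) almost never triggers, and one must instead argue quantitatively that hitting a constant fraction of the conflict hubs cumulatively kills enough residual colors to force the potential drop. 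Balancing this against the ``spread-out'' regime with a single choice of $s$, and upgrading the Alon–Krivelevich constant $36k$ from lists $L_v \subseteq [k]$ to lists $L_v \subseteq 2^{[k]}$, is the delicate tuning step.
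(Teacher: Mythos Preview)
Your high-level architecture---a decision tree over partial colorings, a potential that decreases along non-terminal branches, and a union bound over the leaves---is the same as the paper's, but the potential you chose is the wrong one and your halving lemma, specifically case~(c), does not go through. The paper does not track $\Phi(\psi)=\sum_v\log_2|L_v(\psi)|$. Instead, for a proper partial coloring $\phi$ on $S$ it sets
\[
m_\phi(v)=\bigcup_{u\in S}m_{uv}(\phi(u)),\qquad M_\phi(v)=\bigcap_{u\in S}M_{uv}(\phi(u)),\qquad E_\phi=\sum_{v\in V}\bigl(|M_\phi(v)|-|m_\phi(v)|\bigr),
\]
which is at most $kn$ because colours are subsets of $[k]$. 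The key structural fact is that every surviving colour $c\in L_\phi(v)$ satisfies $m_\phi(v)\subseteq c\subseteq M_\phi(v)$, so whenever a pair $(u,v)$ conflicts under the greedy completion $\alpha_\phi$, one has $|M_\phi(v)\setminus M_{uv}(\alpha_\phi(u))|+|m_{uv}(\alpha_\phi(u))\setminus m_\phi(v)|\ge 1$. This buys one unit of potential per conflict, hence at least $\epsilon n/2$ vertices are either colourless or \emph{restricting} in the sense that sampling one of them drops $E_\phi$ by at least $\epsilon n/2$ for \emph{every} colour in its residual list. That uniform, deterministic drop bounds the tree depth by $3k/\epsilon$ (vertices are processed one at a time, not in batches) and makes the union bound over the $p^{3k/\epsilon}$ leaves succeed, where $p=\max_v|L_v|$.

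Your potential lacks this one-unit-per-conflict property. Adding a hub $u$ with colour $c$ removes from $L_v(\psi)$ only the colours of $v$ that conflict with $c$; if that is a single colour out of a list of size close to $L$, the drop in $\log_2|L_v(\psi)|$ is of order $1/L$, not $1$. So ``each sampled hub eliminates colours from the residual lists of $\Omega(\epsilon n)$ other vertices'' yields a potential drop of order $\epsilon n/L$, far short of the $n/2$ you need. Moreover, your hubs are defined relative to the fixed completion $\hat\psi_i$, but the adversary picks the extension $\psi_{i+1}$ after the batch is revealed and may assign $u$ a colour different from $\hat\psi_i(u)$, under which $u$ need not eliminate anything at all; the paper avoids this by taking $\delta_\phi(u)=\min_c\delta_\phi^c(u)$, which is adversary-proof. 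Finally, nothing in your sketch explains where the factor $k$ in $36k\ln L\,\epsilon^{-2}$ comes from: it arises precisely because $E_\phi\le kn$, i.e.\ because the set-inclusion form of the constraints confines the slack to a $k$-element ground set. You have treated the problem as a generic list-colouring question and not used that structure anywhere, which is why you cannot close the gap you yourself flag as ``the main obstacle''.
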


Before proving Theorem~\ref{thm:coloring}, let us explain
how $k$-colorability of a graph~$G=(V,E)$ is a particular instance of
the set coloring problem on~$V$.
For every~$v\in V$,
let the list of colors~$L_v$
be the set of singletons~$\{i\}$ with~$i\in[k]$.
For the constraints,
define $m_{uv}(c)=\varnothing$ and~$M_{uv}(c)=[k]\setminus c$
in the case where~$uv$ is an edge,
so that~\eqref{eq:valid pair} is satisfied if and only if
$\phi(u) \neq \phi(v)$.
If~$uv\notin E$,
define $m_{uv}(c)=\varnothing$
and $M_{uv}(c)=[k]$, so that~\eqref{eq:valid pair}
is automatically satisfied for this pair.
It is easy to check that a valid coloring of~$V$ for these constraints
is exactly a function~$\phi:V\to\{\{1\},\dots,\{k\}\}$ satisfying
$\phi(u)\neq\phi(v)$ whenever~$uv\in E$.
Since~$\max_{v\in V}|L_v|=k$,
Theorem~\ref{thm:coloring} then implies the bound of Alon and Krivelevich
for the class of $k$-colorable graphs,
i.e. this property is testable with query complexity~$O(k\ln k\epsilon^{-2})$.
Set coloring problems also generalize other similar graph classes,
such as the class of split graphs.

The proof of Theorem~\ref{thm:coloring} is a direct adaptation
of the proof for the testability of $k$-colorability
of Alon and Krivelevich~\cite[Theorem~3]{AlonK02}.
This proof is postponed to the appendix.


\section{Nearly simplicial vertices }
\label{sec:nearly simplicial}

A vertex of a graph is \emph{simplicial} if its neighborhood is a clique.
It is well known that the class of chordal graphs is stable by addition of simplicial
vertices (chordal graphs are exactly the graphs that can be obtained from
the empty graph by iteratively adding simplicial vertices~\cite{rose70}).
In this section, we give a relaxed version of this property
(Lemma~\ref{lem:nearly simplicial}).

For a vertex~$v$ of a graph~$G$,
let~$p_G(v)$ be the number of non-edges in the neighborhood of~$v$.
Note that~$v$ is simplicial if and only if~$p_G(v)=0$.
Roughly speaking,
we use the value~$p_G(v)$ as a measure of how close to being simplicial~$v$ is.

The purpose of this section is to prove the following property.
\begin{lemma}\label{lem:nearly simplicial}
  Let~$\epsilon > 0$ and~$n \geq\epsilon^{-1}$.
  Let~$G$ be a graph with~$n$ vertices and with a vertex partition~$X\cup Y$
  such that~$G[Y]$ is chordal and~$p_G(v)\leq \epsilon n^2$ for every~$v\in X$.
  Then~$G$ is $6\epsilon^{1/2}$-close from a chordal graph.
\end{lemma}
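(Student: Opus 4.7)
I would construct an explicit chordal graph $G'$ with $G'[Y]=G[Y]$ and bound the distance $|E(G)\triangle E(G')|$. For each $v\in X$, let $K_v\subseteq N_G(v)\cap Y$ be a clique of $G[Y]$ chosen as large as possible, and set $N_{G'}(v)=K_v$. Since each $K_v\subseteq Y$ is a clique of $G'[Y]$, every $v\in X$ is simplicial in $G'$, so $G'$ is chordal: eliminate all of $X$ first as simplicial vertices, then use a perfect elimination ordering of $G[Y]$.

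The size of $K_v$ is lower-bounded by a Tur\'an/Caro--Wei argument applied to the complement of $G[N_G(v)\cap Y]$: this complement has at most $p_G(v)\le \epsilon n^2$ edges on $d_Y(v):=|N_G(v)\cap Y|$ vertices, so
\[
d_Y(v)-|K_v|\;\le\;\frac{2\,d_Y(v)\,p_G(v)}{d_Y(v)+2p_G(v)}\;\le\;\sqrt{d_Y(v)\,p_G(v)/2}.
\]
The total number of edge modifications then splits into $e_G(X)$ (all edges inside $X$ are removed) plus $\sum_{v\in X}(d_Y(v)-|K_v|)$ (edges from $X$ to $Y$ removed); no edges are added.

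To bound both terms by $O(\sqrt\epsilon\, n^2)$, I would split $X$ by the degree threshold $\sqrt\epsilon\, n$: ``small-degree'' vertices (with $d_G(v)\le \sqrt\epsilon\, n$) are isolated, contributing at most $\sqrt\epsilon\, n$ each and at most $\sqrt\epsilon\, n^2$ in total; ``large-degree'' vertices use the clique replacement, and a dyadic bucketing by degree combined with Cauchy--Schwarz, using $\sum_v d_G(v)\le n^2$ and $\sum_v p_G(v)\le |X|\epsilon n^2$, yields a geometric series summing to $O(\sqrt\epsilon\, n^2)$. The assumption $n\ge \epsilon^{-1}$ is used to absorb small additive constants.

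The main obstacle is the term $e_G(X)$: if $G[X]$ is dense, deleting all $X$-internal edges already exceeds the target. To avoid this, the construction must be refined so that $G'$ can retain useful $X$--$X$ edges. The natural fix is to process $X$ in a fixed order (say, by decreasing $d_G(v)$) and, at the step handling $v$, to let $K_v$ be a maximum clique of the \emph{current} graph $H$ contained in $N_G(v)\cap V(H)$, rather than only in $N_G(v)\cap Y$. Chordality is preserved because each $v$ is added as a simplicial vertex of $H$. The Caro--Wei bound still applies to the complement of $H[N_G(v)\cap V(H)]$, whose edge count is dominated by $p_G(v)$. Carrying out this cost bookkeeping---showing that the total still fits within $6\sqrt\epsilon\, n^2$ once one accounts for the evolving clique structure of $H$---is where I expect the technical heart of the proof to lie, likely via an amortized potential argument.
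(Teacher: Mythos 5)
Your high-level architecture (make every vertex of $X$ simplicial on top of the untouched chordal graph $G[Y]$) is the same as the paper's, but there are two real gaps.

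\textbf{The Caro--Wei bound is too weak.} Your per-vertex estimate $d_Y(v)-|K_v|\le \sqrt{d_Y(v)\,p_G(v)/2}$ holds for arbitrary graphs and does not use that $G[Y]$ is chordal. It loses a factor of about $\sqrt{d_Y(v)}$ compared to what is needed: with $p_G(v)\le\epsilon n^2$, it gives $d_Y(v)-|K_v|\lesssim \sqrt{d_Y(v)}\cdot\sqrt{\epsilon}\,n$, which can be as large as $n^{3/2}\sqrt{\epsilon}$ per vertex. Summing over up to $n$ vertices of $X$ and using Cauchy--Schwarz with $\sum d_Y(v)\le n^2$ yields only $O(\sqrt{\epsilon}\,n^{5/2})$, not $O(\sqrt{\epsilon}\,n^2)$; and dyadic bucketing does not recover the missing $\sqrt{n}$ because the only uniform cap on the co-degree is $p_G(v)\le\epsilon n^2$, so you cannot make the bucket sums cancel the extra $\sqrt{n}$. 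To illustrate: if $|X|\approx n$, $d_Y(v)\approx n$, $p_G(v)\approx\epsilon n^2$, Caro--Wei guarantees only $|K_v|\ge d/(1+2\epsilon n)$, i.e.\ $d_Y(v)-|K_v|\approx n$ whenever $\epsilon n\gg1$ --- useless. The paper's proof replaces this with the Gy\'arf\'as--Hubenko--Solymosi bound (Lemma~\ref{lem:dense set in chordal}), which is specific to chordal graphs and gives $d_Y(v)-|K_v|\le 2\sqrt{p_G(v)}\le 2\sqrt{\epsilon}\,n$ \emph{uniformly in $d_Y(v)$}; summing over $|X|\le n$ vertices then gives the required $O(\sqrt{\epsilon}\,n^2)$. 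Since you must search for $K_v$ inside a chordal graph anyway, this is the natural tool, and Caro--Wei genuinely cannot be substituted for it.

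\textbf{The $X$--$X$ edges are not actually handled.} You correctly flag that deleting all of $e_G(X)$ can be far too expensive, but your proposed fix --- processing $X$ in a fixed order and picking each $K_v$ inside the \emph{current} graph $H$, with an ``amortized potential argument'' --- is left entirely unspecified, and the claim that the co-edge count inside $H[N_G(v)\cap V(H)]$ is ``dominated by $p_G(v)$'' is not obviously true (your modifications to $H$ may \emph{add} co-edges relative to $G$). The paper instead sidesteps amortization: it first partitions $X$ into cliques $X_1,\dots,X_k$ with representatives $x_i$, chosen greedily so that each step is either a cheap singleton (degree $\le\sqrt{\epsilon}\,n$, at most $n$ such steps) or a large clique (size $>\sqrt{\epsilon}\,n$, hence at most $\epsilon^{-1/2}$ such steps), each large step costing $O(\epsilon n^2)$ thanks to a double-counting bound on $P_3$'s (Claim~\ref{claim:minimum maximal q}). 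Then it redirects all $Y$-neighborhoods of $X_i$ through the representative $x_i$ and shrinks to a clique via GHS. This is a concrete construction with explicit cost bookkeeping, whereas your plan at this point is a hope rather than an argument.
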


The proof uses the following result,
that shows that a dense chordal graph has a large clique.
\begin{lemma}[Gy\'arf\'as, Hubenko, Solymosi~\cite{GyarfasHS02}]
  \label{lem:dense set in chordal}
  Let~$G$ be chordal graph with~$n$ vertices and
  at least~$c\cdot n^2$ edges, then
  \[
    (1-\sqrt{1-2c})n \leq \omega(G).
  \]
\end{lemma}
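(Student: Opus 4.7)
The plan is to reduce the clique number bound to a sharp upper bound on the number of edges of a chordal graph in terms of its clique number, and then invert that inequality.

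First, I would prove the edge-count estimate
\[
  e(G) \leq (\omega - 1)n - \binom{\omega}{2}, \qquad \omega := \omega(G),
\]
for any chordal $G$ on $n$ vertices. The key tool is a perfect elimination ordering $v_1, v_2, \ldots, v_n$, which exists because $G$ is chordal (by the classical theorem of Rose already cited in Section~\ref{sec:nearly simplicial}): each $v_i$ is simplicial in $G_i := G[\{v_i, v_{i+1}, \ldots, v_n\}]$. Writing $d_i$ for the degree of $v_i$ inside $G_i$, we have $e(G) = \sum_{i=1}^n d_i$, and because $\{v_i\} \cup N_{G_i}(v_i)$ is a clique, $d_i \leq \omega - 1$. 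Trivially we also have $d_i \leq n - i$. Using the first bound for $i \leq n - \omega$ and the second for the last $\omega$ indices,
\[
  e(G) \leq (n - \omega)(\omega - 1) + \bigl(0 + 1 + \cdots + (\omega - 1)\bigr)
       = (\omega - 1)n - \binom{\omega}{2}.
\]

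Second, feeding the hypothesis $e(G) \geq c n^2$ into the bound above and clearing denominators gives
\[
  2c n^2 \;\leq\; (\omega - 1)(2n - \omega).
\]
The right-hand side is a downward-opening quadratic in $\omega$, so $\omega$ must lie at least as far to the right as the smaller root of $\omega^2 - (2n+1)\omega + (2n + 2cn^2) = 0$. A one-line estimate, using $(2n-1)^2 - 8cn^2 \leq 4n^2(1-2c)$, shows this root is at most $n\bigl(1 - \sqrt{1-2c}\bigr) + \tfrac{1}{2}$, so $\omega \geq (1-\sqrt{1-2c})n$, as claimed.

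The argument is elementary once one invokes the perfect elimination ordering; I do not expect a real obstacle, only that the quadratic manipulation in the second step must be packaged so that the square-root bound drops out exactly in the stated form rather than with a stray additive loss. An alternative is to induct on $n$ by peeling off one simplicial vertex $v$ (so $d_v \leq \omega - 1$) and invoking monotonicity of $f(n,\omega) := (\omega-1)n - \binom{\omega}{2}$ in $\omega$ on the range $\omega \leq n$, but the perfect-elimination-ordering presentation is more transparent.
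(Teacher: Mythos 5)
The paper does not prove this lemma; it simply cites Gy\'arf\'as, Hubenko, and Solymosi, whose paper in fact establishes a version for the strictly larger class of $C_4$-free graphs via a different (and harder) argument. Your self-contained proof exploits the special structure of chordal graphs through a perfect elimination ordering, which is cleaner for the case actually needed here, and it is essentially correct: the edge bound $e(G)\le(\omega-1)n-\binom{\omega}{2}$ follows exactly as you say from $d_i\le\min(\omega-1,\,n-i)$, and it is tight (achieved by $(\omega-1)$-trees). Inverting it gives $2cn^2\le(\omega-1)(2n-\omega)$, i.e.\ $\omega^2-(2n+1)\omega+2n+2cn^2\le 0$, whose discriminant is $(2n-1)^2-8cn^2\le 4n^2(1-2c)$, so
\[
  \omega \;\ge\; \frac{(2n+1)-\sqrt{(2n-1)^2-8cn^2}}{2}
  \;\ge\; \frac{(2n+1)-2n\sqrt{1-2c}}{2}
  \;=\; n\bigl(1-\sqrt{1-2c}\bigr)+\tfrac12.
\]
One small slip in your wording: shrinking the discriminant \emph{raises} the smaller root, so the correct statement is that the smaller root is \emph{at least} $n(1-\sqrt{1-2c})+\tfrac12$, not ``at most''; your conclusion $\omega\ge(1-\sqrt{1-2c})n$ is what actually follows, and it matches the lemma. (If the discriminant is negative, the quadratic is everywhere positive and the hypothesis $e(G)\ge cn^2$ is simply unsatisfiable, so there is nothing to prove in that regime.)
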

Given a graph~$G$, a set of vertices~$A\subseteq V(G)$ and a vertex~$u$ of~$G$,
we write~$N_A^G(u)$ the neighborhood of~$u$ in~$A$ for the graph~$G$,
that is the set of vertices~$v\in A$ with~$uv\in E(G)$.
We write~$N_A(u)$  when there is no ambiguity on the graph involved.

We can now proceed to the proof of Lemma~\ref{lem:nearly simplicial}.
\begin{proof}[Proof of Lemma~\ref{lem:nearly simplicial}]
  \resetClaimCounter
  For a vertex~$u \in V$ and a set~$A\subseteq V$,
  let $q_A(u)$ be the number of $P_3$
  on vertices~$\{u, a, v\}$ with middle vertex~$a$
  belonging to~$A$ and where~$v$ is any vertex of~$V$.
  
  \begin{claim}\label{claim:minimum maximal q}
    For every non-empty~$A \subseteq X$ there is $u\in A$
    such that~$q_A(u) \leq 2\epsilon n^2$.
  \end{claim}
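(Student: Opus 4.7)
The plan is a straightforward averaging argument: I will show
\[
  \sum_{u \in A} q_A(u) \le 2 \epsilon n^2 |A|,
\]
from which the claim follows by pigeonhole, since $A$ is non-empty.

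The key step is to exchange the order of summation. Each induced $P_3$ on $\{u,a,v\}$ contributing to $q_A(u)$ has middle vertex $a \in A \cap N(u)$ and second endpoint $v \in N(a) \setminus \{u\}$ non-adjacent to $u$, so
\[
  \sum_{u \in A} q_A(u) \;=\; \sum_{a \in A} \sum_{u \in A \cap N(a)} \bigl|\{v \in N(a) \setminus \{u\} : u \not\sim v\}\bigr|.
\]
For each fixed $a \in A$, enlarging the inner range from $u \in A \cap N(a)$ to $u \in N(a)$ can only increase the sum. But the enlarged sum counts each non-edge of the induced subgraph $G[N(a)]$ exactly twice, once per endpoint, and therefore equals $2\, p_G(a) \le 2\epsilon n^2$ by the hypothesis on $X$ (since $a \in A \subseteq X$). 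Summing this bound over $a \in A$ and dividing by $|A|$ yields the claim.

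There is no substantive obstacle; the proof is pure double counting on top of the handshake identity applied to the complement graph inside $N(a)$. The only subtle point is the implicit convention that $P_3$ denotes an \emph{induced} path, which is forced by the statement: a non-induced reading of $P_3$ would fail the claim already on $G = K_n$, where the only way to make $q_A$ vanish is to count induced paths.
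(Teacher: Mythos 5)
Your proof is correct and uses essentially the same averaging-plus-double-counting argument as the paper; the paper simply enlarges the outer sum (from $u\in A$ to $u\in V$) to reach the identity $\sum_{u\in V}q_A(u)=2\sum_{a\in A}p_G(a)$, whereas you enlarge the inner range (from $u\in A\cap N(a)$ to $u\in N(a)$), which is the same bookkeeping in a different order.
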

  \begin{proof}
    For every~$a\in A$, note that~$p_G(a)$
    is the number of induced~$P_3$ of~$G$ on vertices~$\{u,a,v\}$
    with middle vertex~$a$ and with~$u,v\in V$. 
    By double counting,
    \[
      \sum_{u \in V}q_A(u) = 2\sum_{a \in A}p_G(a)\leq |A|\cdot 2\epsilon n^2,
    \]
    and further
    \[
      \frac{1}{|A|}\cdot\sum_{u \in A}q_A(u) \leq 2\epsilon n^2.
    \]
  \end{proof}
  \begin{claim}\label{claim:clique partition}
    There is a partition~$\bigcup_{i=1}^kX_i$ of~$X$ and
    vertices~$(x_i)_{i=1}^k$ with~$x_i\in X_i$ such that
    $G$ is $4\epsilon^{1/2}n^2$-close to the graph $H$
    defined by the following properties.
    \begin{itemize}
    \item $H[X]$ is a disjoint union of the cliques~$X_1,\dots,X_k$;
    \item for every~$u\in X_i$, $N_Y^{H}(u)=N_Y^G(x_i)$; and
    \item $G[Y]$ and~$H[Y]$ are identical.
    \end{itemize}
  \end{claim}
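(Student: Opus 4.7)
The plan is to build the partition greedily by iterating Claim~\ref{claim:minimum maximal q}. Set $A_1 := X$; at each step $i$, apply Claim~\ref{claim:minimum maximal q} to obtain $x_i \in A_i$ with $q_{A_i}(x_i) \leq 2\epsilon n^2$, and dichotomise. A \emph{large} step, when $|N(x_i)\cap A_i| \geq \epsilon^{1/2}n$, sets $X_i := \{x_i\}\cup(N(x_i)\cap A_i)$; a \emph{singleton} step sets $X_i := \{x_i\}$. In either case $A_{i+1} := A_i\setminus X_i$, iterating until $A_i = \emptyset$. The resulting partition together with the chosen $x_i$ defines $H$ as in the statement.

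The editing cost between $G$ and $H$ is accounted step by step. At a large step, the non-edges to add split into those internal to $X_i\setminus\{x_i\}\subseteq N(x_i)$ and those of the form $\{u,v\}$ with $u\in X_i\setminus\{x_i\}$ and $v\in Y\cap N(x_i)\setminus N(u)$; these are disjoint subfamilies of non-edges of $N(x_i)$ (the first has both endpoints in $X$, the second has one endpoint in $X$ and one in $Y$), so their sum is at most $p_G(x_i) \leq \epsilon n^2$. The edges to remove split into those from $u\in X_i\setminus\{x_i\}$ to some $w\in A_{i+1}\subseteq X$ and those from $u$ to some $v\in Y\setminus N(x_i)$; each yields a distinct induced $P_3$ $(x_i,u,\cdot)$ counted by $q_{A_i}(x_i)$, so their sum is at most $q_{A_i}(x_i)\leq 2\epsilon n^2$. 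A large step therefore contributes at most $3\epsilon n^2$ to the total cost, while a singleton step only pays for the removal of $x_i$'s $G$-edges into $A_{i+1}$, a cost strictly less than $\epsilon^{1/2}n$ by the singleton criterion.

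Since each large step removes at least $\epsilon^{1/2}n$ vertices from $A$, there are at most $\epsilon^{-1/2}$ large steps, contributing together at most $3\epsilon^{1/2}n^2$; the at most $|X|\leq n$ singleton steps contribute at most $\epsilon^{1/2}n^2$. The total editing cost from $G$ to $H$ is therefore at most $4\epsilon^{1/2}n^2$, as required.

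The main point requiring care is the disjointness of the four subfamilies above and the verification that each added or removed edge of $E(G)\triangle E(H)$ is counted at exactly one step. This is a routine check, using that $X$ and $Y$ are disjoint and that $A_{i+1}$ avoids $N(x_i)$ by construction of $X_i$ at a large step.
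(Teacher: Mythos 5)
Your proof is correct and follows essentially the same iterative construction and accounting as the paper's. The only cosmetic difference is that you always select the vertex $x_i$ via Claim~\ref{claim:minimum maximal q} and then dichotomise on $d_{A_i}(x_i)$, whereas the paper first checks whether \emph{any} vertex of $A_i$ has degree at most $\epsilon^{1/2}n$ (and only applies Claim~\ref{claim:minimum maximal q} otherwise); both variants yield the same bounds step by step and the same final estimate of $4\epsilon^{1/2}n^2$.
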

  \begin{proof}
    We construct the partition iteratively.
    For some~$i$, assume that we have constructed~$X_1,\dots,X_{i-1}$
    and~$x_i,\dots,x_{i-1}$,
    and let us construct~$X_{i}$.
    Define~$A_i=X\setminus(\bigcup_{j=1}^{i-1}X_j)$ and
    assume that~$A_i\neq\varnothing$.
    We distinguish two cases.
    \begin{enumerate}
    \item\label{e:singleton}
      If there is a vertex~$u \in A_i$ with~$d_{A_i}(u)\leq \epsilon^{1/2}n$,
      set~$x_i=u$ and
      define~$X_{i}$ as the singleton~$\{x_i\}$.
    \item\label{e:clique}
      Otherwise, choose~$x_i\in A_i$ such that~$q_{A_i}(x_i)\leq 2\epsilon n^2$
      and set~$X_{i}:= \{x_i\} \cup N_{A_i}(x_i)$.
      The existence of such a vertex~$x_i$ is ensured
      by Claim~\ref{claim:minimum maximal q}.
    \end{enumerate}
    Assume now that~$X_1,\dots,X_k$ and~$x_1,\dots,x_k$ are defined.
    Let~$H$ be the graph described in the claim and
    let us estimate the number of edges in~$E(G)\triangle E(H)$.

    For every~$i\in[k]$,
    let~$m_i$ be the number of
    edges of~$G$ between~$X_{i}$
    and non-neighbors of~$x_i$ in~$A_i\cup Y$
    plus the number of missing edges inside~$N^G_{A_i\cup Y}(x_i)$.
    Note that~$|E(G)\triangle E(H)|=\sum_{i=1}^km_i$.
    We estimate~$m_i$ depending on the case
    chosen in the construction of~$X_i$.
    
    In Case~\ref{e:singleton},
    $m_i$ counts only the at most~$d_{A_i}(u) \leq \epsilon^{1/2}n$
    edges from~$x_i$ to~$A_i$.
    Since this happens at most~$n$ times in the process,
    the sum of~$m_i$ over every index~$i$ corresponding to
    Case~\ref{e:singleton} is at most~$\epsilon^{1/2}n^2$.
    
    In Case~\ref{e:clique},
    we claim that~$m_i\leq q_{A_i}(x_i) + p_G(x_i) \leq 3\epsilon n^2$.
    Indeed, every missing edge of~$G[N_{A_i\cup Y}(x_i)]$
    contributes for one in~$p_G(x_i)$;
    and every edge between~$X_{i}$ and a non-neighbor of~$x_{i}$
    form a~$P_3$ that contributes for one in~$q_{A_i}(x_i)$.
    Moreover, in Case~\ref{e:clique} it holds that
    $|X_{i}| = d_{A_i}(u) + 1 > \epsilon^{1/2}n$,
    so Case~\ref{e:clique} occurs at most~$\epsilon^{-1/2}$ times.
    As a consequence, the total contribution of these cases is
    at most
    $\epsilon^{-1/2} \cdot 3\epsilon n^2 = 3\epsilon^{1/2}n^2$.
    
    To sum up,
    the total number of edges in~$E(G)\triangle E(H)$ is at most
    $\sum_{i=1}^km_i\leq \epsilon^{1/2}n^2 + 3\epsilon^{1/2}n^2 = 4\epsilon^{1/2}n^2$,
    which proves the claim.
  \end{proof}

  \begin{claim}\label{claim:clique size}
    For every~$u\in X$,
    $N_Y(u)$ contains a clique of size at least
    $d_Y(u) - 2\epsilon^{1/2}n$.
  \end{claim}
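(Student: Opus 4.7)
The plan is to apply Lemma~\ref{lem:dense set in chordal} directly to the induced subgraph $G[N_Y(u)]$, which is chordal because $G[Y]$ is chordal. The key observation is that non-edges inside $N_Y(u)$ are non-edges inside $N_G(u)$, so the number of non-edges in $G[N_Y(u)]$ is at most $p_G(u)\leq \epsilon n^2$.

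Set $m = d_Y(u) = |N_Y(u)|$. If $m\leq 2\epsilon^{1/2}n$ the claim is vacuous (the trivial clique of size one, or even the empty clique, suffices), so I assume $m>2\epsilon^{1/2}n$. The number of edges of $G[N_Y(u)]$ is at least $\binom{m}{2} - \epsilon n^2$, which I write as $cm^2$ with
\[
  c \;\geq\; \frac{1}{2} - \frac{1}{2m} - \frac{\epsilon n^2}{m^2}.
\]
Then $1-2c \leq \frac{1}{m} + \frac{2\epsilon n^2}{m^2}$, and Lemma~\ref{lem:dense set in chordal} applied to $G[N_Y(u)]$ yields a clique of size at least
\[
  \left(1-\sqrt{1-2c}\right)m \;\geq\; m - \sqrt{m + 2\epsilon n^2}.
\]

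Finally I use the hypothesis $n\geq\epsilon^{-1}$, which gives $n \leq \epsilon n^2$, hence $m + 2\epsilon n^2 \leq 3\epsilon n^2$, and therefore $\sqrt{m+2\epsilon n^2}\leq \sqrt{3}\,\epsilon^{1/2}n < 2\epsilon^{1/2}n$. This yields the desired bound $\omega(G[N_Y(u)])\geq d_Y(u) - 2\epsilon^{1/2}n$.

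There is no real obstacle here; the only point requiring a bit of care is the bookkeeping that turns the $\sqrt{1-2c}$ factor from Lemma~\ref{lem:dense set in chordal} into an additive error of $2\epsilon^{1/2}n$, and it is precisely to absorb the stray $\sqrt{m}$ term coming from $\binom{m}{2}$ versus $m^2/2$ that the hypothesis $n\geq \epsilon^{-1}$ is invoked.
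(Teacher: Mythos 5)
Your proof is correct and follows essentially the same route as the paper: dispose of the case $d_Y(u)\leq 2\epsilon^{1/2}n$ trivially, then apply Lemma~\ref{lem:dense set in chordal} to the chordal graph $G[N_Y(u)]$ using that the number of non-edges inside $N_Y(u)$ is at most $p_G(u)\leq\epsilon n^2$, and invoke $n\geq\epsilon^{-1}$ to absorb the $\binom{m}{2}$ versus $m^2/2$ discrepancy. The only cosmetic difference is that the paper absorbs the $\frac{1}{2d}$ term into the $\frac{\epsilon n^2}{d^2}$ term before invoking the lemma, whereas you carry the $\sqrt{m+2\epsilon n^2}$ expression through and bound it at the end; both arrive at the same estimate.
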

  \begin{proof}
    Let~$d=d_Y(u)$ be the degree of~$u$ in~$Y$.
    The claim holds with the empty clique if $d \leq (2\epsilon)^{1/2}n$,
   so we assume that~$d > (2\epsilon)^{1/2}n$.
   We aim to apply Lemma~\ref{lem:dense set in chordal} on the graph
   $F=G[N_Y(u)]$ of size~$d$.
   The number of edges in~$F$ is at least
   \[
     \binom{d}{2} - p_G(u) \geq \frac{d^2}{2} -\frac{d}{2} - \epsilon n^2
     = d^2\left(\frac{1}{2}-\frac{\epsilon n^2}{d^2}-\frac{1}{2d}\right)
     \geq d^2\left(\frac{1}{2} -2\frac{\epsilon n^2}{d^2}\right),
   \]
   where the last estimation comes from $\epsilon n \geq 1$ and~$d \leq n$.
   Lemma~\ref{lem:dense set in chordal} applied to~$F$ with
   $c=\frac{1}{2} - 2\epsilon\frac{n^2}{d^2}$
    then gives~$\omega(F)\geq d - 2\epsilon^{1/2}n$, which proves the claim.
  \end{proof}
  We are now ready to finish the proof.
  For every~$i\in[k]$,
  Claim~\ref{claim:clique size} provides
  a clique~$C_i\subseteq N_Y^G(x_i)$ of~$G$
  with $d_Y^G(x_i) - |C_i| \leq (2\epsilon)^{1/2}n< 2\epsilon^{1/2}n$.
  Let~$H'$ be the graph obtained from~$H$ by deleting
  for each~$i$ every edge between~$X_i$
  and~$N_Y^G(x_i) \setminus C_i$, so that~$N_Y^{H'}(x_i)$
  is the clique~$C_i$.
  Since, $N_Y^{H}(u)=N_Y^{G}(x_i)$ and~$|N_Y^{G}(x_i)\setminus C_i|\leq2\epsilon^{1/2}n$
  for every~$u\in X_i$,
  the total number of edges in~$E(H)\triangle E(H')$ is at most~$2\epsilon^{1/2}n^2$.
  As a consequence,
  \[
    |E(G)\triangle E(H')|\leq |E(G)\triangle E(H)| + |E(H)\triangle E(H')|
    \leq 6\epsilon^{1/2}n^2.
  \]
  It remains to show that~$H'$ is a chordal graph.
  To see this, note that~$H'[Y]=G[Y]$
  so~$H'[Y]$ is chordal.
  Moreover,
  it follows from the construction that every vertex of~$X$ is simplicial in~$H'$
  since~$X$ is a disjoint union of cliques.
  This implies that~$H'$ is chordal and concludes the proof of the lemma.
\end{proof}

Note that the proof of Lemma~\ref{lem:nearly simplicial}
relies only on two properties of chordal graphs:
the existence of a big clique in dense sets
--given by Lemma~\ref{lem:dense set in chordal}-- and
the stability of chordal graphs by addition of simplicial vertices.
As an equivalent of Lemma~\ref{lem:dense set in chordal} holds for $C_4$-free
graphs (see~\cite{GyarfasHS02}) and adding a simplicial vertex does not
create an induced~$C_4$,
an equivalent to Lemma~\ref{lem:nearly simplicial}
could also be derived for $C_4$-free
graphs.

\section{\texorpdfstring{$M_2$}{M2}-free graphs}
\label{sec:M2-free}

For a graph~$G=(V,E)$ and two disjoint sets~$L,R\subseteq V$,
we write~$G[L,R]$ the bipartite graph with parts~$L$ and~$R$
and edge set~$\sst{\ell r\in E}{\ell\in L\text{ and }r\in R}$.
In such a bipartite graph,
we call~$M_2$ an induced bipartite matching of size~$2$,
that is a set~$\{\ell_1,\ell_2,r_1,r_2\}$ of distinct vertices
with $\ell_i\in L$, $r_i\in R$
and~$\ell_ir_i\in E(G)$ for~$i\in\{1,2\}$ and~$\ell_1r_2,\ell_2r_1\notin E(G)$,
as in Figure~\ref{fig:M2}.

\begin{figure}[b]
  \centering
  \begin{tikzpicture}[scale=0.4]
    \tikzstyle{node}=[circle,draw,fill=black,scale=0.5];
    \tikzstyle{edge}=[thick];
    \foreach\y in {-1,1} {
      \node[node] (A) at (-1, \y) {};
      \node[node] (B) at (1, \y) {};
      \draw[edge] (A)--(B);
    }
    \draw[dashed, gray] (-1.05, 0) ellipse (0.7 and 2.4);
    \draw[dashed, gray] (1.05, 0) ellipse (0.7 and 2.4);
    \node at (-1,-3.2) {$L$};
    \node at (1,-3.2) {$R$};
  \end{tikzpicture}
  \caption{The induced subgraph $M_2$.}\label{fig:M2}
\end{figure}
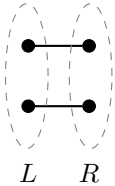

In this section, we describe the structure of
$M_2$-free bipartite graphs (Theorem~\ref{thm:structure of M2-free graphs})
and we show that they are testable with query
complexity~$O(\frac{1}{\epsilon}\ln\frac{1}{\epsilon})$
(Theorem~\ref{thm:testability of M2-free graphs}).

Let~$G$ be a graph and $L$ and~$R$ be two sets of vertices.
A vertex~$v$ of~$L$ is \emph{peelable in $G[L,R]$} if $N_R(v)=\varnothing$;
a vertex~$v$ of~$R$ is \emph{peelable in $G[L,R]$} if~$N_L(v)=L$.
For an integer~$k$, a vertex~$v\in L\cup R$ is \emph{$k$-peelable in~$G[L,R]$}
if either ($v\in L$ and~$|N_R(v)|\leq k$)
or ($v\in R$ and~$|L\setminus N_L(v)|\leq k$).
\begin{thm}[Structure of $M_2$-free graphs]\label{thm:structure of M2-free graphs}~
  Let~$G$ be a graph on~$L\cup R$.
  The following four properties are equivalent.
  \begin{enumerate}[(i)]
  \item\label{it:no M2} $G[L,R]$ contains no $M_2$.
  \item\label{it:peelable}
    For every subsets~$L_0\subseteq L$ and~$R_0\subseteq R$
    such that~$L_0\cup R_0$ is non-empty,
    there is a vertex of~$L_0 \cup R_0$
    that is peelable in~$G[L_0,R_0]$.
  \item\label{it:elimination sequence}
    There is an enumeration~$v_1,\dots,v_p$
    of~$R\cup L$ such that for every~$v_i\in L$ and~$v_j\in R$,
    $v_iv_j$ is an edge of~$G$ if and only if~$j<i$.
  \item\label{it:intervals}
    There is a family of intervals~$(I_v)_{v\in L\cup R}$ of~$[0,1]$
    such that~$0\in I_u$ for every~$u\in L$, $1\in I_v$ for every~$v\in R$,
    and~$I_u$ and~$I_v$ intersect if and only if~$uv$ is an edge of~$G$.
  \end{enumerate}
\end{thm}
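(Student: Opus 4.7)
My plan is to establish the cycle of implications (i)$\Rightarrow$(ii)$\Rightarrow$(iii)$\Rightarrow$(iv)$\Rightarrow$(i). The key structural fact driving the argument is that in an $M_2$-free bipartite graph $G[L,R]$, the family of neighborhoods $\{N_R(u) : u\in L\}$ is totally ordered by inclusion: two incomparable such neighborhoods would provide $r \in N_R(u) \setminus N_R(u')$ and $r' \in N_R(u') \setminus N_R(u)$, and $\{u, u', r, r'\}$ would induce an $M_2$. The same holds symmetrically for $\{N_L(v) : v \in R\}$, and both properties pass to any induced bipartite subgraph $G[L_0, R_0]$.

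For (i)$\Rightarrow$(ii), given non-empty $L_0 \cup R_0$, the cases $L_0 = \varnothing$ or $R_0 = \varnothing$ are immediate. Otherwise I pick $v^\star \in L_0$ minimizing $|N_{R_0}(v^\star)|$. If this minimum is zero, $v^\star$ is peelable in $L_0$. Otherwise I fix any $r \in N_{R_0}(v^\star)$; the chain property combined with minimality gives $N_{R_0}(v^\star) \subseteq N_{R_0}(v)$ for every $v \in L_0$, so $r$ is adjacent to all of $L_0$ and is peelable.

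For (ii)$\Rightarrow$(iii), I build the enumeration greedily by repeated peeling: let $v_1$ be a peelable vertex in $G[L,R]$, then $v_2$ a peelable vertex in the graph obtained by removing $v_1$, and so on. If $v_j \in L$ is chosen as peelable at step $j$, it has no neighbor in the remaining $R$-part and hence no edge to any $v_k \in R$ with $k > j$; dually, if $v_j \in R$ is chosen peelable, it is adjacent to all later $v_k \in L$. Together these give exactly the edge condition of (iii). For (iii)$\Rightarrow$(iv), I convert the order into intervals: for $v_i \in L$, set $I_{v_i} = [0, (i-1/2)/p]$; for $v_j \in R$, set $I_{v_j} = [j/p, 1]$. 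A short computation shows $I_{v_i} \cap I_{v_j} \neq \varnothing$ iff $j < i$, matching (iii). Finally, for (iv)$\Rightarrow$(i): an interval in $[0,1]$ containing $0$ has the form $[0, a_u]$ and one containing $1$ has the form $[b_v, 1]$, so for $\ell \in L$ and $r \in R$ adjacency amounts to $a_\ell \geq b_r$; an $M_2$ on $\{\ell_1, \ell_2, r_1, r_2\}$ would force the cyclic inequality $a_{\ell_1} < b_{r_2} \leq a_{\ell_2} < b_{r_1} \leq a_{\ell_1}$, a contradiction.

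The only step with real content is the chain observation and its application in (i)$\Rightarrow$(ii); the remaining three implications are constructive and amount to careful bookkeeping. The one subtle point is choosing the interval endpoints in (iii)$\Rightarrow$(iv) to produce the strict adjacency condition $j<i$, which is why I shift the $L$-side by $1/2$.
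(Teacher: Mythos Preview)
Your proof is correct and follows the same cycle of implications as the paper, with only cosmetic differences: the paper's (i)$\Rightarrow$(ii) runs the same minimum-degree argument by contradiction rather than first isolating the chain property, and its (iv)$\Rightarrow$(i) observes that an $M_2$ would yield an induced $C_4$ in the associated interval graph instead of chasing inequalities. One small remark: your $1/2$ shift in (iii)$\Rightarrow$(iv) is unnecessary, since the indices $i$ and $j$ are distinct and the paper's choice $I_{v_i}=[0,i/p]$, $I_{v_j}=[j/p,1]$ already gives intersection iff $j<i$.
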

\begin{proof}
  $\eqref{it:no M2}\Rightarrow\eqref{it:peelable}$.
  Assume for a contradiction that~$G[L_0,R_0]$ has no peelable vertex,
  that is every vertex of~$L_0$ has a neighbor in~$R_0$
  and every vertex of~$R_0$ has a non-neighbor in~$L_0$.
  Since at least one of~$L_0$ and~$R_0$ is non-empty,
  the assumption above implies that both of~$L_0$ and~$R_0$ are non-empty.
  Let us show that~$G[L_0,R_0]$ contains an induced $M_2$.
  Let~$\ell_1$ be a vertex of~$L_0$ that minimizes~$|N_{R_0}(\ell_1)|$ and
  take~$r_1\in N_{R_0}(\ell_1)$.
  By assumption, $r_1$ has a non-neighbor~$\ell_2$ in~$L_0$.
  By the construction of~$\ell_1$, it holds that
  $|N_{R_0}(\ell_1)|\leq|N_{R_0}(\ell_2)|$. Since~$r_1$ belongs to
  $N_{R_0}(\ell_1)\setminus N_{R_0}(\ell_2)$, there exists~$r_2$
  in~$N_{R_0}(\ell_2)\setminus N_{R_0}(\ell_1)$.
  The quadruple~$\{\ell_1,\ell_2,r_1,r_2\}$ then forms an induced~$M_2$
  of~$G[L_0,R_0]$.
  
  $\eqref{it:peelable}\Rightarrow\eqref{it:elimination sequence}$.
  To construct the sequence~$v_1,\dots,v_p$, we start with the set~$V_1=R\cup L$
  and we let~$v_{i}\in V_i$ be a peelable vertex of~$G[R\cap V_i, L\cap V_i]$
  as long as~$V_i$ is non-empty.
  Such a vertex exists because of~$\eqref{it:peelable}$.
  We then define~$V_{i+1}:=V_i\setminus\{v_i\}$.
  Assume now that~$v_i\in L$ and~$v_j\in R$ for some indices~$i$ and~$j$
  and let us show that $v_iv_j$ is an edge if and only if~$j<i$.
  It follows from the constuction and the definition of peelable that
  $N_{R \cap V_i}(v_i)=\varnothing$ and~$N_{L\cap V_j}(v_j)=L\cap V_j$.
  If~$i<j$, then $v_j\in R\cap V_i$, and further~$v_j\notin N(v_i)$
  by the first equality above.
  Similarly, if~$i>j$, then $v_i\in L\cap V_j$, and further~$v_i\in N(v_j)$.

  $\eqref{it:elimination sequence}\Rightarrow\eqref{it:intervals}$.
  It suffices to define~$(I_{v})_{v\in L\cup R}$ as follows.
  For every~$i\in[p]$, set~$I_{v_i}=[0,\frac{i}{p}]$ if~$v_i\in L$
  and~$I_{v_i}=[\frac{i}{p},1]$ if~$v_i\in R$.
  To deduce~$\eqref{it:intervals}$ from~$\eqref{it:elimination sequence}$,
  it then suffices to notice that~$[0,\frac{i}{p}]$ and~$[\frac{j}{p}, 1]$
  intersect if and only if~$j < i$.
  
  $\eqref{it:intervals}\Rightarrow\eqref{it:no M2}$.
  Consider the interval graph~$G'$ on~$L\cup R$ which
  is the intersection graph of the family~$(I_v)_{v\in L\cup R}$.
  Note that~$L$ and~$R$ are clique of~$G'$ and that
  by~$\eqref{it:intervals}$
  the bipartite graphs~$G[L,R]$
  and~$G'[L,R]$ are identical.
  It follows that if~$G$ contains an induced~$M_2$
  with edges~$\ell_1r_1$ and~$\ell_2r_2$,
  then $\ell_1r_1r_2\ell_2$ is an induced cycle of~$G'$, which is impossible as~$G'$ is an interval graph.
\end{proof}

Because of its decomposition structure,
$M_2$-free graphs are testable with a good
query complexity.
This is proved in Theorem~\ref{thm:testability of M2-free graphs}.
If one is not interested in having an explicit exponent in
the query complexity, this theorem can also be deduced from the
regularity lemma of Alon, Fischer and Newman~\cite{AlonFN07}.
\begin{thm}\label{thm:testability of M2-free graphs}
  Let~$G$ be a bipartite graph on~$V=L\cup R$ with~$|V|\leq n$.
  If one has to change at least~$\epsilon n^2$ edges to~$G[L,R]$
  to make it~$M_2$-free,
  then a set~$X\subseteq V$ chosen uniformly at random
  among subsets of~$V$
  of size~$m_{\ref{thm:testability of M2-free graphs}}(\epsilon)=\frac{4}{\epsilon}\ln\frac{1}{\epsilon}$
  induces a bipartite graph~$G[L\cap X, R\cap X]$ that contains a~$M_2$
  with probability at least~$\frac{1}{2}$.
\end{thm}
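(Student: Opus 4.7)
My plan is to prove the contrapositive. Assume that a uniformly random sample $X\subseteq V$ of size $m=\frac{4}{\epsilon}\ln\frac{1}{\epsilon}$ induces an $M_2$-free subgraph with probability greater than $\frac{1}{2}$; I would construct an $M_2$-free modification of $G$ differing on at most $\epsilon n^2$ edges, contradicting the $\epsilon$-far assumption. The approach is an iterative sampling argument in the spirit of Alon and Krivelevich~\cite{AlonK02}, adapted via the structural characterization of $M_2$-freeness in Theorem~\ref{thm:structure of M2-free graphs}: the peelable elimination sequence of item~\eqref{it:elimination sequence} plays here the role that a proper $k$-coloring plays there, and it is the $1$-dimensional chain nature of the structure that accounts for the better dependence on $\epsilon$.

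I would split the sample into $T=\lceil\ln(1/\epsilon)\rceil$ rounds of size $s=4/\epsilon$ each. The state maintained across rounds is a residual set $U_i\subseteq V$ (with $U_0=V$) and a partial elimination sequence $\sigma_i$ on $V\setminus U_i$, satisfying the invariant that $\sigma_i$ extends to a full elimination sequence of $V$ at a cumulative modification cost of at most $(i/T)\cdot \epsilon n^2/2$ edges. In round $i$, I inspect each sampled $v\in U_{i-1}$: if $v$ is \emph{cheaply peelable} in $G[U_{i-1}]$ (meaning $v\in L\cap U_{i-1}$ with at most $\epsilon n/4$ neighbors in $R\cap U_{i-1}$, or $v\in R\cap U_{i-1}$ with at most $\epsilon n/4$ non-neighbors in $L\cap U_{i-1}$), I append $v$ to $\sigma_{i-1}$ together with the block of vertices of $U_{i-1}$ sharing its adjacency pattern to the previously processed vertices, shrinking $|U_i|$ by a constant factor.

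The per-round dichotomy is the crux: either a cheaply peelable vertex with a large block is found, or the sample contains an induced $M_2$. Indeed, if no cheap peelable vertex exists in $U_{i-1}$, every vertex of $U_{i-1}$ has many wrong-side neighbors, and a refinement of the minimum-degree construction used in the proof of $\eqref{it:no M2}\Rightarrow\eqref{it:peelable}$ exhibits induced $M_2$'s on a positive-density family of 4-tuples of $V$, which a round of $s=4/\epsilon$ fresh samples hits with probability at least $1-\frac{1}{2T}$. A union bound over the $T$ rounds caps total failure probability at $\frac{1}{2}$; in the surviving branch, the final residue has size $|U_T|\leq \epsilon n$ and contributes at most $\epsilon n^2/2$ further modifications, closing the contradiction.

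The main obstacle I anticipate is the quantitative per-round dichotomy: translating the absence of cheaply peelable vertices in $U_{i-1}$ into a lower bound of $\Omega(\epsilon^c n^4)$ induced $M_2$'s, with $c$ small enough that a round of $s=4/\epsilon$ samples contains one with probability close to $1$. The minimum-degree argument of Theorem~\ref{thm:structure of M2-free graphs} produces only a single $M_2$, so amplification --- applying the construction to a positive-density family of near-minimum-degree $L$-vertices, and perhaps an internal peeling within each round --- is needed. Calibrating the peelability threshold $\epsilon n/4$ and the twin-block condition against the $M_2$-density estimate, while respecting the modification bookkeeping, is the technical heart of the proof and the source of the logarithmic factor in $m$.
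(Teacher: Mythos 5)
Your plan diverges from the paper's proof in a way that leaves the hard part unresolved, and the gap you flag at the end is precisely the step you would need and do not have. You aim to translate the absence of cheaply peelable vertices in $U_{i-1}$ into a lower bound of $\Omega(\epsilon^c n^4)$ induced $M_2$'s, so that a batch of $4/\epsilon$ fresh samples hits one. But the proof of $\eqref{it:no M2}\Rightarrow\eqref{it:peelable}$ only produces a single $M_2$ per non-peelable configuration, and no density statement is established; moreover if $|U_{i-1}|$ has already shrunk to $O(\epsilon n)$, there are only $O(\epsilon^4 n^4)$ four-tuples available in $U_{i-1}$ at all, so the required density $c$ is tight against the number of samples $s=4/\epsilon$ and the calibration is far from automatic. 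A second unjustified step is the claim that appending the twin-block of a cheaply peelable vertex shrinks $|U_i|$ by a constant factor: the set of vertices of $U_{i-1}$ sharing $v$'s adjacency pattern to the already-processed vertices can be a single vertex, so the bound $|U_T|\leq\epsilon n$ after $T=\lceil\ln(1/\epsilon)\rceil$ rounds does not follow.

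The paper avoids both difficulties. It does a one-shot (non-adaptive, non-sampled) peeling of $\epsilon n$-peelable vertices from the whole vertex set, yielding a residue $V_\ell$ with no $\epsilon n$-peelable vertex; the $\epsilon$-far hypothesis forces $V_\ell\neq\varnothing$, hence $|V_\ell|>\epsilon n$. The key move is then to apply the peelable characterization \emph{inside the random sample}: if $G[L_0\cap X, R_0\cap X]$ were $M_2$-free (with $L_0=L\cap V_\ell$, $R_0=R\cap V_\ell$), it would have a peelable vertex $v\in X\cap V_\ell$, which means $X\setminus\{v\}$ avoids a set of size at least $\epsilon n$ (the wrong-side neighbors or non-neighbors of $v$). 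A union bound over $v\in X$ and the term for $X\cap V_\ell=\varnothing$ gives probability at most $3|X|e^{-\epsilon|X|}$, and plugging $|X|=\frac{4}{\epsilon}\ln\frac{1}{\epsilon}$ closes the proof. Crucially, no $M_2$ is ever counted; the logarithmic factor in $m$ comes from the union bound over $|X|$, not from multiple rounds. Your elimination-sequence-as-coloring intuition is right in spirit, but the concrete argument you sketch replaces a clean structural union bound with an $M_2$-density estimate that neither the paper nor you have.
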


\begin{proof}
  First note that there are at most~$n^2/4$ edges between~$L$ and~$R$,
  so the hypothesis does not hold if~$\epsilon\geq1/4$. We may therefore assume
  that~$\epsilon<1/4$.
  
  We iteratively peel the vertices of~$G$ as follows:
  we start with~$i=1$ and the set of vertices~$V_1=L\cup R$.
  As long as the the bipartite graph~$G[L\cap V_i,R\cap V_i]$
  has a $\epsilon n$-peelable vertex~$v_i$,
  we set~$V_{i+1}=V_{i}\setminus\{v_i\}$.
  We then reiterate with~$i:=i+1$ until $V_i$ contains no $\epsilon n$-peelable
  vertex.
  This gives a list~$v_1,\dots,v_\ell$ of vertices
  such that~$v_i$ is $\epsilon n$-peelable in~$G[L\cap V_i,R\cap V_i]$
  and~$V_i=(R\cup L)\setminus\sst{v_j}{1\leq j<i}$ for every~$i\in[\ell]$.
  It also follows from the construction that the final set~$V_\ell$ contains no peelable vertex.
  
  If~$V_\ell$ is empty,
  we construct a $M_2$-free bipartite graph~$H$ on~$L\cup R$ from~$G[L,R]$
  as follows:
  for every~$i\in[\ell]$, we add every missing edge from~$v_i$ to~$V_i\cap L$ if~$v_i\in R$
  or we delete every edge from~$v_i$ to~$V_i\cap R$ if~$v_i\in L$ for every~$i\in[\ell]$. 
  Since each~$v_i$ is $\epsilon n$-peelable in~$G[L\cap V_i, R\cap V_i]$,
  the operation above adds/deletes at most~$\epsilon n$ edges for each vertex,
  so at most~$\epsilon n^2$ in total.
  This ensures that~$v_i$ is peelable in~$H[L\cap V_i,R\cap V_i]$ for every~$i\in [\ell]$,
  so by Theorem~\ref{thm:structure of M2-free graphs}\eqref{it:elimination sequence},
  the bipartite graph~$H$ is $M_2$-free, which contradicts the hypothesis.
  
  We now assume that~$V_\ell$ is non-empty
  and set~$L_0=L\cap V_\ell$ and~$R_0=R\cap V_\ell$.  
  Since the vertices of~$V_\ell$ are not $\epsilon n$-peelable ,
  it must hold that~$|V_\ell|>\epsilon n$.
  By Theorem~\ref{thm:structure of M2-free graphs}\eqref{it:peelable},
  $G[L_0\cap X, R_0\cap X]$ can be $M_2$-free only if it has a peelable vertex.
  The probability that one fixed vertex~$v\in X\cap V_\ell$
  is peelable in~$G[L_0\cap X, R_0\cap X]$
  is the probability that~$X\setminus\{v\}$ does not intersect
  the set~$N_{R_0}(v)$ if~$v\in L$ or
  the set~$L_0\setminus N_{L_0}(v)$ if~$v\in R$.
  In both case, this set has size at least~$\epsilon n$.
  Therefore, the probability that~$v$ is peelable in~$G[L_0\cap X, R_0\cap X]$
  is at most~$(1-\epsilon)^{|X|-1}$.
  Moreover, the probability that~$X\cap V_\ell\neq\varnothing$
  is at most~$(1-\epsilon)^{|X|}$.

  By the union bound, the probability that~$G[L_0\cap X, R_0\cap X]$
  is $M_2$-free is at most
  \begin{align*}
    |X|\left(1-\epsilon\right)^{|X|-1}
    +\left(1-\epsilon\right)^{|X|}
    & = \left(\frac{|X|}{1-\epsilon}+1\right)\cdot\left(1-\epsilon\right)^{|X|}\\
    & < 3|X|\cdot e^{-\epsilon|X|}.
  \end{align*}
  For the last estimation we used that~$\epsilon < \frac{1}{4}$.
  With $|X|=\frac{4}{\epsilon}\ln\frac{1}{\epsilon}$,
  and using that~$\epsilon<\frac{1}{4}$
  and~$\epsilon\ln\frac{1}{\epsilon}\leq\frac{1}{e}$, this bound becomes
  \[
    \frac{12}{\epsilon}\ln\frac{1}{\epsilon}\cdot\epsilon^{4}
    \leq \frac{12}{e}\epsilon^2
    \leq \frac{12}{16e} < \frac{1}{2}.
  \]
  This concludes the proof.
\end{proof}

\section{Chordal graph representations}
\label{sec:chordal representations}

Chordal graphs are exactly the intersection graphs of subtrees of a
tree~\cite{gavril74}.
For our purpose, it is convenient to express this with topological trees.
A \emph{topological tree}~$\T$ is the topological space
described by a graph~$G$ that is a tree.
The \emph{leaves} of such a topological tree are the points of~$\T$ associated
to the leaves (in the graph sense) of~$G$.
A subtree~$T$ of a topological tree~$\T$ is a non-empty connected
subset of~$\T$.
In this case,~$T$ is also a topological tree.

A graph~$G=(V,E)$ is chordal if and only if there is a topological tree~$\T$
and a family~$(T_v)_{v\in V}$ of subtrees of~$\T$ such that~$uv\in E$
if and only if~$T_u\cap T_v\neq\varnothing$ for every distinct~$u,v\in V$.
In this case, the family~$(T_v)_{v\in V}$ is
a \emph{chordal representation} of~$G$.

Chordal representations can be simplified using the following property.
\begin{prop}\label{prop:nonempty tree intersection}
  Let~$(T_v)_{v\in V}$ be a chordal representation on~$\T$ of a graph~$G=(V,E)$ and
  let~$\T'$ be a subtree tree of~$\T$
  such that~$\T'\cap T_v\neq\varnothing$ every~$v\in V$.
  Then~$(T_v\cap\T')_{v\in V}$ is a chordal representation of~$G$ on~$\T'$.
\end{prop}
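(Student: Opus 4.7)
The plan is to verify the two requirements of a chordal representation for the family $(T_v\cap\T')_{v\in V}$ on $\T'$: first that each $T_v\cap\T'$ is a subtree of $\T'$, and second that $(T_u\cap\T')\cap(T_v\cap\T')\neq\varnothing$ exactly when $uv\in E$.

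For the first point, I would use that the intersection of two subtrees of a topological tree is always connected (when non-empty). Since $T_v\cap\T'$ is non-empty by assumption, it follows that $T_v\cap\T'$ is a non-empty connected subset of $\T'$, that is, a subtree of $\T'$ in the sense of Section~\ref{sec:chordal representations}.

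For the second point, I would rewrite $(T_u\cap\T')\cap(T_v\cap\T')=T_u\cap T_v\cap\T'$. The direction $uv\notin E\Rightarrow T_u\cap T_v\cap\T'=\varnothing$ is immediate since $T_u\cap T_v$ is already empty. The other direction is the key step: assuming $uv\in E$, the three subtrees $T_u$, $T_v$, $\T'$ pairwise intersect ($T_u\cap T_v\neq\varnothing$ because $uv\in E$, and $T_u\cap\T'$, $T_v\cap\T'$ are non-empty by hypothesis), so by the Helly property for subtrees of a tree, they share a common point, giving $T_u\cap T_v\cap\T'\neq\varnothing$.

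The only potential obstacle is invoking the two folklore facts used above — that the intersection of two subtrees of a (topological) tree is connected if non-empty, and the Helly property for three pairwise intersecting subtrees. Both are standard consequences of the uniqueness of paths in a tree, and I would either cite them or include a one-line justification (take the unique arc between points of two pairwise intersections and show it must meet the third subtree). Once these are granted, the proposition follows immediately.
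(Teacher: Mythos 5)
Your proposal is correct and follows essentially the same route as the paper: the non-edge direction is immediate by monotonicity, and the edge direction is exactly the Helly property for subtrees of a tree applied to $T_u$, $T_v$, $\T'$. You give slightly more detail on why $T_v\cap\T'$ is a subtree (the paper just asserts this from the hypothesis), but the argument is otherwise the same.
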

\begin{proof}
  The hypothesis ensures that~$T_u':=T_u\cap \T'$ is indeed a subtree of~$\T'$
  for every~$u\in V$.
  Let us show that for every pair of distinct vertices~$u,v\in V$,
  the trees~$T_u'$ and~$T_v'$ intersect if and only if~$uv\in E$.

  Since~$T_u'$ and~$T_v'$ are subsets of~$T_u$ and~$T_v$ respectively,
  it is clear that if~$uv\notin E$, then $T_u\cap T_v=\varnothing$
  so~$T_u'\cap T_v'=\varnothing$
  Now, assume that~$uv\in E$.
  In this case, the trees~$T_u$ and~$T_v$ intersect,
  and we know from the assumption that both of~$T_u$ and~$T_v$
  intersect the tree~$\T'$.
  As trees have the Helly property,
  it follows that~$T_u\cap T_u\cap \T'=T_u'\cap T_v'$ is indeed non-empty.
\end{proof}

A chordal representation~$(T_v)_{v\in V}$ on~$\T$ of a graph~$G$ is \emph{minimal}
if there is no strict subtree~$\T'\subsetneq\T$ such that~$(T_v\cap\T')_{v\in V}$
is a chordal representation of~$G$ on~$\T$.
If~$G$ is chordal, then~$G$ has a minimal chordal representation.
Minimal representations
are characterized by the following property.
\begin{prop}\label{prop:leaf}
  A chordal representation~$(T_v)_{v\in V}$ on~$\T$ is minimal
  if and only if for every leaf~$\ell$ of~$\T$ there is a vertex~$v\in V$
  with~$T_v=\{\ell\}$.
\end{prop}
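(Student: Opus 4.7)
The plan is to establish the two directions separately. The forward direction (every leaf covered by a singleton $T_v$ implies minimality) will follow from the topological fact that a strict subtree of $\T$ must omit at least one leaf, together with the obvious impossibility of killing a singleton $\{\ell\}$ by intersecting with a set that misses $\ell$. The backward direction (minimality implies every leaf is covered by a singleton $T_v$) will be obtained by trimming a short arc near any ``unused'' leaf and invoking Proposition~\ref{prop:nonempty tree intersection}.

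For the ``$\Leftarrow$'' direction, I will argue by contradiction. Suppose $(T_v)$ is not minimal, witnessed by a strict subtree $\T'\subsetneq\T$ on which $(T_v\cap\T')_{v\in V}$ is still a chordal representation (so every $T_v$ meets $\T'$). Pick any point $x\in\T\setminus\T'$ and let $C$ be its connected component in $\T\setminus\T'$. The closure $\overline C$ is itself a subtree of $\T$ meeting $\T'$ in exactly one point $p$ (otherwise $\T$ would contain a cycle), so $\overline C$ is a ``pendant'' subtree attached to $\T'$ at $p$, and any leaf of $\overline C$ other than $p$ is a leaf of $\T$ outside $\T'$. Calling this leaf $\ell$ and choosing $v\in V$ with $T_v=\{\ell\}$ gives $T_v\cap\T'=\varnothing$, contradicting the assumption that the restriction is still a representation.

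For the ``$\Rightarrow$'' direction, suppose some leaf $\ell$ of $\T$ is not equal to any $T_v$. I will construct a strict subtree $\T'\subsetneq\T$ such that every $T_v\cap\T'$ is still nonempty; Proposition~\ref{prop:nonempty tree intersection} then upgrades $(T_v\cap\T')$ to a chordal representation of $G$ on $\T'$, contradicting minimality. Let $e$ be the edge of $\T$ incident to $\ell$, and form $\T'$ by removing from $\T$ a short half-open arc of length $\delta>0$ at the $\ell$-end of $e$. For every $v$ with $\ell\notin T_v$, closedness of $T_v$ and $\ell\notin T_v$ give a positive distance between $T_v$ and $\ell$, so for small enough $\delta$ the removed arc avoids $T_v$ entirely. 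For every $v$ with $\ell\in T_v$, the hypothesis $T_v\neq\{\ell\}$ together with connectedness of $T_v$ forces $T_v$ to extend into $e$ by some positive length, so for small enough $\delta$ the remainder $T_v\cap\T'$ is a nonempty connected subtree. Since $V$ is finite, taking $\delta$ below all these thresholds simultaneously yields the desired $\T'$.

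The only place requiring a little care is the topological claim used in the ``$\Leftarrow$'' direction, namely that the complement of a proper subtree of a finite topological tree contains a leaf of $\T$; this is routine from the CW-complex structure of $\T$ and I do not anticipate any real obstacle. Everything else reduces to a finitary compactness argument over the finite index set $V$.
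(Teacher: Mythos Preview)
Your proposal is correct and follows essentially the same route as the paper: for one direction you trim a short open arc near an unused leaf and invoke Proposition~\ref{prop:nonempty tree intersection}, and for the other you use that a strict subtree of~$\T$ must omit some leaf, so the corresponding singleton~$T_v=\{\ell\}$ would become empty. One small caveat: you appeal to closedness of~$T_v$, which the paper's definition of subtree (a nonempty connected subset) does not guarantee; but since all you actually need is $T_v\cap\T'\neq\varnothing$, and this follows directly from $T_v\neq\{\ell\}$ by picking any point of $T_v\setminus\{\ell\}$ and taking $\delta$ small enough to keep it, the argument goes through without that assumption.
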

\begin{proof}
  If every leaf~$\ell$ corresponds to a vertex~$v_\ell\in V$
  with~$T_{v_\ell}=\{\ell\}$, it is clear that ~$(T_v)_{v\in V}$
  is minimal because for every strict subtree~$\T'\subsetneq\T$
  there is a leaf~$\ell$ of~$\T$ that is not in~$\T'$.
  In that case,~$T_{v_\ell}\cap\T'=\varnothing$ is not a subtree of~$\T'$.

  Let us prove the other implication.
  Assume that there is a leaf~$\ell$ of~$\T$ such that~$T_v\neq\{\ell\}$
  for every~$v\in V$ and let us show that~$(T_v)_{v\in V}$
  is not minimal.
  In this case, there is a small open neighborhood~$U$ of~$\ell$ in the
  topological space~$\T$ such that~$T_v\setminus U\neq\varnothing$
  for every~$v\in V$ and~$\T':=\T\setminus U$ is a topological tree.
  It follows that the elements of~$(T_v\cap\T')_{v\in V}$
  are not empty.
  By Proposition~\ref{prop:nonempty tree intersection},
  we deduce that~$(T_v\cap\T')_{v\in V}$ is a chordal representation
  of~$G$.
  This proves that~$(T_v)_{v\in V}$ is not a minimal representation of~$G$,
  which concludes the proof.
\end{proof}
In the proof of Theorem~\ref{thm:main},
we need an upper bound on the number of minimal chordal representations of
a graph.
\begin{lemma}\label{lem:chordal repr}
  A graph~$G$ on~$n$ vertices has at most~$m_{\ref{lem:chordal repr}}(n)=(3n)^{2n^2}$
  minimal chordal representations up to homeomorphism.
\end{lemma}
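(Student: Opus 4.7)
The plan is to encode each minimal chordal representation, up to homeomorphism, as a finite combinatorial object of polynomial size, and then count such objects directly. First I would invoke Proposition~\ref{prop:leaf}: in a minimal representation on $\T$, every leaf of $\T$ is a singleton $T_v=\{\ell\}$, so $\T$ has at most $n$ leaves. Suppressing the degree-$2$ vertices of $\T$ gives the \emph{essential skeleton} $\T^s$, a combinatorial tree with at most $2n-2$ vertices and at most $2n-3$ edges, each edge corresponding to a maximal arc of $\T$ joining two essential vertices.

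I would then record three pieces of data: (a) the isomorphism type of $\T^s$ as an abstract tree; (b) for each essential vertex $x$ of $\T^s$, the subset of $V$ consisting of those $v$ with $x\in T_v$; and (c) for each edge $e$ of $\T^s$, the linear order in which the topological boundary points of the subtrees $T_v$ appear in the interior of $e$, each such point labeled by the corresponding $v\in V$ and by its type (entering or leaving $T_v$). Since $T_v\cap e$ is a (possibly empty) subinterval of $e$, each subtree contributes at most two boundary points per edge, so the sequence in (c) has length at most $2n$. A direct check, using Proposition~\ref{prop:nonempty tree intersection} to confirm that inserting the labeled boundary points of (c) into $\T^s$ recovers $(\T,(T_v)_{v\in V})$ up to homeomorphism, shows that this encoding is faithful.

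Counting then yields the bound: the number of abstract trees on at most $2n$ nodes is at most $(2n)^{2n}$ by Cayley, the subset assignment in (b) contributes at most $(2^n)^{2n-2}\le 2^{2n^2}$, and on each of the at most $2n-3$ edges, the ordered labeled sequences of length at most $2n$ from an alphabet of size $2n$ number at most $(2n)^{2n+1}$. Multiplying the three factors and absorbing the lower-order terms into the base gives the target bound $(3n)^{2n^2}$.

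The main difficulty is keeping the encoding tight enough. A naive refinement of $\T$ that inserts a vertex at every subtree boundary point produces a combinatorial tree on $\Theta(n^2)$ vertices, and then recording an arbitrary $n$-element subset at each such vertex already costs $\exp(\Theta(n^3))$, missing the target by a factor of order $n/\log n$ in the exponent. Separating the ``global'' data (the skeleton together with vertex labellings on only $O(n)$ essential vertices) from the ``local'' data on each edge (encoded compactly as a single ordered labeled sequence of length $\le 2n$) is what reduces the total information to $O(n^2\log n)$ bits, matching $(3n)^{2n^2}$.
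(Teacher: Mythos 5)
Your encoding idea is sound and is a genuinely different route from the paper's: you record the skeleton, the vertex incidences at essential nodes, and a per-edge ordered sequence of labeled boundary points, while the paper builds the representation one subtree at a time, choosing each subtree's (at most~$n$) leaves among the $\le 2n^2$ sections cut out by the skeleton and the previously placed subtrees. Both start from Proposition~\ref{prop:leaf}.

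However, your final count does not yield the stated bound. In item (c) you bound the sequence on \emph{each} edge by $(2n)^{2n+1}$, and there are up to $2n-3$ edges, so the contribution of (c) is $\bigl((2n)^{2n+1}\bigr)^{2n-3}\approx(2n)^{4n^2}$. Together with $(2n)^{2n}$ from (a) and $2^{2n^2}$ from (b), the product is on the order of $(cn^2)^{2n^2}$ for a constant~$c$, not $(3n)^{2n^2}$: the exponent is off by a factor of~$2$ and the base is off by a factor of~$n$. The source of the over-count is that you allow up to~$2n$ boundary points on each of $\approx 2n$ edges, i.e.\ $\approx 4n^2$ in total, whereas the actual number of interior boundary points across all edges is $O(n^2)$ and cannot be split that liberally edge-by-edge. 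To repair the argument you would need a global count of the boundary points (e.g.\ a single word of length $O(n^2)$ over an alphabet of size $O(n)$, interleaved with at most $2n$ edge separators) rather than a per-edge bound raised to the power of the number of edges — and even then the base constant has to be tracked more carefully than "absorbing lower-order terms," since (b) alone already contributes $2^{2n^2}$ and, multiplied with a $(2n)^{2n^2}$-type term, gives base $4n$, not $3n$. (Note also that (b) is actually redundant: the limits of the per-edge intervals in (c) already determine which essential vertices belong to each $T_v$, so dropping (b) is one way to recover some slack.)
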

Lemma~\ref{lem:chordal repr} has the right order of magnitude in the sense that there
are graphs on~$n$ vertices with $n^{\Omega(n^2)}$ non-homeomorphic
minimal chordal representations.
Indeed, the graph that consists of a clique
of size~$n/2$ and $n/2$ isolated vertices can be represented on a star
with~$n/2$ branches, taking a leaf for each isolated vertex and where
every vertex of the clique contains the center.
Then, there is $(n/2)!$ ways to chose the order of the bounds of the trees
of the clique on each of the~$n$ branches,
giving in total $((n/2)!)^{n/2}= n^{n^2/4 + O(n)}$ non-isomorphic
minimal chordal representations. See Figure~\ref{fig:star}.

\begin{figure}[b]
  \centering
  \begin{tabular}{c c c}
    \begin{tikzpicture}
      \tikzstyle{node}=[circle,draw,scale=0.5];
      \tikzstyle{edge}=[thick];
      \begin{scope}
        \foreach\a/\c/\i in {0/blue/3, 120/green/1, 240/purple/2}{
          \node[\c,fill,node,label={\a:$v_\i$}] (A\a) at (\a:0.8) {};
        }
      \end{scope}
      \begin{scope}[xshift=2cm]
        \foreach\a/\c/\i in {0/10/6, 120/50/4, 240/90/5}{
          \node[node, fill, yellow!\c!red,label={\a:$v_\i$}] at (\a:0.8){};
        }
      \end{scope}
      \foreach\a in {0, 120, 240}{
        \foreach\b in {0, 120, ..., \a}{
          \draw[edge] (A\a) -- (A\b);
        }
      }
    \end{tikzpicture} &\hspace{2cm} &
    \begin{tikzpicture}[scale=1.2]
      \def\l{0.6mm}
      \tikzstyle{node}=[circle,draw,fill,scale=0.2];
      \tikzstyle{edge}=[thick];               
      \foreach\a/\c/\i in {-30/10/4, 90/50/5, 210/90/6}{
        \draw[gray!20!white, line width=1.4mm] (0,0) -- (\a:1.13);
        \node[circle,draw,fill,scale=0.3, yellow!\c!red,
        label={[yellow!\c!red]\a:$T_{v_\i}$}]
        at (\a:1.1) {};}
      \foreach\a/\r/\dx in {-30/3/1, 90/6/0.5, 210/9/1}{
        \draw[blue, line width=\l, xshift=\dx mm]
        (0,0) -- (\a:0.\r);}
      \foreach\a/\r in {-30/9, 90/3, 210/6}{
        \draw[purple, line width=\l, xshift=0mm]
        (0,0) -- (\a:0.\r);}
      \foreach\a/\r/\dx in {-30/7/-1, 90/9/-0.5, 210/3/-1}{
        \draw[green, line width=\l, xshift=\dx mm]
        (0,0) -- (\a:0.\r);}
      \node[blue] at (30:0.5) {$T_{v_1}$};
      \node[green] at (150:0.5) {$T_{v_2}$};
      \node[purple] at (270:0.5) {$T_{v_3}$};
    \end{tikzpicture} \\
  \end{tabular}
  \caption{
    Left: The chordal graph $G=K_{n/2}\cup E_{n/2}$ with $n=6$.
    Right: One of the $((n/2)!)^{n/2}$ minimal non-isomorphic representations of~$G$ on the star.
  }\label{fig:star}
\end{figure}
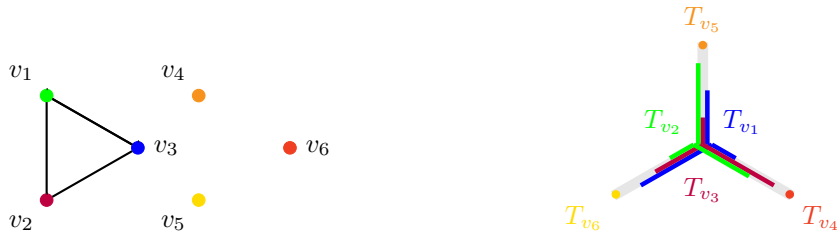

\begin{proof}[Proof of Lemma~\ref{lem:chordal repr}]
  Let $(T_u)_{u\in V}$ be a minimal chordal representation of~$G$ on
  the topological tree~$\T$.
  By Proposition~\ref{prop:leaf}, the tree~$\T$
  has at most~$n$ leaves, and
  thus~$\T$ the union of less than~$2n$ path sections.
  Such a tree can be constructed inductively by adding paths one by one,
  each time choosing among less than~$2n$ possibilities.
  It follows that there are less that~$(2n)^{2n}$ possibilities for~$\T$,
  up to homeomorphism.
  
  Now, a family of~$n$ subtrees~$(T_u)_{u\in V}$ of~$\T$ considered
  up to homeomorphism
  can be constructed inductively by choosing the trees one by one.
  Note that a subtree~$T_u$ also has at most~$n$ leaves
  and that a subtree of~$\T$ is determined by its leaves.
  When constructing~$T_u$, note that there are at most~$2n + n^2\leq 2n^2$
  sections formed by the previously chosen trees.
  As a consequence, there are at most~$(2n^2)^n$ choices
  for the leaves of~$T_u$.
  In total, there are therefore at most~$(2n^2)^{n^2}$ non-equivalent ways
  of choosing~$(T_u)_{u\in V}$.

  In total this gives~$(2n)^{2n}\cdot(2n^2)^{n^2}\leq (3n)^{2n^2}$
  different minimal chordal representations.
\end{proof}

\section{Pinned chordal graph}
\label{sec:pinned}

\begin{definition}
  Let~$\T$ be a topological tree.
  Fix a set~$V$ of vertices and associate to each vertex~$v\in V$
  a point~$x_v\in\T$.
  A graph~$G$ on~$V$ is \emph{$(x_v)_{v\in V}$-pinned}
  if there is a chordal representation~$(T_v)_{v\in V}$ of~$G$
  on the tree~$\T$ such that~$x_v\in T_v$ for every~$v\in V$.
  In this case,~$(T_v)_{v\in V}$ is a
  \emph{$(x_v)_{v\in V}$-pinned representation of~$G$ on~$\T$}.
\end{definition}
Given a topological tree~$\T$
and two elements~$a,b\in\T$, let~$[a,b]$ denote the (unique) shortest path from~$a$
to~$b$ in~$\T$.
Note that~$[a,b]$ is also the shortest path from~$a$ to~$b$ in every tree~$\T'$
such that~$\T$ is a subtree of~$\T'$.
We write~$(a,b)$ for the set~$[a,b]\setminus\{a,b\}$.

Lemma~\ref{lem:YS} shows that the extensibility of
a chordal representation
nearly boils down to the existence of pinned representation.
Lemma~\ref{lem:test pinned} shows that a polynomial tester can distinguish between
chordal graphs and graphs that are~$\epsilon$-far from being pinned.
\begin{lemma}\label{lem:YS}
  Let~$G=(V,E)$ be a graph and $S$ be a set of vertices.
  Let~$(T_v)_{v\in S}$ be a chordal representation on~$\T$ of~$G[S]$.
  Set
  \[
    Y_S=\sst{u\in V}{\text{$N(u)\cap S$
        is not a clique of~$G$}}.
  \]
  There is a set~$\G\subseteq\T$ of size at most~$\binom{|S|}{2}$ and
  a family~$(x_{v})_{v\in Y_S}$ of elements of~$\G$
  with the following property.
  For every~$U\subseteq Y_S$,
  if~$G[U\cup S]$ has a chordal representation that extends~$(T_v)_{v\in S}$
  on a tree~$\T'$ that extends~$\T$
  then~$G[U]$ has a $(x_v)_{v\in U}$-pinned
  representation on~$\T$.
\end{lemma}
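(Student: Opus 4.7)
\textbf{Proof plan for Lemma~\ref{lem:YS}.}
The plan is to build $\G$ out of one ``unavoidable'' point per non-adjacent pair of~$S$: since a vertex $u\in Y_S$ is adjacent to two non-adjacent vertices of~$S$, any chordal representation that extends~$(T_v)_{v\in S}$ is forced to make~$T_u$ pass through a specific spot of~$\T$. Once this is arranged, the pinned representation of~$G[U]$ will be obtained simply by intersecting each subtree of the extending representation with~$\T$.

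Concretely, for every pair~$\{s_1,s_2\}\in\binom{S}{2}$ with~$s_1s_2\notin E$, the subtrees $T_{s_1}$ and $T_{s_2}$ are disjoint (because $(T_v)_{v\in S}$ realizes~$G[S]$), and the unique path between them in~$\T$ is a non-trivial segment; I pick an arbitrary interior point~$g_{s_1s_2}$ of this segment and set
\[
  \G:=\sst{g_{s_1s_2}}{\{s_1,s_2\}\in\tbinom{S}{2},\ s_1s_2\notin E},
\]
which has cardinality at most~$\binom{|S|}{2}$. For each~$u\in Y_S$, the definition of~$Y_S$ provides a pair~$(s_1^u,s_2^u)\in N(u)\cap S$ with~$s_1^us_2^u\notin E$, and I let~$x_u:=g_{s_1^us_2^u}\in\G$.

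Now, given $U\subseteq Y_S$ and an extending representation~$(T'_v)_{v\in U\cup S}$ on some~$\T'\supseteq\T$, I take~$T_u^{\text{new}}:=T'_u\cap\T$ for~$u\in U$ as the candidate pinned representation of~$G[U]$ on~$\T$. For every~$u\in U$, the edges~$us_1^u,us_2^u\in E$ together with~$T_{s_1^u}\cap T_{s_2^u}=\varnothing$ force the connected set~$T'_u$ to contain the whole path between~$T_{s_1^u}$ and~$T_{s_2^u}$ in~$\T'$; this path lies inside~$\T$ and passes through~$x_u$, so~$x_u\in T_u^{\text{new}}$, and in particular each~$T_u^{\text{new}}$ is a non-empty subtree of~$\T$. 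For distinct non-adjacent~$u_1,u_2\in U$, disjointness is immediate from $T_{u_1}^{\text{new}}\cap T_{u_2}^{\text{new}}\subseteq T'_{u_1}\cap T'_{u_2}=\varnothing$.

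The step I expect to be the main (mild) obstacle is showing that adjacent pairs $u_1u_2\in E$ still intersect \emph{inside}~$\T$ and not only somewhere in~$\T'$: a priori a witness of $T'_{u_1}\cap T'_{u_2}$ could lie in~$\T'\setminus\T$. I would handle this by a projection argument. Pick any~$p\in T'_{u_1}\cap T'_{u_2}$, and let~$q$ be the unique point of~$\T$ at which the path from~$p$ to~$\T$ in~$\T'$ first enters~$\T$ (well-defined because $\T$ is a subtree of the tree~$\T'$). For each~$i\in\{1,2\}$, the subtree~$T'_{u_i}$ is connected, contains~$p$, and also contains~$x_{u_i}\in\T$ by the previous paragraph; hence it contains the entire path~$[p,x_{u_i}]$ in~$\T'$, which passes through~$q$. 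Therefore $q\in T'_{u_1}\cap T'_{u_2}\cap\T=T_{u_1}^{\text{new}}\cap T_{u_2}^{\text{new}}$, as required. Everything else (the bound on~$|\G|$, the existence of the witnessing pair for each~$u\in Y_S$, the subtree property of $T_u^{\text{new}}$) is routine from the definitions and the Helly property of subtrees in a tree.
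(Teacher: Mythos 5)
Your proof is correct and follows essentially the same strategy as the paper: define $\G$ by choosing one unavoidable gate per non-edge of $G[S]$, pin each $u\in Y_S$ to the gate of a witness non-edge in $N(u)\cap S$, and restrict the extended subtrees to $\T$. The only cosmetic differences are that you pick an interior point of the gap between $T_{s_1}$ and $T_{s_2}$ whereas the paper takes the boundary point of $T_{s_1}$ nearest $T_{s_2}$, and you verify intersections within $\T$ via an explicit projection argument where the paper invokes Proposition~\ref{prop:nonempty tree intersection} (the Helly property); both are interchangeable.
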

\begin{proof}
  Let us first construct~$\G$. For every non-edge~$ab$ of~$G[S]$, the trees~$T_a$ and~$T_b$ do not intersect because~$(T_s)_{s\in S}$ is a chordal representation of~$G[S]$. Let~$y_{ab}$ be the point of~$T_a$ that is the closest to~$T_b$ in the topological space~$\T$. Note that since~$T_{a}$ and~$T_{b}$
  are subtrees of the tree~$\T$,
  every path from a point of~$T_{a}$ to a point of~$T_{b}$ contains~$y_{ab}$.
  Now define $\G = \sst{y_{ab}}{\text{$ab$ is a non-edge of~$G[S]$}}$.
  Since $G[S]$ has at most~$\binom{|S|}{2}$ non-edges, it holds that~$|\G|\leq\binom{|S|}{2}$.
  
  We are now ready to define~$(x_v)_{v\in Y_S}$.
  By definition of~$Y_S$,
  there is a non-edge~$a_vb_v$ of~$G$
  with~$a_v,b_v\in S$  for every~$v\in Y_S$.
  We then define~$x_v=y_{a_vb_v}$.

  It remains to show that~$(x_v)_{v\in Y_S}$ has the required property.
  Take~$U\subseteq Y_S$
  and assume that~$(T_s)_{s\in S}$
  extends to a chordal representation~$(T_s)_{s\in S\cup U}$ of~$G[U\cup S]$ 
  (so we the same trees associated to the elements of~$S$)
  on a tree~$\T'$ that extends~$\T$.

  We first claim that~$x_u\in T_u$ for every~$u\in Y_S\cap U$.
  Indeed, $ua_u$ and~$ub_u$ are edges of~$G$,
  so~$T_u$ intersects both~$T_{a_u}$ and~$T_{b_u}$.
  In particular, $T_u$ contains a path from a point of~$T_{a_u}$
  to a point of~$T_{b_u}$.
  This implies that $T_u$ contains the point~$y_{a_ub_v}=x_u$.

  Define~$T_u'=T_u\cap \T'$ for every~$u\in U$.
  As previously prove, it holds that~$x_u\in\T$, so~$x_u$ belongs to~$T_u\cap\T=T_u'$. This in particular implies that $T_u\cap\T$ is non-empty.
  Consequently, Proposition~\ref{prop:nonempty tree intersection} applies to~$G[U]$ and shows that the family~$(T_u')_{u\in U}$ is a chordal representation of~$G[U]$.
\end{proof}
\begin{lemma}\label{lem:test pinned}
  Let~$\epsilon > 0$ and let~$k$ be a integer.
  Fix a tree~$\T$ with at most~$k$ leaves and a set~$\G_0\subseteq\T$
  of size at most~$k$.
  Then for every graph~$G$ on~$V$
  and values~$(x_v)_{v\in V}$ of~$\G_0$, one of the following holds
  \begin{itemize}
  \item $G$ is $\epsilon$-close to a chordal graph; or
  \item $G[X]$ is not a $(x_v)_{v\in X}$-pinned chordal graph
    with probability at least~$\frac{1}{2}$,
  \end{itemize}
  where $X$ is chosen uniformly at random
  among subsets of~$V$
  of size
  $m_{\ref{lem:test pinned}}(\epsilon,k)=2^{40}\epsilon^{-4}k^6\ln^{4}k$.
\end{lemma}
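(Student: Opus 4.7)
The plan is to reduce testing $(x_v)_{v\in V}$-pinned chordality to two testable subproblems already handled earlier in the paper: a set coloring problem in the sense of Theorem~\ref{thm:coloring}, handling the discrete ``combinatorial type'' of each subtree relative to $\T$, and, for each segment of $\T$, an $M_2$-free bipartite test in the sense of Theorem~\ref{thm:testability of M2-free graphs}, handling the continuous endpoint positions of the subtrees on that segment.

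First I would enrich $\G_0$ with the leaves and branch points of $\T$ into a set $\mathcal{L}$ of \emph{landmarks} of size $O(k)$, which cuts $\T$ into a set $S$ of $O(k)$ maximal landmark-free segments. Each pinned subtree $T_v$ then decomposes into: (i)~$\lambda(v) := T_v \cap \mathcal{L}$, a connected subset of $\mathcal{L}$ containing $x_v$; (ii)~for each segment $e$ with exactly one endpoint in $\lambda(v)$, a flag saying whether $T_v$ spills past that endpoint into the interior of $e$; (iii)~for each such spilled segment, the precise point where $T_v$ terminates. The pair~(i)--(ii) is the \emph{combinatorial type} $\phi(v)$ of $v$ and is naturally encodable as a subset of a universe $\Omega$ of size $O(k)$; item~(iii) is the continuous data that will be absorbed by the $M_2$-free test.

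Next I would set up a set coloring problem whose colors are the valid combinatorial types, with constraints $m_{uv}, M_{uv}$ enforcing: (a)~if $\lambda(u)\cap\lambda(v)\ne\varnothing$ then $uv\in E(G)$ is forced; (b)~if $\lambda(u),\lambda(v)$ are separated by at least two landmarks along their path in $\T$ then $uv\notin E(G)$ is forced; (c)~in the remaining case where exactly one segment $e$ separates them, $uv\in E(G)$ forces both $u$ and $v$ to spill into $e$ from their respective sides. The key observation is that once a coloring $\phi$ is conflict-free in this sense, the remaining realizability constraints collapse, by Theorem~\ref{thm:structure of M2-free graphs}, to requiring that the bipartite graph $G[L_e^\phi, R_e^\phi]$ of left/right spillers be $M_2$-free for every segment $e$. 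Hence if $G$ is $\epsilon$-far from $(x_v)_{v\in V}$-pinned chordal, then every coloring $\phi$ either exhibits at least $\epsilon n^2/2$ set-coloring conflicts or contains some segment $e$ whose bipartite spillers graph is at least $\epsilon n^2/(2|S|)$-far from $M_2$-free.

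Finally, in the spirit of the two-stage Alon--Krivelevich argument that underlies Theorem~\ref{thm:coloring}, I would sample $X=X_1\cup X_2$ in two stages. After $X_1$ is revealed, one enumerates its at most $2^{O(k|X_1|)}$ candidate colorings $\phi_{X_1}$; each greedy extension of $\phi_{X_1}$ to $V$ inherits a large defect of one of the two types, which is then detected by $X_2$ with probability close to $1$ via Theorem~\ref{thm:coloring} (for a set-coloring defect) or Theorem~\ref{thm:testability of M2-free graphs} applied to the offending bipartite subgraph (for an $M_2$-defect). A union bound over the $O(k)$ segments and the $2^{O(k|X_1|)}$ colorings of $X_1$ yields the stated $m_{\ref{lem:test pinned}}(\epsilon,k)$. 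The main obstacle is parameter balance: $|X_1|$ must be large enough that extensions reliably inherit the defect, yet small enough that $2^{O(k|X_1|)}$ is absorbed by the failure probability of $X_2$; this tension, together with the need for $X_2$ to simultaneously handle all $O(k)$ per-segment $M_2$-free tests at effective precision $\epsilon/k$, is what pushes the bound to $\epsilon^{-4}k^6\ln^4 k$ rather than the $\epsilon^{-2}k^2\ln k$ of the plain coloring theorem.
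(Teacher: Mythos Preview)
Your high-level decomposition --- enriching $\G_0$ with leaves and branch points into a set of $O(k)$ gates, encoding the combinatorial type of each subtree as a subset of gates, expressing the global constraints as a set coloring problem (Theorem~\ref{thm:coloring}), and handling each maximal segment via the $M_2$-free bipartite structure (Theorems~\ref{thm:structure of M2-free graphs} and~\ref{thm:testability of M2-free graphs}) --- is exactly the paper's decomposition. (The paper does not carry your explicit ``spill flags'' in the colour: it simply sets $\phi(u)=T_u\cap\G$ and lets the segment-level $M_2$-test absorb whether and how far $T_u$ extends into an adjacent segment.)

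Where you diverge is the sampling strategy, and here your sketch has a gap. You propose a two-stage sample $X=X_1\cup X_2$ with a union bound over the $2^{O(k|X_1|)}$ colourings of $X_1$. The paper avoids this entirely by performing the dichotomy \emph{on $V$}, not on the sample: either every colouring of $V$ has at least $\delta n^2$ conflicting pairs --- in which case Theorem~\ref{thm:coloring} applied to a \emph{single} sample $X$ of size $m_{\ref{thm:coloring}}(\delta,3k)$ shows $G[X]$ has no proper colouring, hence no pinned representation --- or there is one fixed colouring $\phi$ of all of $V$ with at most $\delta n^2$ conflicts. In this second case the sets $L_i,R_i$ are determined once and for all by this $\phi$, and the paper shows that an $M_2$ in $G[L_i\cap X,R_i\cap X]$ together with the absence of $\phi$-conflicting pairs in $X$ forces an induced $C_4$ in $G[X]$. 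Choosing $\delta$ small enough ($\delta=1/(3y^2)$ with $y=m_{\ref{thm:testability of M2-free graphs}}(\epsilon/6k)$) makes a random $X$ conflict-free with high probability, so both tests run on the same sample. If neither branch fires, the paper glues the near-$M_2$-free bipartite pieces onto the tree $T_{\phi(u)}$ spanned by the gates to build an actual chordal graph $F$ close to $G$ --- which is why the lemma concludes ``$G$ is $\epsilon$-close to chordal'', not ``close to $(x_v)$-pinned chordal'' as you wrote. Your two-stage enumeration is not obviously unworkable, but you have not explained how a defect detected in $X_2$ \emph{relative to a greedy extension of some colouring of $X_1$} certifies that $G[X]$ itself has no pinned representation; the paper's dichotomy on $V$ sidesteps this issue completely and is what makes the bookkeeping come out to $\epsilon^{-4}k^6\ln^4 k$.
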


\begin{proof}
  \resetClaimCounter
  The idea of the proof is the following.
  We first cut the tree~$\T$ into small linear sections.
  Instead of directly testing if there is a $(x_v)_{v\in V}$-pinned
  representation~$(T_v)_{v\in V}$ of~$G$,
  we first test if it is possible to attribute to every vertex~$u$
  the general shape~$\phi(u)$ of a tree~$T_u$,
  essentially defined by the set of sections the tree~$T_u$ touches.
  The problem of finding this shapes and the natural constraints
  associated to them
  (such as the fact that $uv$ is an edges, $T_u$ and~$T_v$
  have to touch a common region)
  can be expressed in terms of a set coloring problem
  as defined in Section~\ref{sec:coloring} and is therefore easy to test.
  Then assuming that such a~$\phi$ exists (with some controlled error),
  we try to construct a family of trees~$T_u$
  by looking the shapes~$\phi(u)$ into a tree by analyzing what happens
  locally.
  This allows us to test the general structure of~$G$.
  Do do so, we need to test the "local" structure of a family of trees~$(T_v)_{v\in V}$.
  When restricting to linear sections of~$\T$,
  a chordal representation of~$(T_v)_{v\in V}$ has to be close to a $M_2$-free graph.
  This property can be tested using Theorem~\ref{thm:testability of M2-free graphs}.
  Assuming that none of these tests fail,
  we then show that it is possible to construct the family of trees~$(T_v)_{v\in V}$
  by gluing the $M_2$-free bipartite graphs obtained on small section to the
  general shapes.

  In the course of the proof, we will need the values
  $y=m_{\ref{thm:testability of M2-free graphs}}(\frac{\epsilon}{6k})$
  and~$\delta=\frac{1}{3 y^2}$.
  The reason of these choices appears in Claim~\ref{claim:close to local interval}.
  The final query complexity obtained is
  $|X|=m_{\ref{lem:test pinned}}(\epsilon,k)=
  \max(m_{\ref{thm:coloring}}(\delta,3k), 4y)=
  m_{\ref{thm:coloring}}(\delta,3k)$.
  This is needed in the proof of Claims~\ref{claim:coloring disproof}
  and~\ref{claim:close to local interval}.
  Estimating~$m_{\ref{lem:test pinned}}(\epsilon,k)$ using the values in
  Lemma~\ref{lem:test pinned} and
  Theorems~\ref{thm:coloring} and~\ref{thm:testability of M2-free graphs} gives
  \begin{align*}
    m_{\ref{lem:test pinned}}(\epsilon,k)
    &=36(3k)^2\delta^{-2} = 36\cdot 3^4\cdot k^2y^4
      =36\cdot3^4\cdot k^2\cdot
      \left(4\frac{6 k}{\epsilon}\ln\frac{6k}{\epsilon}\right)^4\\
    &<
      2^{14}\cdot 3^{10}\cdot\epsilon^{-4}\cdot k^6\left(5\ln k\right)^4
    \\
    &< 2^{40}\cdot\epsilon^{-4}\cdot k^6\cdot\ln^4 k.\\
  \end{align*}
  We start by defining sections of~$\T$ by a set of~\emph{gates}.
  \begin{claim}\label{claim:gates}
    There is a set~$\G$ with~$\G_0\subseteq\G\subseteq\T$ of size at most~$3k$
    such that every connected component of~$\T\setminus\G$
    is an open segment~$(g_1,g_2)$ with~$g_1,g_2\in\G$.
  \end{claim}
  \begin{proof}
    Let~$\G_1$ be the set of leaves of~$\T$.
    We know by assumption that~$|\G_1|\leq k$.
    Since~$\T$ is a tree, there is a set~$\G_2\subseteq\T$
    of size at most~$|\G_1|-1$ such that
    every connected component of~$\T\setminus\G_1$
    is a topological segment.
    It suffices to set~$\G=\G_0\cup\G_1\cup\G_2$.
    Moreover, $|\G|\leq |\G_0|+|\G_1|+|\G_2|\leq 3k$.
  \end{proof}
  Let~$\T_1,\dots,\T_q$ be the connected components of~$\T\setminus\G$.
  Note that the section~$T_i$ can be seen as the edges of a tree with
  vertices~$\G$, so~$q=|\G|-1< 3k$.

  Let us construct a set coloring problem as defined in Section~\ref{sec:coloring}.
  To every vertex of~$u\in V$, we want to attribute
  a set of gates $\phi(u)\subseteq\G$ that corresponds to the set
  of gates that a subtree~$T_u$ in a representation~$(T_v)_{v\in V}$
  could.
  Informally speaking, $\phi(u)$ gives the general shape of~$T_u$.
  
  For every~$v\in V$, define
  \[
    L_v=\sst{\G\cap T}{\text{$T$ is a subtree of~$\T$ with~$x_v\in T$}}.
  \]
  Note that~$x_v\in L_v$ because~$x_v\in\G$ for every vertex~$v$.
  Let us define the associated constraints functions~$(m_{uv})$ and~$(M_{uv})$.
  Let $u,v\in V$ be distinct vertices.
  Let~$P_{uv}=\G\cap [x_u, x_v]$ be the set
  of gates on the path from~$x_u$ to~$x_v$ on~$\T$.

  Now define
  \begin{itemize}
  \item
    $m_{uv}(T) = P_{uv} \setminus T$
    and $M_{uv}(T) = \G$
    if~$uv\in E(G)$; and
  \item
    $m_{uv}(T) = \varnothing$ and
    $M_{uv}(T) = \G \setminus T$ if~$uv\notin E(G)$.
  \end{itemize}
  This definition is designed so that
  for a coloring~$\phi$, the relation
  \begin{equation}\label{eq:valid pair:2}
    m_{uv}(\phi(u))\subseteq \phi(v)\subseteq M_{uv}(\phi(u)).
  \end{equation}
  is satisfied
  if and only if~$P_{uv}\subseteq\phi(u)\cup\phi(v)$ when~$uv\in E(G)$;
  and if and only if~$\phi(u)\cap\phi(v)=\varnothing$
  when~$uv\in E(G)$.
  \begin{claim}\label{claim:pinned implies colorable}
    If~$G[X]$ has a $(x_v)_{v\in X}$-pinned representation,
    then $X$ has a proper coloring.
  \end{claim}
  \begin{proof}
    Let~$(T_v)_{v\in X}$ be a $(x_v)_{v\in X}$-pinned representation
    of~$G[X]$ on~$\T$.
    For every~$u\in X$, define
    \[
      \phi(u)=T_u\cap\G.
    \]
    The set~$T_u$ is a subtree of~$\T$ that contains~$x_u$, so
    it follows from the definitions that~$\phi(u)\in L_u$.
    To prove the claim, it remains to show
    that the coloring~$\phi$ is proper.
    Consider a pair $u,v\in X$ of distinct vertices and
    let us check that~\eqref{eq:valid pair:2} holds.

    If~$uv\in E(G)$, Relation~\eqref{eq:valid pair:2}
    is equivalent to~$P_{uv} \subseteq \phi(v) \cup \phi(u)$.
    Since~$T_u$ and~$T_v$ intersect, $x_u\in T_u$ and~$x_v\in T_v$,
    it holds that~$T_u\cup T_v \supseteq [x_u,x_v]$.
    Taking the intersection with~$\G$,
    it indeed gives~$\phi(u)\cup \phi(v) \supseteq P_{uv}$.

    If~$uv\notin E(G)$,
    it suffices to check that~$\phi(v)\cap\phi(u)=\varnothing$.
    It holds that~$T_u\cap T_v=\varnothing$ because $uv$ is not an edge of~$G$.
    It directly follows that~$\phi(u)$ and~$\phi(v)$ do not intersect
    as~$\phi(u)\subseteq T_u$ and~$\phi(v)\subseteq T_v$.
  \end{proof}
  \begin{claim}\label{claim:coloring disproof}
    If every coloring of~$G$
    has at least $\delta n^2$ conflicting pairs,
    then~$G[X]$ is not~$(x_v)_{v\in X}$-pinned
    with probability~$\frac{1}{2}$.
  \end{claim}
  \begin{proof}
    Recall that~$|\G|\leq 3k$ and~$|X|\geq m_{\ref{thm:coloring}}(\delta,3k)$.
    The claim then follows from Claim~\ref{claim:pinned implies colorable}
    and Theorem~\ref{thm:coloring}.
  \end{proof}
  By Claim~\ref{claim:coloring disproof},
  the theorem holds if
  every coloring of~$V$ has at least~$\delta n^2$ conflicting edges.
  In the following, we assume that there is a coloring~$\phi:V\to 2^{\G}$
  of~$V$ with at most~$\delta n^2$ conflicting edges.
  
  Given a section~$\T_i=(g_1,g_2)$, we consider the following sets.
  \begin{itemize}
  \item The set~$L_i$ of vertices~$u\in V$
    with~$g_1\in\phi(u)$ and~$g_2\notin\phi(u)$; and
  \item the set~$R_i$ of vertices~$u\in V$
    with~$g_1\notin\phi(u)$ and~$g_2\in\phi(u)$.
  \end{itemize}
  We then aim to apply Theorem~\ref{thm:testability of M2-free graphs}
  to the bipartite graph~$G[L_i,R_i]$.
  
  \begin{claim}\label{claim:test LMR}
    Let~$X\subseteq V$ and~$i\in[q]$.
    If the bipartite graph~$G[L_i\cap X, R_i\cap X]$ contains a~$M_2$
    and~$X$  contains no conflicting pair of~$\phi$,
    then~$G[X]$ contains an induced~$C_4$.
  \end{claim}
  \begin{proof}
    Assume that~$G$ has a~$M_2$ between in~$(L_i\cap X,R_i\cap X)$
    induced by the set~$\{\ell_1,\ell_2,r_1,r_2\}$
    with~$\ell_1,\ell_2\in L_i$ and~$r_1,r_2\in R_i$.
    Then by the definition of~$L_i$,
    both~$\phi(\ell_1)$ and~$\phi(\ell_2)$ contain~$g_1$.
    In particular~$\phi(\ell_1)\cap\phi(\ell_2)$ is non-empty.
    Since~$\ell_1\ell_2$ is not a conflicting pair,
    it follows from~\eqref{eq:valid pair:2}
    that~$\ell_1\ell_2$ is an edge of~$G$.
    A symmetric argument shows that~$r_1r_2$ is also an edge of~$G$.
    It follows that~$\{\ell_1,\ell_2,r_1,r_2\}$ induces a~$C_4$ in~$G$.
  \end{proof}

  \begin{claim}\label{claim:close to local interval}
    Let~$i\in[q]$.
    If one has to add/delete at least $\frac{\epsilon}{2q}n^2$ edges to
    the bipartite graph~$G[L_i,R_i]$ to make it $M_2$-free,
    then with probability at least~$\frac{1}{2}$,
    $G[X]$ is not chordal.
  \end{claim}
  \begin{proof}
    We actually show that~$G[X]$ contains an induced~$C_4$ with good probability
    using Claim~\ref{claim:test LMR}.
    
    Let~$Y$ be a set of vertices chosen uniformly at random among subsets of~$V$
    of size
    $y=m_{\ref{thm:testability of M2-free graphs}}(\frac{\epsilon}{2\cdot 3k})\geq m_{\ref{thm:testability of M2-free graphs}}(\frac{\epsilon}{2q})$.
    The probability that~$Y$ contains a conflicting edge of~$\phi$
    is bounded from above by the average number of conflicting edges in~$Y$,
    so
    \[
      \Pr(\text{$Y$ contains a conflicting edge of~$\phi$})
      \leq \frac{\binom{y}{2}}{\binom{n}{2}}\cdot\delta n^2<
      \delta y^2\leq\frac{1}{3}
    \]
    since~$\delta$ is appropriately
    defined as~$\delta=\frac{1}{3y^2}$.
    
    By Theorem~\ref{thm:testability of M2-free graphs},
    the bipartite graph~$G[L_i\cap Y, R_i\cap Y]$
    contains an induced~$M_2$ with probability at least~$\frac{1}{2}$.
    Applying Claim~\ref{claim:test LMR} and by the union bound,
    \[
      \Pr(\text{$G[Y]$ contains no $C_4$}) \leq \frac{1}{2}+\frac{1}{3}
      =\frac{5}{6}.
    \]
    It remains to amplify this result.
    Consider four independent random sets
    of vertices~$Y_1,\dots,Y_4$ in~$X$ each distributed as uniformly
    chosen among subsets of~$V$ of size~$y$.
    This is possible because~$4y\leq |X|$.
    The probability that for every~$i\in\{1,\dots,4\}$,
    the graph~$G[Y_i]$ has no induced~$C_4$
    is at most~$\left(\frac{5}{6}\right)^4<\frac{1}{2}$.
  \end{proof}
  By Claim~\ref{claim:close to local interval},
  we may assume that for every~$i$
  there is a $M_2$-free bipartite graph~$H_i$ on~$L_i\cup R_i$
  that differs from~$G[L_i,R_i]$ in at most~$\frac{\epsilon}{2q}n^2$
  pairs.
  By Theorem~\ref{thm:structure of M2-free graphs}\eqref{it:intervals},
  $H_i$ is the intersection graph of a family~$(I_u^i)_{u\in L_i\cup R_i}$
  of intervals of~$[0,1]$ with $0\in I_u^i$ for every~$u\in L_i$
  and~$1\in I_u^i$ for every~$u\in R_i$.
  Assuming that~$\T_i$ is the open segment~$(g_1,g_2)$ with $g_1,g_2\in\G$,
  we map these intervals to~$[g_1,g_2]$ by an
  homeomorphism~$f_i:[0,1]\to[g_1,g_2]$ such that~$f_i(0)=g_1$ and~$f(1)=g_2$
  This gives a
  family~$(f_i(I^i_u))_{u\in L_i\cup R_i}$ of paths of~$\T$
  of the form~$f_i(I^i_u)=[\ell_u^i,r_u^i]$ with~$\ell_u^i,r_u^i\in [g_1,g_2]$
  that satisfies the following properties
  \begin{itemize}
  \item $H_i$ is the intersection graph of the
    family~$([\ell_u^i,r_u^i])_{u\in L_i\cup R_i}$;
  \item $\ell_u^i=g_1$ whenever~$u\in L_i$; and
  \item $r_u^i=g_2$ whenever~$u\in R_i$.
  \end{itemize}
  We now construct a family of trees~$(T_u)_{u\in V}$ by gluing
  the previously defined elements as follows.
  For~$u\in V$,
  let~$T_{\phi(u)}$ be the minimal subtree of~$\T$ that contains~$\phi(u)$,
  then define
  \[
    T_u = T_{\phi(u)}\cup \left(\bigcup_i~[\ell_u^i,r_u^i]\right),
  \]
  where the union is taken over all indices~$i$
  such that~$u\in L_i\cup R_i$.
  
  Let us check that~$T_u$ is a subtree of~$\T$.
  If~$u\in L_i\cup R_i$, then the interval~$[\ell_u^i,r_u^i]$ contains an element of~$\phi(u)\subseteq\T_{\phi(u)}$ (this element is $\ell_u^i$ if~$u\in L_i$ and~$r_u^i$ if $u\in R_i$), so~$T_u$ is a connected part of~$\T$, and further a tree.
  
  Let~$F$ be the chordal graph that is the intersection graph of the family~$(T_u)_{u\in V}$. It remains to show that~$G$ is~$\epsilon$-close to~$F$.
  \begin{claim}\label{claim:error pairs}
    If a pair~$uv$ is in~$E(F)\triangle E(G)$,
    then one of the following situations occurs:
    \begin{enumerate}
    \item\label{it:error:phi}
      $uv$ is a conflicting pair of~$\phi$; or
    \item\label{it:error:Hi}
      $uv$ is on edges
      of~$E(H_i)\triangle E(G[L_i,R_i])$ for some~$i\in[q]$.
    \end{enumerate}
  \end{claim}
  \begin{proof}
    First note that the bipartite graph~$F[L_i,R_i]$
    is identical to~$H_i$ for every~$i\in[q]$.
    To see this, it suffices to check that for
    every~$u\in L_i$ and~$v \in R_i$, the trees~$T_u$ and~$T_v$
    intersects if and only
    if~$[\ell_u^i,r_u^i]$ and~$[\ell_u^i,r_u^i]$ intersects.
    It follows from the definitions
    that for every~$w\in L_i\cup R_i$
    the tree~$T_{\phi(w)}$ does not intersect~$\T_i$,
    so~$T_w\cap\T_i=[\ell_w^i,r_w^i]$.
    If~$u\in L_i$ and~$v\in R_i$, the trees~$T_{u}$ and~$T_{v}$
    can only intersect on~$\T_i$, so
    $T_u\cap T_v=[\ell_u^i,r_u^i]\cap[\ell_u^i,r_u^i]$.
    
    As a consequence, Situation~\ref{it:error:Hi} covers every case
    where~$(u,v)$ is a pairs of~$L_i\times R_i$ for some~$i\in[q]$. 
    
    If~$\phi(u)$ and~$\phi(v)$ intersect in~$g\in\G$,
    then by construction~$T_u$ and~$T_v$ intersects in the same element~$g$,
    so~$uv\in E(F)$.
    If~$uv\notin E(G)$ then
    the constraint~\eqref{eq:valid pair:2},
    that implies~$\phi(v)\cap\phi(u)=\varnothing$,
    is not satisfied.
    Further, $uv$ is a conflicting pair of~$\phi$.
    We now assume that~$\phi(u)\cap\phi(v)=\varnothing$.

    If an element~$g_1$ of~$\phi(u)$ is adjacent of an element~$g_2$
    of~$\phi(v)$ in the sense that~$(g_1,g_2)$ correspond to a section~$\T_i$,
    then (up to switching the roles played by~$u$ and~$v$)
    $u\in L_i$ and~$v\in R_i$.
    We now assume that~$\phi(u)$ has no adjacent element in~$\phi(v)$.
    
    Recall that~$P_{uv}=[x_u,x_v]\cap\G$, so in this case there is a gate
    $g$ is the path~$P_{uv}$ that does not belong to~$\phi(u)\cup\phi(v)$.
    Further, the gate~$g$
    separates the trees~$T_u$ and~$T_v$ in~$\T$,
    so~$uv\notin E(F)$.
    Moreover, $P_{uv}\nsubseteq \phi(u)\cup\phi(v)$, so
    it follows from the definition of~$m_{uv}$
    that $uv$ is a conflicting pair of~$\phi$ unless~$uv\in E(G)$.
  \end{proof}
  It remains to use Claim~\ref{claim:error pairs}
  to estimate the number of pairs in~$|E(F)\triangle E(G)|$.
  The number of pairs in Situation~\ref{it:error:phi}
  is at most~$\delta n^2$;
  the number of pairs in Situation~\ref{it:error:Hi}
  is at most~$\frac{\epsilon}{2q}n^2$ for each~$i\in[q]$
  and therefore at most~$\frac{\epsilon}{2}n^2$ in total.
  It follows that
  \begin{align*}
    |E(F)\triangle E(G)|
    \leq \delta n^2 + \frac{\epsilon}{2}n^2
    \leq \epsilon n^2.
  \end{align*}
\end{proof}

\section{Proof of Theorem~\ref{thm:main}}
\label{sec:main proof}

  \resetClaimCounter
  Assume that~$G$ is $\epsilon$-far from being a chordal graph and let
  us prove that when~$\epsilon$ is small enough,
  a set~$U$ chosen uniformly at random
  among subsets of~$V$ of size~$m:=\epsilon^{-25}$ induces a graph~$G[U]$
  that is not chordal.
  
  We plan to apply Lemma~\ref{lem:nearly simplicial}
  with parameter~$\delta$ such that $6\delta^{1/2}=\frac{\epsilon}{2}$.
  To this end, set~$\delta=\frac{\epsilon^2}{144}$.
  Define
  $X=\sst{u\in V}{p_G(u)< \frac{\delta}{2}\cdot n^2}$
  and $Y=V\setminus S$.
  We first prove that~$G[Y]$ is far from being an interval graph.
  \begin{claim}\label{claim:GY delta far}
    Adding/deleting $\frac{\delta}{2}n^2$ or less
    edges to~$G[Y]$ does not make it a chordal graph.
  \end{claim}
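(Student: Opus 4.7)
The plan is a proof by contradiction that reduces to Lemma~\ref{lem:nearly simplicial}. Assume that $G[Y]$ can be made chordal by adding or removing at most $\frac{\delta}{2}n^2$ edges within $\binom{Y}{2}$. I would fix such a modification set and apply exactly these changes to $G$ to obtain a graph $G'$ on $V$. By construction $G'[Y]$ is chordal, and no edge of $G$ incident to a vertex of $X$ has been touched.

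The key step is to bound $p_{G'}(v)$ for $v\in X$. Since no edge incident to $v$ was modified, $N_{G'}(v)=N_G(v)$, and the number of non-edges inside this neighborhood can differ from $p_G(v)$ only because of modifications falling inside $N_G(v)\cap\binom{Y}{2}$, of which there are at most $\frac{\delta}{2}n^2$ in total. Hence
\[
  p_{G'}(v) \;\leq\; p_G(v) + \tfrac{\delta}{2}n^2 \;<\; \delta n^2
\]
for every $v\in X$, where the strict inequality uses the definition of $X$.

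At this stage the graph $G'$ fits the hypotheses of Lemma~\ref{lem:nearly simplicial} with parameter $\delta$: the partition $X\cup Y$ is such that $G'[Y]$ is chordal and every $v\in X$ satisfies $p_{G'}(v)\leq \delta n^2$. (The assumption $n\geq \delta^{-1}=144\epsilon^{-2}$ is automatic for $\epsilon$ small enough, since we are ultimately taking $n\geq m=\epsilon^{-25}$.) The lemma then yields that $G'$ is $6\delta^{1/2}$-close to chordal, and the calibration $\delta=\epsilon^2/144$ was chosen precisely so that $6\delta^{1/2}=\epsilon/2$. Combining this with $|E(G)\triangle E(G')|\leq \frac{\delta}{2}n^2 < \frac{\epsilon}{2}n^2$ gives
\[
  \operatorname{dist}(G,\text{chordal})
  \;\leq\; \tfrac{\delta}{2}n^2 + \tfrac{\epsilon}{2}n^2
  \;<\; \epsilon n^2,
\]
contradicting the assumption that $G$ is $\epsilon$-far from being chordal.

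The argument is essentially bookkeeping and I do not expect any serious obstacle. The one point that must be handled cleanly is the insistence that the initial modification set lie entirely inside $\binom{Y}{2}$: this is what keeps the neighborhoods of vertices of $X$ intact and ensures that the increase in $p$-values is bounded by the total number of modifications, so that Lemma~\ref{lem:nearly simplicial} genuinely applies to $G'$.
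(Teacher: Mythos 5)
Your proof is correct and follows essentially the same path as the paper: assume a small set of modifications inside $\binom{Y}{2}$ makes $G[Y]$ chordal, observe that for $v\in X$ the quantity $p_{G'}(v)$ increases by at most the number of modifications (since neighborhoods of $X$-vertices are untouched), apply Lemma~\ref{lem:nearly simplicial} with parameter $\delta$, and combine with $|E(G)\triangle E(G')|\leq\frac{\delta}{2}n^2$ to contradict $\epsilon$-farness. The only cosmetic difference is that the paper tracks the increase in $p_{G'}(v)$ via $|E(G)\setminus E(G')|$ (only deletions can create new non-edges in a fixed neighborhood), while you use the total number of modifications; both give the same numerical bound.
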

  \begin{proof}
    Assume for a contradiction that it is possible
    to add/delete at most~$\frac{\delta}{2}n^2$
    edges of~$G[Y]$ to~$G$ to obtain a graph~$G'$ on~$V$
    such that~$G'[Y]$ is chordal (so~$G$ and~$G'$ differs only inside the vertex set~$Y$).

    An induced copy of~$P_3$ with middle vertex~$u\in X$ in the graph~$G'$
    is either an induced~$P_3$ of~$G$ with middle vertex~$u$ or is the union of~$u$
    and a pair of~$E(G)\setminus E(G')$.
    As a consequence,
    \[
      p_{G'}(u)\leq p_G(u) + |E(G)\setminus E(G')|\leq
      \frac{\delta}{2}n^2 + \frac{\delta}{2}n^2\leq \delta n^2
    \]
    for every~$u\in X$.
    By Lemma~\ref{lem:nearly simplicial},
    the graph~$G'$ is therefore $\frac{\epsilon}{2}$-close to a chordal graph~$G''$.
    Since~$\frac{\epsilon}{2}+\frac{\delta}{2}<\epsilon$,
    it follows that~$G$ is $\epsilon$-close to~$G''$.
    As~$G''$ is chordal, this yields a contradiction.
  \end{proof}
  Let~$S$ be a subset of~$V$.
  Let~$Y_S$ be the set defined as in Lemma~\ref{lem:YS} by
  $Y_S=\sst{u\in V}{\text{$N(u)\cap S$ is not a clique of~$G$}}$.
  Let~$\R(S)$ be the set of minimal chordal representations of~$G[S]$, considered up to homeomorphism.
  Recall that an element~$R$ of~$\R(S)$ is a family~$R=(T_u)_{u\in S}$ of
  subtrees of a topological tree~$\T_R$.
  By Lemma~\ref{lem:chordal repr}, we know that
  that~$|\R(S)|\leq m_{\ref{lem:chordal repr}}(s)$.
  
  Fix a chordal representation~$R\in\R(S)$.
  By Lemma~\ref{lem:YS}
  there is a set~$\G_R\subseteq\T_R$ of size at most~$\binom{|S|}{2}$ and a
  family~$(x_{v}^R)_{v\in Y_S}$ of elements of~$\G_R$
  such that for every~$U\subseteq V$,
  if~$G[(U\cap Y_S)\cup S]$ has a chordal representation
  that extends~$R$ on a tree~$\T'$ that extends~$\T_R$ then
  the graph~$G[U\cap Y_S]$
  has a $(x_v^R)_{v\in U \cap Y_S}$-pinned representation on~$\T_R$.

  \begin{claim}\label{claim:GU to xR}
    Fix two sets of vertices~$S$ and~$U$.
    If~$G[S\cup U]$ is a chordal graph then there exists~$R\in\R(S)$
    such that~$G[U\cap Y_S]$
    has a~$(x_v^R)_{v\in U\cap Y_S}$-pinned representation on~$\T_R$.
  \end{claim}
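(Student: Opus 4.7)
The plan is to build a specific minimal chordal representation $R$ of $G[S]$ by restricting and then shrinking a chordal representation of $G[S\cup U]$, and then to verify that $R$ extends back to a chordal representation of $G[(U\cap Y_S)\cup S]$ on a tree extending~$\T_R$; Lemma~\ref{lem:YS}, applied with $U\cap Y_S\subseteq Y_S$ in place of~$U$, will then deliver the conclusion.

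Since $G[S\cup U]$ is chordal, I fix any chordal representation $(\hat T_v)_{v\in S\cup U}$ of it on a topological tree~$\hat\T$. The restriction $(\hat T_v)_{v\in S}$ is a chordal representation of $G[S]$ on~$\hat\T$. By iteratively removing small open neighborhoods of leaves of the current tree that are not singleton subtrees $\hat T_v\cap\hat\T=\{\ell\}$ for some $v\in S$---as in the proof of Proposition~\ref{prop:leaf}, using Proposition~\ref{prop:nonempty tree intersection} to keep the representation valid---one reaches a subtree $\T_R\subseteq\hat\T$ for which $R=(T_v^R)_{v\in S}$ with $T_v^R:=\hat T_v\cap\T_R$ is a minimal chordal representation of $G[S]$, hence an element of $\R(S)$.

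I then propose as the extension the family $(T_v^*)_{v\in S\cup(U\cap Y_S)}$ on~$\hat\T$ defined by $T_v^*:=T_v^R$ for $v\in S$ and $T_w^*:=\hat T_w$ for $w\in U\cap Y_S$. The intersection conditions within $S$ hold because $R$ is a chordal representation of~$G[S]$; those within $U\cap Y_S$ hold because $(\hat T_v)_{v\in S\cup U}$ is a chordal representation of~$G[S\cup U]$; and for a mixed non-edge $vw$ with $v\in S$, $w\in U\cap Y_S$, one has $T_v^*\cap T_w^*\subseteq\hat T_v\cap\hat T_w=\varnothing$. The single nontrivial check is that for a mixed edge $vw$, the intersection $T_v^R\cap\hat T_w=\hat T_v\cap\hat T_w\cap\T_R$ is non-empty; this is the main obstacle, and the only place where the assumption $w\in Y_S$ enters.

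For this check, I pick $a,b\in N(w)\cap S$ with $ab\notin E(G)$, which exist because $w\in Y_S$. Since $\hat T_w$ is connected and meets both $\hat T_a$ and $\hat T_b$, it contains the unique bridge $P_{ab}$ between these subtrees in~$\hat\T$; and since $\T_R$ is a connected subtree of $\hat\T$ whose intersections with each of $\hat T_a$ and $\hat T_b$ are non-empty, $\T_R$ must also contain this bridge $P_{ab}$. In particular $\hat T_w\cap\T_R\supseteq P_{ab}\neq\varnothing$. Now if $\hat T_v\cap\hat T_w$ missed $\T_R$, any point $r\in\hat T_v\cap\hat T_w$ would lie in some branch $B$ of $\hat\T\setminus\T_R$ attached to $\T_R$ at a single point~$s_B$; connectivity of $\hat T_v$ (together with $T_v^R\subseteq\T_R\cap\hat T_v\neq\varnothing$) forces $s_B\in\hat T_v$, and connectivity of $\hat T_w$ (together with $P_{ab}\subseteq\T_R\cap\hat T_w\neq\varnothing$) forces $s_B\in\hat T_w$, giving $s_B\in\hat T_v\cap\hat T_w\cap\T_R$, a contradiction. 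With the extension verified, Lemma~\ref{lem:YS} yields the required $(x_v^R)_{v\in U\cap Y_S}$-pinned representation of $G[U\cap Y_S]$ on~$\T_R$.
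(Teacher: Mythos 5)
Your proof is correct and follows the same overall route as the paper: restrict the ambient representation $(\hat T_v)_{v\in S\cup U}$ of $G[S\cup U]$ to $S$, minimize it to obtain some $R\in\R(S)$ on a subtree $\T_R\subseteq\hat\T$, and then feed Lemma~\ref{lem:YS} a representation of $G[(U\cap Y_S)\cup S]$ that extends $R$. What you add, and what makes your write-up more careful than the paper's, is the verification that such an extending representation actually exists. The paper simply asserts that the ambient family $(T_u)_{u\in(U\cap Y_S)\cup S}$ ``extends $R$'', but this is not literally true: for $u\in S$ the ambient tree $T_u$ is in general a proper superset of $R$'s tree $T_u\cap\T_R$, whereas Lemma~\ref{lem:YS} (see the parenthetical ``so we the same trees associated to the elements of~$S$'' in its proof) requires equality on~$S$. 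You repair this by replacing the $S$-trees by $T_v^R$, correctly isolating the only nontrivial case---a mixed edge $vw$ with $v\in S$, $w\in U\cap Y_S$---and settling it with a bridge-plus-branch argument; that is exactly the step where the hypothesis $w\in Y_S$ is used, and your argument there is sound. A marginally shorter finish is available: your bridge argument already shows $\hat T_w\cap\T_R\neq\varnothing$ for every $w\in U\cap Y_S$, and one can then invoke Proposition~\ref{prop:nonempty tree intersection} to cut the whole ambient family down to~$\T_R$, landing directly on a representation that literally extends $R$ on $\T_R$; your branch argument essentially reproves the Helly step that proposition already encapsulates. Either way, your proof supplies a justification the paper leaves implicit.
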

  \begin{proof}
    Assume that~$G[S\cup U]$ has a chordal
    representation~$(T_u)_{u\in S\cup U}$ on a tree~$\T$.
    The subtree family~$(T_u)_{u\in S}$
    is in particular a chordal representation of~$G[S]$,
    so there is a subtree~$\T'\subseteq \T$ such
    that the family~$(T_u\cap\T')_{u\in S}$ of subtrees of~$\T'$ is a minimal
    representation of~$G[S]$.
    Up to applying an homeomorphism to~$\T$ and~$(T_u)_{u\in U}$, we may assume
   that~$R=(T_u\cap\T')_{u\in S}$ and~$\T'=\T_R$ for some~$R\in\R(S)$.
    In this case, the family~$(T_u)_{u\in (U\cap Y_S)\cup S}$ is a chordal representation
    of~$G[(U\cap Y_S)\cup S]$
    that extends~$R$ on the tree~$\T$ that extends~$\T_R$.
    It follows from the construction of~$(x_{v}^R)_{v\in Y_S}$ via Lemma~\ref{lem:YS}
    that~$G[U\cap Y_S]$ has a $(x_v^R)_{v\in U\cap Y_S}$-pinned representation
    on the tree~$\T_R$, as claimed.
  \end{proof}
  For~$S,U\subseteq V$ and a chordal representation~$R\in\R(S)$, define the event
  \[
    E_R(S,U): \text{$G[U\cap Y_S]$
      has a~$(x_u^R)_{u\in U\cap Y_S}$-pinned representation on~$\T_R$.}
  \]
  By Claim~\ref{claim:GU to xR}, $G[S\cup U]$ is chordal only if~$E_R(S,U)$ happens for some~$R\in\R(S)$.
  
  We aim to apply Lemma~\ref{lem:test pinned} to show that~$\Pr(E_R(S,U))$ is small when~$S$ and~$R\in\R(S)$ are fixed and~$U$ is a large enough random set.
  To have this lemma formally apply,
  we proceed to following padding construction.
  Let~$(y_v^R)_{v\in V}$ be a family of points of~$\G_R$  of size
  at most~$\binom{|S|}{2}$
  such that~$y_v^R=x_v^R$ for every~$v\in Y_S$ and that is arbitrarily defined
  on~$V\setminus Y_S$.
  Let~$H$ be the graph on~$V$ such that~$H[Y]=G[Y]$ and every vertex of~$X$
  is universal.

  Given~$S,U\subseteq V$ and~$R\in\R(S)$, define the event
  \[
    \text{$E_R'(S,U)$:~$G[U\cap Y]$ has
      a~$(y_v^R)_{v\in U\cap Y}$-pinned representation on~$\T_R$.}
  \]
  \begin{claim}\label{claim:y-pinned:U0}
    Fix~$S\subseteq V$ of size~$s$ and~$R\in\R(S)$.
    Let~$U_0$ be a random set chosen uniformly among subsets
    of~$V$ of size~$m_{\ref{lem:test pinned}}(\frac{\delta}{2},\frac{s^2}{2})$.
    The probability of~$E'_R(S,U_0)$ is at most~$\frac{1}{2}$.
  \end{claim}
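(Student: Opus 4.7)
The plan is to apply Lemma~\ref{lem:test pinned} to the padded graph~$H$ with parameters $\epsilon=\delta/2$, $k=s^2/2$, tree~$\T_R$, pin set~$\G_R\subseteq\T_R$, and pin values $(y_v^R)_{v\in V}$. First I would check that the hypotheses fit: by Proposition~\ref{prop:leaf}, every leaf of the minimal tree~$\T_R$ arises from some singleton tree~$T_v$ with $v\in S$, so~$\T_R$ has at most~$s\leq s^2/2$ leaves, while $|\G_R|\leq\binom{s}{2}\leq s^2/2$. The size of~$U_0$ is exactly $m_{\ref{lem:test pinned}}(\delta/2,s^2/2)$, matching the lemma's query complexity.

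Next I would verify that~$H$ is $(\delta/2)$-far from being chordal, so that the first alternative of Lemma~\ref{lem:test pinned} is ruled out. Indeed, if one could modify at most $\delta n^2/2$ pairs to make~$H$ chordal, then restricting those modifications to the pairs inside~$Y$ would still produce a graph whose restriction to~$Y$ is an induced subgraph of a chordal graph, hence chordal. Since $H[Y]=G[Y]$, this would yield a chordalization of~$G[Y]$ using at most $\delta n^2/2$ edge modifications, contradicting Claim~\ref{claim:GY delta far}. Applying Lemma~\ref{lem:test pinned} therefore gives that with probability at least~$1/2$, the induced subgraph~$H[U_0]$ admits no~$(y_v^R)_{v\in U_0}$-pinned chordal representation on~$\T_R$.

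It remains to show that the event~$E'_R(S,U_0)$ implies that~$H[U_0]$ \emph{does} admit such a representation; this will yield $\Pr(E'_R(S,U_0))\leq 1/2$. Suppose $(T_v)_{v\in U_0\cap Y}$ is a $(y_v^R)_{v\in U_0\cap Y}$-pinned chordal representation of $G[U_0\cap Y]=H[U_0\cap Y]$ on~$\T_R$. For each~$u\in U_0\cap X$, set $T_u=\T_R$. Since every vertex of~$X$ is universal in~$H$, these full subtrees produce the correct adjacencies with all other elements of~$U_0$, while the subgraph on~$U_0\cap Y$ is already correctly represented; and $y_u^R\in\G_R\subseteq\T_R=T_u$ for each~$u\in U_0\cap X$, so the resulting representation of~$H[U_0]$ is pinned.

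The one subtle point is the design of the padding~$H$: it must simultaneously be far from chordal, which follows from Claim~\ref{claim:GY delta far} because chordality is preserved under taking induced subgraphs, and also admit a canonical extension of any pinned chordal representation of~$G[U_0\cap Y]$ to all of~$U_0$, which is achieved precisely by making the vertices of~$X$ universal so that assigning them the full tree~$\T_R$ is consistent.
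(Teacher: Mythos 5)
Your proposal is correct and follows essentially the same route as the paper: apply Lemma~\ref{lem:test pinned} to the padded graph~$H$ with $k=s^2/2$, use Claim~\ref{claim:GY delta far} to show $H$ is $(\delta/2)$-far from chordal (since any chordalization of~$H$ restricts to one of~$H[Y]=G[Y]$), bound the leaves of~$\T_R$ via Proposition~\ref{prop:leaf} and~$|\G_R|$ via Lemma~\ref{lem:YS}, and then observe that a pinned representation of~$G[U_0\cap Y]$ extends to one of~$H[U_0]$ by assigning $T_v=\T_R$ to each universal $v\in X\cap U_0$. The only difference is presentational: you spell out the "restrict to~$Y$" argument in more detail, which the paper states in one line.
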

  \begin{proof}
    Observe first that~$H$ is $\frac{\delta}{2}$-far from being a chordal graph
    because by Claim~\ref{claim:GY delta far}
    one has to add/delete at least~$\frac{\delta}{2}n$
    edges of~$H[Y]=G[Y]$ to make it chordal.
    Moreover, $|\G_R|\leq\binom{s}{2}$ and by Proposition~\ref{prop:leaf} the tree~$\T_R$ has at most~$s$ leaves.
    By Lemma~\ref{lem:test pinned} applied with~$k=s^2/2$, this implies
    that with probability at least~$\frac{1}{2}$,
    the graph~$H[U_0]$ has no
    $(y_v^R)_{v\in U_0}$-pinned representation on~$\T_R$.
    
    Now observe that a $(y_v^R)_{v\in U_0\cap Y}$-pinned
    representation~$(T_v)_{v\in U_0\cap Y}$ of
    the graph~$G[U_0\cap Y]=H[U_0\cap Y]$ on~$\T_R$
    can be extended to a $(y_v^R)_{v\in U_0}$-pinned representation
    of~$H[U_0]$ on~$\T_R$ by defining~$T_v:=\T_R$ for every~$v\in X\cap U_0$.
    
    The claim follows directly from these two observations.
  \end{proof}
  Let us amplify Claim~\ref{claim:y-pinned:U0}.
  \begin{claim}\label{claim:prob ER'}
    Fix~$S\subseteq V$ of size~$s$ and~$R\in\R(S)$.  
    Let~$U$ be a set chosen uniformly at random among subsets of~$V$ of
    size~$t=(2+\log_2(m_{\ref{lem:chordal repr}}(s)))\cdot m_{\ref{lem:test pinned}}(\frac{\delta}{2},\frac{s^2}{2})$,
    then
    \[
      \Pr(E_R'(S,U))\leq \frac{1}{4m_{\ref{lem:chordal repr}}(s)}.
    \]
  \end{claim}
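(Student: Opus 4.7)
The plan is to amplify the $1/2$ bound from Claim~\ref{claim:y-pinned:U0} into the desired $1/(4 m_{\ref{lem:chordal repr}}(s))$ bound by running $r := 2 + \log_2 m_{\ref{lem:chordal repr}}(s)$ disjoint trials of the size-$m_0$ test, where $m_0 := m_{\ref{lem:test pinned}}(\delta/2, s^2/2)$, for a total of $t = r \cdot m_0$ samples.

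The first observation is that the event $E'_R(S, \cdot)$ is \emph{monotone decreasing} in its set argument: given a $(y_v^R)_{v \in U \cap Y}$-pinned representation $(T_v)_{v \in U \cap Y}$ of $G[U \cap Y]$ on $\T_R$, its restriction $(T_v)_{v \in U' \cap Y}$ to any $U' \subseteq U$ still satisfies all the pinning and intersection constraints, hence is a $(y_v^R)_{v \in U' \cap Y}$-pinned representation of $G[U' \cap Y]$ on $\T_R$. Thus $E'_R(S, U) \Rightarrow E'_R(S, U')$ whenever $U' \subseteq U$.

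Next I would realize the uniform random size-$t$ subset $U$ as the first $t$ elements of a uniformly random permutation of $V$, split into $r$ consecutive blocks $U_1, \dots, U_r$ of size $m_0$. Each block $U_i$ is marginally a uniform random size-$m_0$ subset of $V$. By monotonicity,
\[
\Pr(E'_R(S, U)) \;\leq\; \Pr\!\left(\bigcap_{i=1}^r E'_R(S, U_i)\right) \;=\; \prod_{i=1}^r \Pr\!\left(E'_R(S, U_i) \,\middle|\, U_1, \dots, U_{i-1}\right).
\]
The aim is then to bound each conditional factor by $1/2$ by re-running the argument of Claim~\ref{claim:y-pinned:U0} inside the reduced universe $V \setminus (U_1 \cup \dots \cup U_{i-1})$, which yields the product bound $(1/2)^r = 1/(4 m_{\ref{lem:chordal repr}}(s))$.

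The main obstacle is justifying the conditional bound rigorously: the proof of Claim~\ref{claim:y-pinned:U0} ultimately invokes Lemma~\ref{lem:test pinned} on the graph $H$ using the fact that $H$ is $(\delta/2)$-far from chordal, and one must verify that this farness survives restriction to the reduced universe. Since removing $(i-1) m_0 \leq t$ vertices destroys at most $tn$ edges of $H$, as long as $t = o(\delta n)$ (which is the case in the regime used in the main theorem) the restricted graph $H[V \setminus (U_1 \cup \dots \cup U_{i-1})]$ remains sufficiently far from chordal for Claim~\ref{claim:y-pinned:U0} to apply with the same sample size $m_0$. An alternative that sidesteps this subtlety is to replace the partition argument by drawing $r$ truly independent size-$m_0$ subsets: the product bound is then immediate from Claim~\ref{claim:y-pinned:U0} and independence, and monotonicity transfers the estimate to the uniform size-$t$ distribution via a standard coupling.
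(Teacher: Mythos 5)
Your proposal matches the paper's proof in essence: the paper simply declares that ``$U$ contains $\ell$ independent subsets $U_1,\dots,U_\ell$'' of size $m_{\ref{lem:test pinned}}(\delta/2,s^2/2)$, uses Claim~\ref{claim:y-pinned:U0} to bound each $\Pr(E_R'(S,U_i))$ by $1/2$, and multiplies. The monotonicity of $E_R'(S,\cdot)$ (restricting a pinned representation to a smaller vertex set still gives a pinned representation) is exactly the hidden ingredient that lets one pass from $\Pr(E_R'(S,U))$ to $\Pr(\bigcap_i E_R'(S,U_i))$, and you correctly identified it; the paper uses it implicitly. Your ``alternative'' route---draw the $U_i$ genuinely independently and then couple a uniform size-$t$ set so as to contain their union---is the clean way to reconcile the phrase ``$U$ contains independent subsets'' with $U$ being a uniform size-$t$ set, and is indeed preferable to the conditional-block version (whose farness-in-reduced-universe issue you rightly flag as an unnecessary complication). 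So your proof is correct and is the more carefully argued form of what the paper does.
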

  \begin{proof}
    We consider that~$U$ contains $\ell:=2+\log_2(m_{\ref{lem:chordal repr}}(s))$
    independent subsets $U_1,\dots,U_{\ell}$
    of size~$m_{\ref{lem:test pinned}}(\frac{\delta}{2},\frac{s^2}{2})$.
    The events~$(E_R'(S,U_i))_{i=1}^\ell$ are therefore mutually independent.
    As a consequence of Claim~\ref{claim:y-pinned:U0},
    the probability of~$E_R'(S,U_i)$ is at most~$1/2$ for every~$i\in[\ell]$.
    It then follows that
    \[
      \Pr(E_R'(S,U))\leq\Pr(\bigcap_{i=1}^{\ell}E_R'(S,U_i))
      =\prod_{i=1}^{\ell}\Pr(E_R'(S,U_i))\leq 2^{-\ell}=
      \frac{1}{4m_{\ref{lem:chordal repr}}(s)}.
    \]
  \end{proof}
  The events~$E_R(S,U)$ and~$E_R'(S,U)$ are related by the following property.
  \begin{claim}\label{claim:ER and ER'}
    For every $S,U\subseteq V$ and~$R\in\R(S)$,
    the event~$E_R(S,U)$ implies~$E_{R}'(S,U)$ whenever~$U\cap Y\subset U\cap Y_S$.
  \end{claim}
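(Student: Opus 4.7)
The plan is to observe that $E_R(S,U) \Rightarrow E_R'(S,U)$ is essentially a restriction argument: a pinned representation on a larger vertex set, when restricted to a smaller vertex set, remains a pinned representation, provided the pin points agree on the intersection. The hypothesis $U \cap Y \subseteq U \cap Y_S$ ensures the smaller vertex set sits inside the larger one, and the construction of $(y_v^R)_{v \in V}$ by padding $(x_v^R)_{v \in Y_S}$ ensures the pins coincide on $Y_S$.

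First, I would assume $E_R(S,U)$ holds and unpack the definition: there exists a family $(T_v)_{v \in U \cap Y_S}$ of subtrees of $\T_R$ that is a chordal representation of $G[U \cap Y_S]$ and satisfies $x_v^R \in T_v$ for every $v \in U \cap Y_S$. Next, using the hypothesis $U \cap Y \subseteq U \cap Y_S$, I would restrict this family to the index set $U \cap Y$. Since a chordal representation restricted to a subset of its vertices is still a chordal representation of the corresponding induced subgraph (two subtrees intersect in $\T_R$ iff the vertices were adjacent in $G$), the family $(T_v)_{v \in U \cap Y}$ is a chordal representation of $G[U \cap Y]$ on $\T_R$.

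It then remains to check that this restricted representation is pinned by $(y_v^R)_{v \in U \cap Y}$. For any $v \in U \cap Y$, the hypothesis gives $v \in U \cap Y_S$, so in particular $v \in Y_S$, and by construction of $(y_v^R)_{v \in V}$ we have $y_v^R = x_v^R$. Combined with $x_v^R \in T_v$ from the original representation, this yields $y_v^R \in T_v$, so the representation is $(y_v^R)_{v \in U \cap Y}$-pinned and $E_R'(S,U)$ holds.

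There is no real obstacle here: the entire argument is a one-line restriction, and the only substantive content is the matching of the pins on $Y_S$, which is built into the definition of the padded family $(y_v^R)_{v \in V}$. The role of this claim is to be the bridge that lets us replace the intractable event $E_R(S,U)$ (which involves the graph-dependent set $Y_S$) by the event $E_R'(S,U)$ (which involves only the fixed set $Y$), so that Claim~\ref{claim:prob ER'} can be applied uniformly in a subsequent union bound over $\R(S)$.
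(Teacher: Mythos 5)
Your proof is correct and follows the paper's argument exactly: restrict the pinned representation on $U\cap Y_S$ to the smaller index set $U\cap Y$, note that this is still a chordal representation of the induced subgraph, and observe that $y_v^R=x_v^R$ for $v\in Y_S$ by construction of the padded family, so the pins carry over.
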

  \begin{proof}
    Indeed, assume that~$U\cap Y\subset U\cap Y_S$ and that~$E_R(S,U)$ holds, that is
    $G[U\cap Y_S]$ has a~$(x_u^R)_{u\in U\cap Y_S}$-pinned
    representation~$(T_u)_{u\in U\cap Y_S}$ on~$\T_R$.
    The restriction~$(T_u)_{u\in U\cap Y}$ of this representation to~$G[U\cap Y]$
    is a~$(x_v^R)_{v\in U\cap Y}$-pinned representation of~$G[U\cap Y]$ on the same tree~$\T_R$.
    Since~$x_v^R=y_v^R$ for every~$v\in U\cap Y \subseteq Y_S$, the same family~$(T_u)_{u\in U\cap Y}$ is also a~$(y_v^R)_{v\in U\cap Y}$-pinned representation of~$G[U\cap Y]$ on~$\T_R$, so~$E_R'(S,U)$ holds.
  \end{proof}
  Let~$E_Y(S,U)$ be the event that $U\cap Y\subset U\cap Y_S$.
  It follows from Claim~\ref{claim:ER and ER'} that
  for every~$S,U\subseteq V$ and~$R\in\R(S)$,
  \begin{equation}\label{eq:ER and ER'}
    \bigcup_{R\in\R(S)}E_R(S,U) \subseteq
    \left(\bigcup_{R\in\R(S)}E_R'(S,U)\right)\cap E_Y(S,U).
  \end{equation}
  Let us bound from below the probability of~$E_Y(S,U)$.
  \begin{claim}\label{claim:prob EY}
    Fix~$U\subseteq V$ and let~$S$ be a set chosen uniformly at random among the subsets of~$V$ of size~$s\leq\frac{8}{\delta}\ln|U|$.
    Then
    $Pr(E_Y(S,U)) \geq 1 - \frac{1}{4}$.
  \end{claim}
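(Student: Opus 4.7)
The plan is to reduce $\Pr[\neg E_Y(S,U)]$ to a per-vertex bound via a union bound over $U \cap Y$. The event $E_Y(S,U)$ fails precisely when some $u \in U \cap Y$ has $N(u) \cap S$ a clique of $G$. Since $|U \cap Y| \leq |U|$, it suffices to prove that for each fixed $u \in Y$,
\[
  \Pr\bigl[N(u) \cap S \text{ is a clique of } G\bigr] \leq \frac{1}{4|U|},
\]
after which the union bound delivers $\Pr[\neg E_Y(S,U)] \leq 1/4$.

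Fix $u \in Y$. By the definition of $Y$ we have $p_G(u) \geq \frac{\delta}{2} n^2$, so $N(u)$ contains at least $\frac{\delta}{2} n^2$ non-edges of $G$, which I will call the \emph{bad pairs} for $u$. The event ``$N(u) \cap S$ is a clique'' is exactly the event that $S$ contains no bad pair. I would realize $S$ as the first $s$ elements of a uniform random permutation of $V$ and partition it into the $\lfloor s/2 \rfloor$ disjoint pairs $\{s_1,s_2\},\{s_3,s_4\},\dots$. Conditioning on the first $j$ of these pairs, at most $2j \leq s$ vertices have been revealed, and since every vertex of $V$ lies in at most $|N(u)|-1 \leq n$ bad pairs, at least $\frac{\delta}{2}n^2 - sn \geq \frac{\delta}{4}n^2$ bad pairs survive entirely inside the residual pool of size $\geq n-s$. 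As the next unordered pair $\{s_{2j+1},s_{2j+2}\}$ is uniform among the $2$-subsets of that pool, its conditional probability of being a bad pair is at least $\delta/2$ (under the mild condition $s\leq \delta n/4$, which is automatic in the regime of the main proof). The tower property then yields
\[
  \Pr\bigl[N(u) \cap S \text{ is a clique of } G\bigr] \leq (1-\delta/2)^{\lfloor s/2 \rfloor} \leq e^{-\delta s/4}.
\]
Substituting $s = \frac{8}{\delta} \ln |U|$ gives $|U|^{-2}$, which is at most $(4|U|)^{-1}$ for $|U|\geq 4$, completing the per-vertex bound.

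The main obstacle is handling the conditioning in this sequential pair-exposure argument, since $S$ is sampled without replacement and successive pairs are negatively correlated. The key combinatorial observation that makes the argument go through is that each already-exposed vertex can destroy at most $|N(u)|-1 \leq n$ bad pairs, so the fraction of bad pairs among $2$-subsets of the residual pool remains $\Omega(\delta)$ throughout the process; this is precisely what makes the mild hypothesis $s \leq \delta n/4$ sufficient and allows the union bound over $U \cap Y$ to close out at $1/4$.
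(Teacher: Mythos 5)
Your proof is correct and takes essentially the same route as the paper's: a union bound over $u \in U \cap Y$, and for each such $u$ an exponential bound obtained by splitting $S$ into disjoint pairs and exploiting the $\geq \frac{\delta}{2}n^2$ non-edges in $N(u)$. The one place you are more careful than the paper is the treatment of without-replacement correlations: the paper raises the single-pair failure probability to the power $\frac{s-2}{2}$ as though the disjoint pairs were independent, whereas your sequential-exposure argument, which bounds each conditional success probability below by $\delta/2$ (using the harmless side condition $s\leq\delta n/4$), makes that step rigorous.
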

  \begin{proof}
    Note that a vertex~$u\in Y\cap U$ belongs to~$Y_S$ if and only if
    there is a~$P_3$ in~$G[S]$ on vertices~$\{s_1,u,s_2\}$
    whose middle vertex is~$u$ and such that~$s_1$ and~$s_2$ are in~$S$.
    By definition of~$Y$, it holds that~$p_G(u)\geq\frac{\delta}{2}n^2$ in the graph~$G$.
    The random set~$S\setminus\{u\}$
    contains at least~$\frac{s-2}{2}$ independent pairs of vertices,
    i.e. the probability that~$u\notin Y_S$, i.e. that none of these pairs forms a~$P_3$ with middle vertex~$u$, is at most
    \[
      \left(1-\frac{\delta}{2}\cdot n^2/\binom{n}{2}\right)^{\frac{s-2}{2}}
      \leq (1-\delta)^{\frac{s}{4}}
      \leq e^{-\frac{\delta}{4}s}\leq |U|^{-2}< \frac{1}{4|U|}.
    \]
    By the union bound, it follows that
    \[
      \Pr(E_Y(S,U)) \geq 1 - |U|\cdot\frac{1}{4|U|} = \frac{3}{4}.
    \]
  \end{proof}
  We are now ready to prove the theorem.
  Recall that~$r=\epsilon^{-100}$.
  Set $s=\frac{8}{\delta}\ln r$ as suggested by Claim~\ref{claim:prob EY}
  and $t=(2+\log_2(m_{\ref{lem:chordal repr}}(s)))\cdot m_{\ref{lem:test pinned}}(\frac{\delta}{2},\frac{s^2}{2})$ as in Claim~\ref{claim:prob ER'}.

  We claim that
  \begin{equation}\label{eq:s, t and r}
    s+t\leq r
  \end{equation}
  when~$\epsilon$ is small enough.
  Before proving~\eqref{eq:s, t and r}, let us see why it implies the theorem.
  In this case, $W$ contains two sets~$S$ and~$U$ chosen independently and uniformly at random among subsets of respective size~$s$ and~$t$.

  Claim~\ref{claim:prob EY} applies to~$S$ and~$U$ because~\eqref{eq:s, t and r}
  implies that~$t\leq r$, and further that~$s\geq\frac{8}{\delta}\ln t$.
  Consequently, it holds that~$\Pr(E_Y(S,U))\leq\frac{3}{4}$.
  By Claim~\ref{claim:prob ER'} and the union bound,
  the probability that~$E_R'(S,U)$ holds for at least one~$R\in\R(S)$ is at most $|\R(S)|\cdot\frac{1}{4m_{\ref{lem:chordal repr}}(s)}\leq\frac{1}{4}$.
  Using~\eqref{eq:ER and ER'} and Claim~\ref{claim:GU to xR}, the probability that~$G[S\cup U]$ is a chordal graph is at most
  \[
    \Pr(\exists R\in\R(S)\text{ s.t. }E'(S,U))
    + (1 - \Pr(E_Y(U,S))) \leq \frac{1}{4}+\frac{1}{4}=\frac{1}{2}.
  \]
  To conclude the proof of Theorem~\ref{thm:main},
  it remains to show~\eqref{eq:s, t and r}.
  We have
  \begin{align*}
    t+s
    &= (2+\log_2(m_{\ref{lem:chordal repr}}(s)))\cdot
      m_{\ref{lem:test pinned}}\left(\frac{\delta}{2},\frac{s^2}{2}\right) + s\\
    &\leq (2+2s^2\ln(3s))\cdot 2^{40}\cdot\left(\frac{\delta}{2}\right)^{-4}
      \cdot\left(\frac{s^2}{2}\right)^6\ln^{4}\left(\frac{s^2}{2}\right) + s\\
    &\leq O\left(s^{14}\ln^{5}s\cdot\delta^{-4}\right)\\
    &\leq O\left(\delta^{-18}\ln^{14}(1/\epsilon)\cdot\ln^5(\ln(\epsilon)/\delta)\right)\\
    &\leq O\left(\epsilon^{-36}\ln^{19}(1/\epsilon)\right)
  \end{align*}
  which is indeed smaller than~$\epsilon^{-37}$ when~$\epsilon$ is small enough.

\qed

\bibliographystyle{plain}
\bibliography{bibli}
\appendix

\section{Proof of Theorem~\ref{thm:coloring}}
\label{ap:proof:thm:coloring}

In this section we prove Theorem~\ref{thm:coloring}.
This proof is an adaptation
of the proof in~\cite[Section~4]{AlonK02}.
We use the same notations to make the similarity apparent.
\begin{proof}
  \resetClaimCounter
  Set~$p=\max_{u\in V}|L_u|$.
  Assume that every coloring of~$V$ has at least~$\epsilon n^2$ conflicting pairs.
  The goal of the proof is to show that if~$X$ is a random subset of~$V$
  of size~$s=36k\ln p/\epsilon^2$,
  then $G[X]$ has no proper coloring with probability
  at least~$\frac{1}{2}$.
  For the sake of the analysis,
  we consider that~$X$ is generated
  in~$s$ rounds, each time choosing a new vertex~$x_j$ in~$V$,
  uniformly at random.

  Note that we may assume that~$\epsilon<\frac{1}{2}$
  and~$p\geq2$.
  Indeed, if~$\epsilon\geq\frac{1}{2}$ then any coloring of~$V$
  has at most~$\binom{n}{2}<\epsilon n^2$ conflicting pairs.
  If~$p=1$, then~$V$ has a unique coloring~$\phi$,
  and it suffices to show that~$X$ contains one of
  the (at least)~$\epsilon n^2$ conflicting
  pairs of~$\phi$ with probability~$\frac{1}{2}$.
  It suffices for this that~$|X|\geq\epsilon^{-1}$.
  Indeed, in this case~$X$ contains at least~$\frac{1}{2\epsilon}$
  independent pairs of vertices, so the probability that none of them is conflicting
  is at most~$(1-2\epsilon)^{1/(2\epsilon)}\leq e^{-1}<\frac{1}{2}$.

  Given a proper coloring~$\phi:S\to 2^{[k]}$
  of a set~$S\subseteq V$,
  we analyze how~$\phi$ can be extended to a larger subset of~$V$.
  For a vertex~$v\in V$,
  let $L_\phi(v) \subseteq L_v$ be the list of colors~$c$ such that extending~$\phi$
  to~$S\cup\{v\}$ by~$\phi(v)=c$
  is a proper coloring of~$S\cup\{v\}$.
  More precisely, this set is defined by
  \[
    L_\phi(v) =
    \sst{c \in L_v}{\forall u\in S,~\text{$c$ and $\phi(u)$ are not conflicting}}.
  \]
  Now define the sets
  \[
    m_\phi(v)=\bigcup_{u\in S}m_{uv}(\phi(u))  \quad\text{and}\quad
    M_\phi(v)=\bigcap_{u\in S}M_{uv}(\phi(u)).
  \]
  Note that it follows from the definitions and~\eqref{eq:valid pair}
  that every color~$c\in L_\phi(v)$
  satisfies~$m_\phi(v)\subseteq c \subseteq M_\phi(v)$.
  We will therefore use the value
  \[
    E_\phi:=\sum_{v\in V}\left|M_\phi(v)| - |m_\phi(v)\right|
  \]
  as a measure of the freedom we have
  when trying to extend the partial coloring~$\phi$ to~$V$.
  
  A vertex~$v$ with $L_\phi(v)=\varnothing$ is called \emph{colorless}.
  If~$v$ is a colorless vertex, then we know that~$\phi$
  does not extend properly to~$S\cup\{v\}$.
  Let~$U_\phi$ be the set of colorless vertices.

  Given~$u\in V$ and a color~$c\in L_u$, define
  \[
    \delta^c_\phi(u) = \sum_{v \in V}
    |M_\phi(v)\setminus M_{uv}(c)| + |m_{uv}(c)\setminus m_\phi(v)|.
  \]
  This value is defined so that if~$\phi$ is extended to a coloring~$\phi'$
  on~$S\cup\{u\}$ by~$\phi'(u)=c$,
  then $E_{\phi'}=E_\phi - \delta^c_\phi(u)$.
  
  A greedy coloring of the whole vertex set~$V$
  extending~$\phi$ consists in assigning
  to a vertex~$v\in V\setminus (S\cup U_\phi)$ the color~$c=\alpha_\phi(v)$
  that minimizes the number~$\delta^c_\phi(v)$
  among the colors of~$L_\phi(v)$.
  Let~$\delta_\phi(v):=\min_{c\in L_\phi(v)}\delta^c_\phi(v)$ be this minimum.
  If~$u$ is colorless, then choose $\alpha_\phi(u)$ arbitrarily in~$L_u$.
  \begin{claim}\label{claim:greedy coloring}
    The number
    \[
      \sum_{u\in V\setminus(U_\phi\cup S)}\delta_\phi(u) + n|U_\phi|
    \]
    is an upper bound to the number of conflicting pairs of
    the coloring~$\alpha_\phi$ on~$V$.
  \end{claim}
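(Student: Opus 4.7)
The plan is to partition the conflicting pairs of $\alpha_\phi$ by where their endpoints sit and bound each part separately.

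First, pairs with at least one endpoint in $U_\phi$ are absorbed by the $n|U_\phi|$ summand, since there are at most $|U_\phi|(n-1)\le n|U_\phi|$ of them. For the remaining conflicting pairs both endpoints lie in $V\setminus U_\phi$, and I would argue none of these has an endpoint in $S$: two endpoints in $S$ would contradict that $\phi$ is a proper coloring of $S$, while if $v\in V\setminus(S\cup U_\phi)$ and $u\in S$, the definition of $L_\phi(v)$ together with $\alpha_\phi(v)\in L_\phi(v)$ forbids a conflict with $\phi(u)=\alpha_\phi(u)$.

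It remains to bound the conflicting pairs $\{u,v\}\subseteq V\setminus(S\cup U_\phi)$. For each such $u$, writing $c=\alpha_\phi(u)$ and letting $F_u$ be the set of $v\in V\setminus(S\cup U_\phi)\setminus\{u\}$ for which~\eqref{eq:valid pair} fails in the direction $(u,v)$, the main step I would prove is
\[
|F_u|\ \le\ \delta^{c}_\phi(u)\ =\ \delta_\phi(u).
\]
The pointwise observation is set-theoretic: when~\eqref{eq:valid pair} fails for $(u,v)$, either some $x\in m_{uv}(c)$ is not in $\alpha_\phi(v)$, or some $x\in\alpha_\phi(v)$ is not in $M_{uv}(c)$. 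Since $\alpha_\phi(v)\in L_\phi(v)$ forces $m_\phi(v)\subseteq\alpha_\phi(v)\subseteq M_\phi(v)$, in the first case $x\in m_{uv}(c)\setminus m_\phi(v)$ and in the second $x\in M_\phi(v)\setminus M_{uv}(c)$, so each $v\in F_u$ contributes at least one to the corresponding summand of $\delta^{c}_\phi(u)$. Finally, the equality $\delta^{c}_\phi(u)=\delta_\phi(u)$ uses that $c$ was chosen to minimize $\delta^{\cdot}_\phi(u)$ over $L_\phi(u)$.

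Summing $|F_u|$ over $u\in V\setminus(S\cup U_\phi)$ and using that each unordered conflicting pair $\{u,v\}$ has~\eqref{eq:valid pair} failing in at least one of the two orderings $(u,v)$ or $(v,u)$, the number of remaining conflicting pairs is at most $\sum_{u\in V\setminus(S\cup U_\phi)}\delta_\phi(u)$. Adding the $n|U_\phi|$ bound from pairs involving colorless vertices yields the claim. I expect the only delicate point is the ordered-versus-unordered bookkeeping in this last step, but it is harmless: each unordered conflicting pair may simply be attributed to any single $u$ for which its $(u,v)$-direction fails, so no double-counting loss is incurred.
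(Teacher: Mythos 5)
Your proposal is correct and matches the paper's proof in its essentials: the same three-way decomposition (pairs touching $U_\phi$, pairs touching $S$, pairs inside $V\setminus(S\cup U_\phi)$) and the same pointwise argument that a violated $(u,v)$-constraint, combined with $m_\phi(v)\subseteq\alpha_\phi(v)\subseteq M_\phi(v)$, forces a nonzero contribution to $\delta^{\alpha_\phi(u)}_\phi(u)=\delta_\phi(u)$. You simply make the element-wise witness $x$ and the ordered-to-unordered bookkeeping explicit where the paper leaves them implicit.
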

  \begin{proof}
    The number of conflicting edges of~$\alpha_\phi$
    containing a vertex of~$U_\phi$ is at most~$n|U_\phi|$.
    Moreover, it follows from the definition
    of~$L_\phi$ and~$\alpha_\phi$
    that there is no conflict between~$S$ and~$V\setminus U_\phi$.
    
    Now,
    assume that a pair~$uv$ with $u,v\in V\setminus(U_\phi\cup S)$ is conflicting
    because the constraint
    $m_{uv}(\alpha_\phi(u))\subseteq\alpha_\phi(v)\subseteq M_{uv}(\alpha_\phi(u))$
    does not hold, i.e.
    \begin{equation}\label{eq:conflict uv}
      m_{uv}(\alpha_\phi(u))\nsubseteq\alpha_\phi(v)
      \text{\quad or \quad}
      \alpha_\phi(v)\nsubseteq M_{uv}(\alpha_\phi(u)).
    \end{equation}
    Recall that by definition of~$L_\phi(v)$, it holds that
    \begin{equation}\label{eq:alpha in L}
      m_\phi(v)\subseteq\alpha_\phi(v)\subseteq M_\phi(v).
    \end{equation}
    It follows from~\eqref{eq:conflict uv} and~\eqref{eq:alpha in L}
    that either $m_{uv}(\alpha_\phi(u))\nsubseteq m_\phi(v)$
    or $M_\phi(v)\nsubseteq M_{uv}(\alpha_\phi(u))$.
    In both cases,
    \[
      |M_\phi(v)\setminus M_{uv}(\alpha_\phi(u))| +
      |m_{uv}(\alpha_\phi(u))\setminus m_\phi(v)| \geq 1,
    \]
    so~$v$ contributes for at least one
    in the sum~$\delta_\phi^{\alpha_\phi(u)}(u)=\delta_\phi(u)$.
    It follows that the number of conflicting edges of~$\alpha_\phi$
    inside~$V\setminus(U_\phi \cup S)$
    is at most $\sum_{v\in V\setminus(U_\phi\cup S)}\delta_\phi(u)$.
  \end{proof}
  Let~$W_\phi$ be the set of vertices~$v\in V\setminus(U_\phi\cup S)$
  satisfying~$\delta_\phi(v)\geq\epsilon n/2$.
  A vertex in~$W_\phi$ is called \emph{restricting}.
  We deduce the following property.
  \begin{claim}\label{claim:U+W}
    For every proper partial coloring~$\phi$,
    \[
      |W_\phi|+|U_\phi|\geq \frac{\epsilon}{2}n.
    \]
  \end{claim}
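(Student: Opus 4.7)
The plan is to prove the claim by contradiction: assuming $|W_\phi|+|U_\phi|<\epsilon n/2$, I would exhibit a coloring of~$V$ with strictly fewer than $\epsilon n^2$ conflicting pairs, contradicting the hypothesis that every coloring of~$V$ has at least $\epsilon n^2$ conflicting pairs.

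First I would construct such a coloring~$\alpha$ by a small modification of $\alpha_\phi$: set $\alpha$ to agree with~$\phi$ on~$S$ and with $\alpha_\phi$ on the non-restricting non-colorless vertices $V\setminus(U_\phi\cup W_\phi\cup S)$; for every $v\in W_\phi$ pick $\alpha(v)$ arbitrarily in the non-empty set $L_\phi(v)$; and for every $v\in U_\phi$ pick $\alpha(v)$ arbitrarily in $L_v$. Since $\alpha(v)\in L_\phi(v)$ for every $v\notin U_\phi$, no conflict arises between~$S$ and $V\setminus(U_\phi\cup S)$.

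Next I would bound the conflicts of~$\alpha$ by splitting them according to whether some endpoint lies in $U_\phi\cup W_\phi$. Conflicts meeting $U_\phi\cup W_\phi$ are at most $n(|U_\phi|+|W_\phi|)<\epsilon n^2/2$, from the trivial bound that each vertex is in at most $n-1$ conflicting pairs. Conflicts entirely within $V\setminus(U_\phi\cup W_\phi\cup S)$ are controlled exactly as in the proof of Claim~\ref{claim:greedy coloring}: each such conflicting pair $\{u,v\}$ forces the $v$-term of $\delta_\phi^{\alpha_\phi(u)}(u)=\delta_\phi(u)$ to be at least one, so their total is bounded by $\sum_u\delta_\phi(u)$ taken over non-restricting non-colorless~$u$, which is strictly less than $n\cdot\epsilon n/2=\epsilon n^2/2$ by definition of $W_\phi$. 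Adding the two bounds yields strictly fewer than $\epsilon n^2$ conflicts, giving the contradiction.

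The main obstacle, and the reason for replacing $\alpha_\phi$ by $\alpha$ on $W_\phi$, is that a direct application of Claim~\ref{claim:greedy coloring} leaves the term $\sum_{u\in W_\phi}\delta_\phi(u)$, which could be as large as $\Theta(kn\cdot|W_\phi|)$ and would only yield the weaker bound $|W_\phi|+|U_\phi|\geq\Omega(\epsilon n/k)$. Using the modified coloring~$\alpha$ eliminates the dependence on~$k$ entirely, since the trivial ``$n-1$ conflicts per vertex'' bound is already enough to handle the restricting vertices.
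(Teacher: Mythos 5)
Your proof is correct and takes essentially the same route as the paper: bound the number of conflicting pairs of the greedy extension, split vertices into colourless, restricting and the rest, and solve for $|U_\phi|+|W_\phi|$. The point you flag is real: applying Claim~\ref{claim:greedy coloring} verbatim leaves the term $\sum_{u\in W_\phi}\delta_\phi(u)$, which is only bounded by $O(kn\,|W_\phi|)$ a priori and would not give the stated inequality. The paper's chain of inequalities tacitly replaces this term by $n|W_\phi|$; this is legitimate because the proof of Claim~\ref{claim:greedy coloring} actually charges each conflicting pair $\{u,v\}$ inside $V\setminus(U_\phi\cup S)$ to a distinct $v$-term of $\delta_\phi(u)$, so the number of pairs charged to any single $u$ is at most $\min(\delta_\phi(u),n)$ --- but this refinement is not spelled out in the statement of Claim~\ref{claim:greedy coloring}. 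Your re-derivation, which uses the trivial per-vertex bound of $n$ for vertices of $W_\phi\cup U_\phi$ and the $\delta_\phi$-bound only for the remaining ones, makes exactly this step explicit. One small remark: modifying $\alpha_\phi$ to $\alpha$ on $W_\phi$ is unnecessary, since $\alpha_\phi(v)\in L_\phi(v)$ already holds for all $v\in W_\phi$; the substance of your fix lies in the counting, not in the choice of coloring.
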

  \begin{proof}
    Recall that every coloring of~$V$ is assumed to have at least~$\epsilon n^2$
    conflicting edges.
    Applying this hypothesis on~$\alpha_\phi$,
    it follows from Claim~\ref{claim:greedy coloring} that
    \begin{align*}
      \epsilon n^2
      & \leq \sum_{u\in V\setminus(U\cup S)}\delta_\phi(v) + n|U_\phi|\\
      & \leq \frac{\epsilon}{2}n^2 + n(|W_\phi|+|U_\phi|).
    \end{align*}
    It further holds that~$|W_\phi|+|U_\phi|\geq \frac{\epsilon}{2}n$.
  \end{proof}  
  We follow the argument of~\cite{AlonK02}.
  In the process of choosing~$r_1,\dots,r_s$,
  we construct an auxiliary tree~$T$ whose nodes
  have at most~$p$ children.
  Each node of~$T$ can be unlabeled or labeled
  with either a vertex of~$G$ or a special symbol~$\#$
  called the \emph{terminal symbol}.
  During all the process we maintain the following property:
  every leaf of~$\T$ is either unlabeled or labeled by the symbol~$\#$,
  and every inner node is labeled by a vertex of~$G$.
  A leaf labeled with~$\#$ is called a \emph{terminal node}
  and remains a terminal node during all the process while
  unlabeled leaves can be later labeled.
  Every edge of~$T$ is labeled by a subset of~$[k]$
  (i.e. a color), such that the label of
  an edge from a node labeled by~$v$ to one of its children
  is an element of~$L_v$.

  For a node~$t$ of~$T$, we define the set~$S_t\subseteq V$
  as the set of the labels of the nodes along the path from the root to~$t$.
  The labels of the edges on this path define a coloring of~$S_t$
  in the natural way:
  $\phi_t(v)$ is the label of the edge following the node labeled by~$v\in S_t$
  in the path to~$t$.
  Now, the coloring~$\phi_t$ defines a set~$U_{\phi_t}$ of colorless vertices and
  a set~$W_{\phi_t}$ of restricting vertices.
  
  Let us describe the construction of~$T$.
  The construction starts with a single node that is unlabeled.
  Round~$j$ is \emph{successful} for the unlabeled node~$t$
  if the sampled vertex~$r_j$ belongs to~$U_{\phi_t}\cup W_{\phi_t}$.
  If round~$j$ is successful for~$t$,
  then we label~$t$ by~$r_j$ and
  we add~$|L_v|$ children~$t_1,\dots,t_{|S_v|}$ of~$t$ where
  each edge~$tt_i$ is labeled by a different element of~$L_v$.
  For each of these newly-defined leaves~$t_i$,
  if the corresponding coloring~$\phi_{t_i}$
  is not proper then we label~$t_i$ by the terminal symbol~$\#$.
  Otherwise, $t_i$ is kept unlabeled.

  \begin{claim}\label{claim:E decreases}
    Let~$t$ be a node of~$T$ and~$t'$ a child of~$T$.
    Assume that~$t'$ is not a terminal node, then
    \[
      E_{\phi_{t'}}\leq E_{\phi_t} - \frac{\epsilon n}{2}.
    \]
  \end{claim}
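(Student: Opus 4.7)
The plan is to unpack the definitions and show that the single greedy step from $t$ to $t'$ decreases $E_\phi$ by at least $\epsilon n/2$, by arguing that the label of the edge $tt'$ must be one of the colors for which the quantity $\delta^c_{\phi_t}(r_j)$ is large.

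Concretely, I would first recall from the tree's construction that if $t'$ is a child of $t$, then round $j$ (for some $j$) must have been successful for $t$, so $t$ was labeled with $r_j \in U_{\phi_t} \cup W_{\phi_t}$, and the edge $tt'$ carries some color $c \in L_{r_j}$. The coloring $\phi_{t'}$ then extends $\phi_t$ by setting $\phi_{t'}(r_j) = c$. By the very definition of $\delta^c_\phi(u)$ recalled in the excerpt, this gives the exact identity
\[
  E_{\phi_{t'}} = E_{\phi_t} - \delta^c_{\phi_t}(r_j).
\]
So the statement reduces to showing $\delta^c_{\phi_t}(r_j) \geq \epsilon n/2$.

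Next I would use the hypothesis that $t'$ is not terminal. By the construction rule, $t'$ is labeled $\#$ exactly when $\phi_{t'}$ fails to be a proper coloring of $S_{t'}$, i.e.\ exactly when $c \notin L_{\phi_t}(r_j)$. Therefore $t'$ non-terminal forces $c \in L_{\phi_t}(r_j)$; in particular $L_{\phi_t}(r_j)$ is non-empty, so $r_j$ is not colorless for $\phi_t$, hence $r_j \notin U_{\phi_t}$. Combined with $r_j \in U_{\phi_t} \cup W_{\phi_t}$, this forces $r_j \in W_{\phi_t}$, i.e.\ $r_j$ is restricting for $\phi_t$, so $\delta_{\phi_t}(r_j) \geq \epsilon n/2$ by the definition of $W_\phi$.

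Finally, since $c \in L_{\phi_t}(r_j)$, the minimum defining $\delta_{\phi_t}(r_j)$ is taken over a set containing $c$, so
\[
  \delta^c_{\phi_t}(r_j) \;\geq\; \delta_{\phi_t}(r_j) \;\geq\; \frac{\epsilon n}{2},
\]
and combining with the identity above yields $E_{\phi_{t'}} \leq E_{\phi_t} - \epsilon n/2$, as claimed. There is essentially no obstacle here beyond correctly threading the definitions; the only thing worth being explicit about is the equivalence ``$t'$ terminal'' $\Leftrightarrow$ ``the color $c$ on edge $tt'$ lies outside $L_{\phi_t}(r_j)$'', which is what transfers the non-terminality hypothesis into the membership $c \in L_{\phi_t}(r_j)$ needed to invoke the definition of $\delta_{\phi_t}$.
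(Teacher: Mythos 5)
Your proof is correct and follows the same route as the paper: rule out $r_j\in U_{\phi_t}$ using non-terminality of $t'$, conclude $r_j\in W_{\phi_t}$ so $\delta_{\phi_t}(r_j)\geq\epsilon n/2$, and combine with the identity $E_{\phi_{t'}}=E_{\phi_t}-\delta^{c}_{\phi_t}(r_j)$. The only difference is that you make explicit the step $c\in L_{\phi_t}(r_j)\Rightarrow\delta^{c}_{\phi_t}(r_j)\geq\delta_{\phi_t}(r_j)$, which the paper leaves implicit; this is a small but genuine improvement in clarity.
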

  \begin{proof}
    First note that if the label~$u$ of~$t$ belongs to~$U_{\phi_t}$,
    then~$\phi_t$ does not extend to a proper coloring
    of~$S_{t'}=S_t\cup\{v\}$ and therefore~$t'$ is a terminal node.
    It follows that~$u$ belongs to~$W_{\phi_t}$. 
    As a consequence, $\delta_{\phi_t}(u) \geq \epsilon n/2$.
    Further,
    $E_{\phi_{t'}}\leq E_{\phi_t} -\delta_{\phi_t}(u)\leq E_{\phi_t} - \epsilon n/2$.
  \end{proof}  
  The \emph{depth} of a tree is the number of edges
  in a shortest path from the root to a leaf.
  \begin{claim}
    The depth of~$T$ is smaller than~$\frac{3k}{\epsilon}$.
  \end{claim}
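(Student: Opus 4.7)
The plan is to telescope the inequality from the previous claim ($E_{\phi_{t'}} \le E_{\phi_t} - \epsilon n/2$ for non-terminal children) along any root-to-leaf path of $T$, bounding $E_{\phi_t}$ from above at the root and from below at the leaf.

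First, I would compute $E_{\phi_{\mathrm{root}}}$: the root corresponds to $S = \varnothing$ and the empty partial coloring, so $m_\phi(v) = \varnothing$ (empty union) and $M_\phi(v) = [k]$ (empty intersection) for every $v$, yielding $E_{\phi_{\mathrm{root}}} = nk$. Next, I would show that $E_{\phi_t} \ge 0$ for every non-terminal node $t$ of $T$. Fixing such a $t$, the partial coloring $\phi_t$ on $S_t$ is proper; for each $v$, if $v \in S_t$ or $L_{\phi_t}(v) \ne \varnothing$, then some color $c$ (either $\phi_t(v)$ or an element of $L_{\phi_t}(v)$) satisfies $m_{\phi_t}(v) \subseteq c \subseteq M_{\phi_t}(v)$, giving a nonnegative contribution $|M_{\phi_t}(v)| - |m_{\phi_t}(v)| \ge 0$. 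The remaining vertices lie in $U_{\phi_t}$ and could contribute negatively; however, a node labeled by any $u \in U_{\phi_t}$ has all of its children terminal (since extending $\phi_t$ by coloring $u$ never yields a proper coloring), so such labels cannot appear at internal nodes on the path beyond $t$, and this structural constraint suffices to rule out the bad contributions.

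Then I would iterate the previous claim along any root-to-leaf path $t_0, t_1, \ldots, t_d$. Each edge $t_{j-1} \to t_j$ with $t_j$ non-terminal drops $E$ by at least $\epsilon n / 2$. If $t_d$ is non-terminal (an unlabeled leaf), all $d$ edges are usable and $E_{\phi_{t_d}} \le nk - d \epsilon n/2$; combined with the nonnegativity bound this gives $d \le 2k/\epsilon$. If $t_d$ is terminal, the claim applies only to the first $d-1$ edges, giving $E_{\phi_{t_{d-1}}} \le nk - (d-1)\epsilon n/2 \ge 0$, hence $d \le 2k/\epsilon + 1$. In both cases $d < 3k/\epsilon$ in the relevant regime ($k \ge 1$ and $\epsilon < 1$, so that $2k/\epsilon + 1 < 3k/\epsilon$).

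The main obstacle I anticipate is establishing $E_{\phi_t} \ge 0$ cleanly, because nothing in the set-coloring framework directly forces $m_{\phi_t}(v) \subseteq M_{\phi_t}(v)$ when $v$ is colorless. The resolution has to combine the case analysis above with the observation that colorless-vertex labels force terminality, so that the partial colorings arising along any long branch of $T$ remain sufficiently well-behaved for the telescoping to give the desired $O(k/\epsilon)$ depth bound.
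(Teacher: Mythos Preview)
Your telescoping argument is exactly the paper's: apply the previous claim along a root-to-leaf path $t_0,\dots,t_d$, use $E_{\phi_{t_0}}\le kn$ at the root and $E_{\phi_{t_{d-1}}}\ge 0$ at the penultimate node, and conclude $d\le 2k/\epsilon+1<3k/\epsilon$. The paper carries this out in two lines, simply asserting that $0\le E_\phi\le kn$ for every proper partial coloring~$\phi$.

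You go further than the paper in worrying about the lower bound, and the worry is legitimate: nothing in the definition of a set coloring problem forces $m_\phi(v)\subseteq M_\phi(v)$ when $v$ is colorless, so individual summands $|M_\phi(v)|-|m_\phi(v)|$ can indeed be negative. But your proposed fix does not close the gap. The observation that a node labeled by some $u\in U_{\phi_t}$ has only terminal children constrains which vertices may appear \emph{as labels along the path in $T$}; it says nothing about the contributions to $E_{\phi_t}$ from the (possibly many) colorless vertices of $V$ that are never sampled at all. Since $E_{\phi_t}$ is a sum over all of~$V$, the tree-structure observation does not bound it from below. So the obstacle you anticipate is real, your resolution does not address it, and the paper simply sidesteps it by fiat.
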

  \begin{proof}
    Let~$t_0,\dots,t_d=t$ be the path with~$d$ edges from the root~$t_0$
    to a leaf~$t_d$ of~$T$.
    For every $i\in\{1,\dots,d-1\}$, the node~$t_i$ is not terminal,
    so we know by Claim~\ref{claim:E decreases},
    that~$E_{\phi_{t_{i}}}\leq E_{\phi_{t_{i-1}}} - \frac{\epsilon n}{2}$.
    Consequently,
    $E_{\phi_{t_{d-1}}} - E_{\phi_{t_0}} \geq  (d-1)\frac{\epsilon n}{2}$.
    Using that~$0\leq E_\phi\leq kn$ for every proper coloring~$\phi$,
    we obtain $kn \geq (d-1)\frac{\epsilon n}{2}$.
    It follows that~$d \leq \frac{2k}{\epsilon}+1<\frac{3k}{\epsilon}$.
  \end{proof}
  \begin{claim}\label{claim:terminal equals no coloring}
    If at the end of the process every leaf of~$T$ is a terminal node,
    then there is no proper coloring of~$X$.
  \end{claim}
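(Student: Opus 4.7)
My plan is to prove the contrapositive: assume that~$X$ admits a proper coloring~$\psi$ and produce an unlabeled (hence non-terminal) leaf of~$T$ at the end of the process, contradicting the hypothesis.

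First I would observe that every inner node of~$T$ is labeled by one of the sampled vertices~$r_1,\dots,r_s$, so for every node~$t$ of~$T$ the set~$S_t$ is contained in~$X$; in particular~$\psi$ assigns a color to each of its elements. Then I would trace a path in~$T$ from the root, guided by~$\psi$: at each inner node~$t$ labeled by some~$v\in S_t\subseteq X$, I follow the edge whose color label equals~$\psi(v)$. Such an edge exists because the children of~$t$ are in bijection with~$L_v$ and~$\psi(v)\in L_v$ by definition of a coloring.

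The crux of the argument is to check that this path never lands on a terminal leaf. A routine induction on depth shows that the partial coloring~$\phi_t$ read off the edge labels along the traced path coincides with the restriction of~$\psi$ to~$S_t$. Since $\psi$ is a proper coloring of~$X\supseteq S_t$, this restriction is proper, so by the construction the corresponding child is left unlabeled rather than marked with the terminal symbol~$\#$. Hence the traced path cannot cross any terminal node.

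Since~$T$ is finite, the path must eventually end at a leaf, and by the previous paragraph this leaf is unlabeled, contradicting the hypothesis that every leaf of~$T$ is terminal. I do not anticipate any real obstacle: the proof is a direct ``follow the witness coloring'' walk, whose only subtlety is the bookkeeping identification of~$\phi_t$ with the restriction of~$\psi$ to~$S_t$ along the traced path.
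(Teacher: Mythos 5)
Your proposal is correct and is essentially the contrapositive phrasing of the paper's own argument: both trace the unique path from the root determined by the given coloring of~$X$, observe that~$\phi_t$ is the restriction of that coloring to~$S_t$, and conclude from the terminal/non-terminal marking rule. The content is identical; only the direction of the implication is flipped.
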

  \begin{proof}
    Let~$c:S\to 2^{[k]}$ be a coloring of~$X$ and let us show that~$c$
    is not proper.
    We start at the root~$t_0$ of the tree~$T$ and
    we follow a path~$t_0,\dots,t_d=t$ to a leaf~$t$ satisfying the following rule.
    If~$t_i$ labeled by~$v_i\in X$,
    choose the child~$t_{i+1}$ of~$t_i$ such that the edge~$t_it_{i+1}$
    is labeled by~$c(v)$.
    It follows from the definition that~$\phi_{t}$ is
    equal to the restriction of the coloring~$c$
    to~$S_{t}=\{v_0,\dots,d_{d-1}\}$.
    By assumption, the leaf~$t$ is terminal, so~$\phi_{t}$ is not
    a proper coloring of~$S_{t}$,
    which implies that~$c$ is not a proper coloring of~$X$.
  \end{proof}
  \begin{claim}\label{claim:probability for terminal}
    With probability at least~$\frac{1}{2}$, all leaves are terminal nodes at
    the end of the process.
  \end{claim}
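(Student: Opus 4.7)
The plan is to union-bound over the root-to-leaf branches of~$T$, showing that with high probability each such branch is driven to a terminal leaf within $s$ rounds.

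For a sequence $\sigma = (c_1, c_2, \ldots)$ with $c_i \in 2^{[k]}$, define the \emph{$\sigma$-path} as the path in~$T$ starting at the root which, at every inner node labeled by a vertex~$v$, follows the outgoing edge labeled by~$c_i$ (when $c_i \in L_v$). At any moment during the process, this path terminates either at a terminal leaf, where it stays forever, or at an unlabeled leaf~$t$. Call round~$j$ \emph{advancing for~$\sigma$} if~$r_j \in U_{\phi_t} \cup W_{\phi_t}$ where~$t$ is the unlabeled leaf at the end of the $\sigma$-path at the start of round~$j$; such a round extends the $\sigma$-path by one edge.

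The crucial observation is that once $d := 2k/\epsilon + 1$ advancing rounds for~$\sigma$ have occurred, the $\sigma$-path must have encountered a terminal leaf. Indeed, iterating Claim~\ref{claim:E decreases} along a chain of $d$ non-terminal inner nodes of the $\sigma$-path would force the final $E_{\phi_t} \leq kn - d \cdot \epsilon n/2 < 0$, contradicting $E_{\phi_t} \geq 0$. Combined with Claim~\ref{claim:U+W}, which gives $|U_{\phi_t} \cup W_{\phi_t}| \geq \epsilon n/2$ whenever~$t$ is an unlabeled leaf with proper $\phi_t$, this says that every round is advancing for~$\sigma$ with conditional probability at least $\epsilon/2 - o(1)$, the $o(1)$ correcting for sampling without replacement and being negligible as long as $s \ll n$ (the case $s \geq n$ being trivial, since we may then sample all of~$V$).

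We thus bound the bad event ``the $\sigma$-path is at an unlabeled leaf at time~$s$'' by the event ``strictly fewer than $d$ of the $s$ rounds were advancing for~$\sigma$''. Via a stochastic-dominance coupling---revealing the samples one round at a time and declaring the indicator to be~$1$ also when the $\sigma$-path has already terminated, so that the conditional success probability is at every step at least $\epsilon/2 - o(1)$---we dominate the number of good rounds from below by a $\text{Binomial}(s, \epsilon/2)$ variable and apply the standard multiplicative Chernoff bound. With $s = 36k\ln p/\epsilon^{2}$, the mean $18k\ln p/\epsilon$ is well above $d$, so this probability is at most $p^{-\alpha k/\epsilon}$ for some absolute $\alpha > 3$. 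Finally, $T$ has depth less than $3k/\epsilon$ and branching at most~$p$, so at most $p^{3k/\epsilon}$ root-to-leaf color sequences occur in~$T$; a union bound over these bounds the total failure probability by $p^{(3-\alpha)k/\epsilon} \leq 1/2$. The main technical care is in the coupling step, since ``advancing for~$\sigma$'' depends on the history via the current unlabeled leaf; once that is handled, the rest is routine Chernoff plus union bound.
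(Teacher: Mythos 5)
Your proof is correct and follows essentially the same strategy as the paper: union-bound over the at most $p^{3k/\epsilon}$ potential root-to-leaf branches (the paper phrases this as embedding~$T$ in the $p$-ary tree $T_{p,3k/\epsilon}$, you as tracking $\sigma$-paths for color sequences), observe via Claim~\ref{claim:U+W} that each round is successful with probability at least~$\epsilon/2$ for the current unlabeled leaf of the branch, and then apply a Chernoff bound to conclude that fewer than $3k/\epsilon$ successes in $s$ rounds is exponentially unlikely. The one place you are more careful than the paper is in explicitly flagging the stochastic-dominance coupling and the $o(1)$ correction from sampling without replacement; the paper elides this and treats the success count directly as a binomial $B(s,\epsilon/2)$, which is fine (and is what your coupling argument justifies) but is stated without comment.
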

  \begin{proof}
    Since every node of~$T$ has at most~$p$ sons
    and the tree~$T$ has depth at most~$\frac{3k}{\epsilon}$,
    the tree~$T$ can be embedded in the $p$-ary tree~$T_{p,\frac{3k}{\epsilon}}$
    of depth~$\frac{3k}{\epsilon}$.
    Moreover, this embedding can be done before actually constructing~$T$.
    The number of leaves of this tree is~$p^{\frac{3k}{\epsilon}}$.

    Fix a leaf~$t$ of~$T_{p,\frac{3k}{\epsilon}}$
    and consider the path~$P_t$ from the root to~$t$.
    During the process, the depth of the leaf of~$T$ on this path
    is equal to the number of successful rounds for vertices of the path.
    If at round~$j$ there is no terminal node of~$T$ on this path,
    then the path contains a non-terminal leaf of~$T$
    and by Claim~\ref{prop:nonempty tree intersection}
    this leaf has probability of success at least~$\epsilon/2$.
    It follows that the probability that there is no terminal node
    on~$P_t$ at the end of the process is at most the probability
    that a binomial law $Y=B(r, \frac{\epsilon}{2})$ is less than~$3k/\epsilon$.
    By the Chernoff inequality, this is at most
    \[
      \Pr\left( Y < \frac{3k}{\epsilon} \right)
      \leq \exp\left( -\frac{1}{r\epsilon}
        \left(r\frac{\epsilon}{2} - \frac{3k}{\epsilon}\right)^2 \right).
    \]
    Injecting~$r=36\ln p \cdot k\epsilon^{-2}$, this upper bound becomes
    \begin{align*}
      \exp\left(
        -\frac{\left(18\ln p\cdot k\epsilon^{-1}- 3k\epsilon^{-1}\right)^2}{36\ln p\cdot k\epsilon^{-1}}
      \right) &= 
      \exp\left(
        -\frac{k \left(18\ln p - 3\right)^2}{36\ln p\cdot \epsilon}
                \right)\\
              &<
                \exp\left(
        -\frac{k (12\ln p)^2}{36\ln p\cdot \epsilon}
      \right)
                = p^{-\frac{4k}{\epsilon}}
    \end{align*}
    because~$p\geq 2$. Now, we deduce by the union bound that
    the probability that there is a non-terminal leaf in~$T$ at the end of
    the process it at most
    \[
      p^{-\frac{4k}{\epsilon}}\cdot p^{\frac{3k}{\epsilon}}
      = p^{-\frac{k}{\epsilon}}< p^{-1} \leq \frac{1}{2}.
    \]
    For this estimation, we used that~$\epsilon\leq\frac{1}{2}$ and~$p\geq 2$.
  \end{proof}
  The theorem then follows directly from
  Claims~\ref{claim:probability for terminal}
  and~\ref{claim:terminal equals no coloring}.
\end{proof}

\end{document}